\documentclass[10pt,a4paper]{article}
\usepackage[utf8]{inputenc}
\usepackage{amsmath}
\usepackage{amsfonts}
\usepackage{color}
\usepackage{amssymb}
\usepackage{mathrsfs}
\usepackage[cal=boondoxo]{mathalfa}
\usepackage{esint}
\usepackage[colorinlistoftodos]{todonotes}
\usepackage[colorlinks=true]{hyperref}
\hypersetup{
	colorlinks=true,%
	citecolor=red,%
	filecolor=black,%
	linkcolor=blue,%
	urlcolor=blue
}
\usepackage[margin=1.3in]{geometry}

\usepackage[amsmath,thmmarks,hyperref]{ntheorem}
{
	\theoremstyle{nonumberplain}
	\theoremheaderfont{\bfseries}
	\theorembodyfont{\normalfont}
	\theoremsymbol{\mbox{$\Box$}}
	\newtheorem{pf}{Proof.}
}

\numberwithin{equation}{section}
\def\R{\mathbb{R}}
\def\B{\mathbb{B}}

\def\S{\mathbb{S}}
\def\loc{\mathrm{loc}}
\def\supp{\mathrm{supp}}
\def\N{\mathbb{N}}

\def\e{\epsilon}
\newcommand{\ud}{\mathrm{d}}

\newtheorem{thm}{Theorem}[section]

\newtheorem{lem}{Lemma}[section]
\newtheorem{rem}{Remark}[section]

\newtheorem{pro}{\indent Proposition}[section]
\newtheorem{question}{Question.}[section]

\newtheorem*{thm A}{Theorem A}
\newtheorem*{thm B}{Theorem B}
\newtheorem*{thm C}{Theorem C}
\newtheorem{cor}{Corollary}[section]
\usepackage{upgreek}
\usepackage{graphicx}

\makeatletter

\newdimen\bibspace
\renewenvironment{thebibliography}[1]{%
	\section*{\refname 
		\@mkboth{\MakeUppercase\refname}{\MakeUppercase\refname}}%
	\list{\@biblabel{\@arabic\c@enumiv}}%
	{\settowidth\labelwidth{\@biblabel{#1}}%
		\leftmargin\labelwidth
		\advance\leftmargin\labelsep
		\itemsep\bibspace
		\parsep\z@skip     %
		\@openbib@code
		\usecounter{enumiv}%
		\let\p@enumiv\@empty
		\renewcommand\theenumiv{\@arabic\c@enumiv}}%
	\sloppy\clubpenalty4000\widowpenalty4000%
	\sfcode`\.\@m}
{\def\@noitemerr
	{\@latex@warning{Empty `thebibliography' environment}}%
	\endlist}

\makeatother

\makeatletter

\allowdisplaybreaks

\allowdisplaybreaks

\allowdisplaybreaks
\title{article}
\begin{document}
	\title{\bf 
Sharp weighted Carleman and Huber isoperimetric inequalities on the unit ball in higher dimensions} 
	\date{\today}
	\author{\medskip Zhijie Chen, Changfeng Gui, Shihong Zhang \\
		}
	
	\renewcommand{\thefootnote}{\fnsymbol{footnote}}
	\maketitle	
	
	{\noindent\small{\bf Abstract: In this paper, using a limiting approach, we establish a new type of weighted  Carleman inequality in all dimensions $n\geq 2$ and classify all extremal functions. In particular, when $n=2$, we prove that our inequality is equivalent to a sharp norm inequality in the Bergman space. In even dimensions, we further establish a sharp weighted Huber isoperimetric inequality on the unit ball, which generalizes Huber's original result \cite[Ann. Math., 1954]{Huber} and may be regarded as a sharp counterpart of Y. Wang's isoperimetric inequality in the unit ball \cite[Adv. Math., 2015]{Wang}.
    }

		\medskip 
		
		{{\bf $\mathbf{2020}$ MSC:} 35A23, 35B06, 53C21, 31B10}
		
		\medskip 
		{\small{\bf Keywords:}
			weighted  Carleman inequality, Huber inequality, Moving sphere approach, Q curvature, conformally invariant boundary operators.}
		
		
		\section{Introduction}		
			Let $\B^2$ be the unit ball in $\R^2$. The classical  Carleman inequality \cite{Carleman} states that, for any smooth subharmonic function $F$ on $\B^2$ with boundary value $F|_{\S^1}=f$, one has the isoperimetric inequality
		\begin{align}\label{Intro Caleman}
			\frac{1}{\pi}\int_{\B^2} e^{2F}
			\leq
			\left(\int_{\S^1} e^f \,\ud\mu\right)^2.
		\end{align}
		Here $\ud\mu$ denotes the normalized measure on $\S^1$, that is,
		$
		\int_{\S^1}\ud\mu=1.
		$
		Moreover, the equality holds if and only if $F$ is harmonic and either $f=c$ or
		$
		f(\eta)=\log\frac{1-|a|^2}{|\eta-a|^2}+c,
		$
		where $c\in\R$ and $a\in\B^2$. By the Riemann mapping theorem and the conformal invariance of \eqref{Intro Caleman}, the inequality \eqref{Intro Caleman} is in fact valid for any simply connected domain in $\R^2$.
		
		A natural question is whether a similar isoperimetric inequality still holds for functions that are not subharmonic. This leads to another more delicate isoperimetric inequality, the Huber inequality \cite{Huber}, which states that if
		$
		F\in C^2(\B^2)\cap C(\overline{\B^2})
		$
		and
		$
		\int_{\B^2}(-\Delta F(\xi))^{+}\,\ud \xi<2\pi,
		$
		then one has the following sharp isoperimetric inequality:
		\begin{align}\label{Intro Huber}    \int_{\B^2}e^{2F(\xi)}\ud \xi\leq \frac{2\pi^2}{2\pi-\int_{\B^2}(-\Delta F(\xi))^{+}\ud \xi}\left(\int_{\S^1}e^f\ud \mu\right)^2.
		\end{align}
		Moreover, if $F$ can be written as the difference of two subharmonic functions, then the equality in \eqref{Intro Huber} can be attained, and it holds if and only if $F$ takes the form \eqref{Intro Equ-Hu}. The importance of Huber's inequality may be viewed from two perspectives: analysis and geometry.
		
		The first is its significance in analysis and PDE. One of the main reasons why Huber's inequality is so useful is that it yields the (possibly singular) Bol inequality; for recent progress in the whole-space setting, we refer to \cite{Li&Wei}. The Bol inequality plays a fundamental role in the study of mean field type equations, since it provides a powerful comparison principle between the interior mass and the boundary length of level sets. In particular, it has become an important tool in establishing uniqueness, symmetry, and a priori estimates for  mean field equations; see, for instance, the works of Lin et al.~\cite{Lin-2,Lin-3,Lin-1}. Especially in recent years, the second author and Moradifam \cite{Gui Moradifam} developed the sphere covering method, which has led to substantial progress in the study of mean field equations. Their method relies in an essential way on Bol-type inequalities and related isoperimetric arguments. For the singular case, we also refer to \cite{Gui Moradifam-2}. These developments further illustrate that Huber's inequality is not only a delicate refinement of Carleman's inequality, but also a basic tool in the modern analysis of nonlinear elliptic equations with exponential nonlinearities.
		
		The second motivation comes from differential geometry. Fiala \cite{Fiala} and Huber \cite{Huber-2} generalized inequalities of weighted Huber from planar domains to simply connected surfaces with variable Gaussian curvature. A natural question in conformal geometry is whether there exists a higher-dimensional analogue. Recently, Y. Wang \cite{Wang} introduced Branson's $Q$-curvature as a higher-dimensional substitute for the Gaussian curvature and established a higher-order Fiala--weighted Huber isoperimetric inequality under integral curvature assumptions, although without a sharp isoperimetric constant. More precisely, she proved that 
		
		\begin{thm A*}[\protect{Y. Wang \cite[Theorem 1.1]{Wang}}]    Let $(\R^n, e^{2u}|\ud x|^2)$ be an even-dimensional complete manifold with the $Q$-curvature $Q_g\in L^1(\R^n, \ud V_g)$. If $(\R^n, e^{2u}|\ud x|^2)$ is  normal, i.e.,
			\begin{align*}
				u(x)=\frac{2}{(n-1)!|\S^n|}\int_{\R^n}\log\frac{|y|}{|x-y|}Q_g(y)e^{nu(y)}\ud y+C,
			\end{align*}
			and if 
			\begin{align*}
				\int_{\R^n}Q_g^{+}\ud V_g<k_n:=\frac{(n-1)!|\S^n|}{2},
			\end{align*}
			where $Q_g^{+}$ denotes the positive part of $Q_g$. Then there exists $C(n, \|Q_g\|_{\R^n,L^1(\ud V_g)})$ such that for any bounded smooth domain $\Omega$ in $\R^n$,
			\begin{align}\label{Wang Isoperi}
				|\Omega|_g\leq C(n, \|Q_g\|_{\R^n,L^1(\ud V_g)})|\partial \Omega|_g^{\frac{n}{n-1}}.
			\end{align}
		\end{thm A*}
		
		Under the same assumptions, Y.~Sire and Y.~Wang \cite{Wang&Yannick,Wang&Yannick-2} established several fractional Poincar\'e inequalities on conformally flat manifolds. Very recently, the third author proposed in \cite{S.Zhang 1} a sharp assumption involving only the $Q$-curvature, which replaces the normality condition.
		
		However, in higher dimensions, results of this type still appear to be extremely scarce in the existing literature. While the classical  Carleman inequality and its conformally invariant higher-dimensional extensions have been extensively studied, there seem to be very few works addressing genuinely weighted Huber inequalities beyond the two-dimensional setting, especially in a sharp form and with a precise characterization of the extremals. Very recently, the second author posed the following question: whether there exist isoperimetric inequalities in higher dimensions under suitable curvature conditions.
		
		In this paper, we aim to give a partial answer to this question in the conformally invariant setting of the unit ball. More precisely, starting from the sharp polynomial inequalities of type \eqref{Intro HLS}, we derive a new family of sharp exponential inequalities by a limiting procedure, and then show that these inequalities still enjoy a nontrivial conformal invariance despite the presence of the complicated weight $\ud \nu$. Compared with the existing literature, one distinctive feature of our result is that the limiting inequality carries a highly nontrivial weight, which disappears only in special cases and seems not to have appeared previously when $n$ is odd or $\sigma\neq 1$. Another important novelty is that we are able to classify all extremal functions in $L^{\infty}(\S^{n-1})$, which appears to be unavailable even in some earlier special cases. In particular, our results provide a rather rare class of sharp higher-dimensional weighted Huber/Carleman-type inequalities together with conformal invariance, explicit extremals, and new connections to complex analysis in dimension two and adapted metrics in conformal geometry for even dimensions.
		
		\subsection{High dimensional weighted  Carleman inequality}
		In higher dimensions, Hang--Wang--Yan \cite{Hang&Wang&Yan} generalized Carleman's inequality by using the harmonic extension operator
		\begin{align*}
			P_0(f)(\xi)
			=
			\int_{\S^{n-1}}
			\frac{1-|\xi|^2}{|\xi-\eta|^{n}}f(\eta)
			\,\ud\mu(\eta),
		\end{align*}
		where $\ud\mu$ denotes the normalized measure on $\S^{n-1}$, that is,
		$
		\int_{\S^{n-1}}\ud\mu=1.
		$
		Given a positive continuous function $f$, the function $P_0(f)$ is the harmonic extension of $f$, satisfying
		$
		P_0(f)\big|_{\partial\B^n}=f.
		$
		Consider the conformal metric
		$
		g=(P_0(f))^{\frac{4}{n-2}}g_{\R^n}.
		$
		Then
		\begin{align*}
			\mathrm{Vol}_g(\B^n)
			=
			\int_{\B^n}(P_0(f))^{\frac{2n}{n-2}}\,\ud \xi,
			\qquad
			\mathrm{Area}_g(\S^{n-1})
			=
			\int_{\S^{n-1}}f^{\frac{2(n-1)}{n-2}}\,\ud\mu.
		\end{align*}
		 Hang--Wang--Yan \cite{Hang&Wang&Yan} proved the following beautiful isoperimetric inequality: for any
		$f\in L^{\frac{2(n-1)}{n-2}}(\S^{n-1})$,
		\begin{align*}
			\|P_0(f)\|_{L^{\frac{2n}{n-2}}(\B^n)}
			\le
			n^{-\frac{n-2}{2(n-1)}}
			|\B^n|^{-\frac{n-2}{2n(n-1)}}
			\|f\|_{L^{\frac{2(n-1)}{n-2}}(\S^{n-1})}.
		\end{align*}
		Afterwards, they also studied isoperimetric inequalities on general Riemannian manifolds with boundary  in \cite{Hang&Wang&Yan-2}.
		
		In fact, there is a more general class of conformally invariant geometric inequalities that extends the Hang--Wang--Yan inequality. To introduce it, we set
		\begin{align*}
			p=p(\mathbf{a},\mathbf{b})=\frac{2(n-1)}{n+\mathbf{a}-2},
			\qquad
			q=q(\mathbf{a},\mathbf{b})=\frac{2n}{n-\mathbf{a}-2\mathbf{b}},
		\end{align*}
		where $\mathbf{a},\mathbf{b}$ satisfy the constraints
		\begin{align}\label{Index Condi}
			\mathbf{b} \geq 0,
			\qquad
			0<\mathbf{a}+\mathbf{b}<n-\mathbf{b},
			\qquad\text{and}\qquad
			\frac{n-\mathbf{a}-2\mathbf{b}}{2n}+\frac{n-\mathbf{a}}{2(n-1)}<1.
		\end{align}
		Given $f\in L^p(\S^{n-1})$, we consider the integral operator $Q_{\mathbf{a},\mathbf{b}}$ defined by
		\begin{align}\label{Def Q}
		    Q_{\mathbf{a},\mathbf{b}}(f)(\xi)
		=
		\int_{\S^{n-1}} H(\xi,\eta)\,f(\eta)\,\ud\mu(\eta),
		\end{align}
		where the kernel $H(\xi,\eta)$ is given by
		\begin{align*}
			H(\xi,\eta)
			=
			\left(\frac{1-|\xi|^2}{2}\right)^{\mathbf{b}}
			|\xi-\eta|^{\mathbf{a}-n}.
		\end{align*}

		\begin{thm B*}\protect{\cite[Theorem 1.1 and Theorem 1.3]{Gluck}}\label{thm B}
			Let $n\geq 2$ and  suppose $(\mathbf{a},\mathbf{b})$ satisfy \eqref{Index Condi}. Then 
			\begin{align}\label{Intro HLS}
				\|Q_{\mathbf{a},\mathbf{b}}(f)\|_{L^{q}(\B^{n})}\leq c_{\mathbf{a},\mathbf{b}}\|f\|_{L^{p}(\S^{n-1},\ud \mu) },
			\end{align}
			where the sharp constant $c_{\mathbf{a},\mathbf{b}}$  takes the form 
			\begin{align}\label{Intro const}
				c_{\mathbf{a},\mathbf{b}}
				=
				\left\|
				\int_{\S^{n-1}}H(\cdot,\eta)\,
				\ud \mu(\eta)
				\right\|_{L^q(\B^n)}.
			\end{align}
			Moreover, the equality holds if and only if, up to a multiplicative constant,
			$f=\bigl(\det \ud\Psi\big|_{\partial \B^{n}} \bigr)^{\frac{1}{p}}$,
			where $\Psi$ is a conformal transformation from $\B^n$ to $\B^n$.

		\end{thm B*}
		
		A number of important special cases of \eqref{Intro HLS} had been established before the full theorem became available. When $(\mathbf{a},\mathbf{b})=(0,1)$, one recovers the isoperimetric inequality of Hang--Wang--Yan \cite{Hang&Wang&Yan}. The critical line $\mathbf{a}+\mathbf{b}=1$, with $\mathbf{a}\in(2-n,1)$, was treated by S.~Chen \cite{Chen Shibing}. Later, Dou and Zhu \cite{Dou&Zhu} handled the case $\mathbf{b}=0$ with $\mathbf{a}\in(1,n)$. For the range $\mathbf{b}=1$ and $\mathbf{a}\in[2,n)$, Dou, Guo, and Zhu \cite{Dou&Guo&Zhu} introduced a subcritical method and proved the corresponding sharp inequality. The unresolved parameter range was subsequently completed by Gluck \cite{Gluck}, who extended the subcritical strategy to cover the full generality of \eqref{Intro HLS}. More recently, the third author \cite{Yang&Zhang} obtained sharp quantitative versions of \eqref{Intro HLS} through a delicate analysis of the underlying hypergeometric functions.
		
		An interesting observation of S.~Chen \cite{Chen Shibing} is that, by letting $\mathbf{a}\to 2-n$, one obtains an exponential isoperimetric-type inequality on the unit ball $\B^n$, see also \cite{Tian}: for any $f\in L^{\infty}(\S^{n-1})$, there holds
		\begin{align*}
			\|e^{\tilde{I}_n+P(f)}\|_{L^n(\B^n)}
			\leq
			\|e^{\tilde{I}_n}\|_{L^n(\B^n)}
			\|e^{f}\|_{L^{n-1}(\S^{n-1},\ud \mu)},
		\end{align*}
		where
		\begin{align*}
			\tilde{I}_n(|\xi|)
			=
			\frac{\ud }{\ud \mathbf{a}}\Big|_{\mathbf{a}=2-n}
			\frac{4\Gamma\left(\frac{n-\mathbf{a}}{2}\right)\sqrt{\pi}}
			{\Gamma\left(\frac{1-\mathbf{a}}{2}\right)\Gamma\left(\frac{n}{2}\right)}
			Q_{\mathbf{a},1-\mathbf{a}}(1)(\xi),
		\end{align*}
		where $Q_{\mathbf{a},1-\mathbf{a}}(1)(\xi)$ is given by \eqref{Intro equ-1} below and $\tilde{I}_n(1)=0$,
		and $P(f)$ denotes the hyperbolic harmonic extension
		\begin{align*}
		P(f)(\xi)
		=
		\int_{\S^{n-1}}
		\left(\frac{1-|\xi|^2}{|\xi-\eta|^2}\right)^{n-1}
		f(\eta)\,
		\ud \mu(\eta).
		\end{align*}
		More precisely, if we consider the standard hyperbolic metric
		$	g_{\B}
		=
		\left(\frac{2}{1-|\xi|^2}\right)^2|\ud \xi|^2,
		$
		then $P(f)$ satisfies
		$$
		\Delta_{\B}P(f)=0\quad\mathrm{in}\quad \B^n \qquad\mathrm{and}\qquad P(f)=f\quad\mathrm{on}\quad \S^{n-1}.
		$$
		\textit{
More importantly, the hyperbolic harmonic extension $P(f)$ can also be viewed as a polyharmonic extension with higher-order conformally invariant boundary data vanishing; see Subsection \ref{Sec 5.1}.
}

		S.~Chen's approach relies on the constraint $\mathbf{a}+\mathbf{b}=1$, which appears rather artificial in the general setting \eqref{Intro HLS}. A very natural question is therefore whether one can remove this constraint and take suitable limits in $\mathbf{a}$ and $\mathbf{b}$, here we choose a two-parameter path for which both exponents $p(\mathbf{a},\mathbf{b})$ and $q(\mathbf{a},\mathbf{b})$ tend to infinity, while the ratio $q/p$ converges to the prescribed exponent. We also emphasize that the resulting inequality is conformally invariant under the full conformal group of $\S^{n-1}$, although this invariance is far from obvious from its formulation.

		\begin{thm}\label{Thm 1}
		For any $f\in L^{1}(\S^{n-1})$ with $n\geq 2$ and $\sigma\in\left(\frac{1}{2},\frac{2n-1}{2(n-1)}\right)$, we have 
		\begin{align}\label{Intro Inequ}
			\|e^{P(f)}\|_{L^{\frac{n}{2\sigma-1}}(\B^n,\ud \nu)}	\leq \|e^{f}\|_{L^{n-1}(\S^{n-1},\ud \mu)},
		\end{align}
		where 
		\begin{align*}
			\ud \nu=\frac{(1-|\xi|^2)^{\frac{2n(1-\sigma)}{2\sigma-1}}e^{\frac{2n}{2\sigma-1}I(|\xi|^2)}\ud \xi}{\int_{\B^n}(1-|\xi|^2)^{\frac{2n(1-\sigma)}{2\sigma-1}}e^{\frac{2n}{2\sigma-1}I(|\xi|^2)}\ud \xi },\qquad  I(x)= \sum_{k=1}^{+\infty}\frac{\left(1-\frac{n}{2}\right)_k}{\left(\frac{n}{2}\right)_k}\frac{x^{k}}{2k}.
		\end{align*}
		Moreover, if $f\in L^{\infty}(\S^{n-1})$, then the equality holds if and only if $f=c$ or $f(\eta)=\log\frac{1-|a|^2}{|\eta-a|^2 }+c$ for some $a\in \B^n$ and $c\in \R$.
		\end{thm}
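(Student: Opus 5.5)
We would derive \eqref{Intro Inequ} from Theorem B by a limiting procedure. Choose a one-parameter path $(\mathbf{a}_t,\mathbf{b}_t)$ inside the admissible region \eqref{Index Condi}, with $\mathbf{a}_t\to 2-n$ and $\mathbf{b}_t\to n-1$ as $t\to0$, so that $p=\frac{2(n-1)}{n+\mathbf{a}-2}\to\infty$ and $q=\frac{2n}{n-\mathbf{a}-2\mathbf{b}}\to\infty$. Concretely we take $n+\mathbf{a}_t-2=2(n-1)t$, so $p=1/t$, and choose $\mathbf{b}_t$ so that $q_t t\to \frac{n}{(n-1)(2\sigma-1)}$. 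Then $\|e^{P(f)}\|_{L^{n/(2\sigma-1)}}$ appears after raising to the power $(n-1)$. We apply \eqref{Intro HLS} to $f_t=e^{tf}=1+tf+O(t^2)$. The constraint $\sigma\in(\frac12,\frac{2n-1}{2(n-1)})$ should be exactly what keeps the path inside \eqref{Index Condi}, in particular the last inequality, for small $t$; this is to be checked directly.

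The key computation is the expansion of the operator. We write
\[
Q_{\mathbf{a}_t,\mathbf{b}_t}(e^{tf})(\xi)=Q_{\mathbf{a}_t,\mathbf{b}_t}(1)(\xi)\,\frac{Q_{\mathbf{a}_t,\mathbf{b}_t}(e^{tf})(\xi)}{Q_{\mathbf{a}_t,\mathbf{b}_t}(1)(\xi)} .
\]
The normalized kernel $H(\xi,\eta)/Q(1)(\xi)$ converges to a probability kernel on $\S^{n-1}$. Since $|\xi-\eta|^{\mathbf{a}-n}\to|\xi-\eta|^{-2(n-1)}$ and the weight $(1-|\xi|^2)^{\mathbf{b}}\to(1-|\xi|^2)^{n-1}$, this limit kernel should be the hyperbolic Poisson kernel of $P$, whose normalization $P(1)=1$ is the fact that makes it a probability kernel. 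By Jensen, or simply by the first-order expansion, the ratio raised to the power $q_t$ then tends to $e^{c\,P(f)}$ with the right constant. The factor $Q(1)(\xi)^{q_t}$, once normalized by $c_{\mathbf{a},\mathbf{b}}^{q_t}$ from \eqref{Intro const}, must converge to the density of $\ud\nu$.

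Here $Q_{\mathbf{a},\mathbf{b}}(1)(\xi)=(\frac{1-|\xi|^2}{2})^{\mathbf{b}}\,{}_2F_1(\frac{n-\mathbf{a}}{2},\frac{2-\mathbf{a}}{2}\cdot\ldots;\frac n2;|\xi|^2)$, via the standard spherical-average formula for $\int_{\S^{n-1}}|\xi-\eta|^{-2s}\ud\mu$. When $\mathbf{a}=2-n$ this hypergeometric factor degenerates to $(1-|\xi|^2)^{-(n-1)}$. Its $\mathbf{a}$-derivative at that point produces the series $I(x)=\sum\frac{(1-\frac n2)_k}{(\frac n2)_k}\frac{x^k}{2k}$, exactly as in Chen's $\tilde I_n$. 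Thus $Q(1)^{q_t}\approx(1-|\xi|^2)^{q_t(\mathbf{b}_t-n+1)}\exp(q_t\,t\,\cdot\text{const}\cdot I(|\xi|^2))$. Since $q_t(\mathbf{b}_t-n+1)\to\frac{2n(1-\sigma)}{2\sigma-1}$ by our choice of path, the limit density is $(1-|\xi|^2)^{\frac{2n(1-\sigma)}{2\sigma-1}}e^{\frac{2n}{2\sigma-1}I}$, and dividing by $c^{q}$ gives the normalization. We would justify passing to the limit for bounded $f$ by dominated convergence, and then extend to $f\in L^1$ by truncation and monotone convergence, or Fatou.

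For the equality cases, the first step is that $f=\log\frac{1-|a|^2}{|\eta-a|^2}+c$ gives equality. This is the limit of the Theorem B extremals $(\det\ud\Psi)^{1/p}=e^{tf}$, together with conformal invariance, or equivalently a direct check. For the converse, which we expect to be the main obstacle since extremality is not preserved by limits, we would work with $f\in L^\infty$, which by compactness can be assumed to be a maximizer of the functional. We would derive the Euler–Lagrange equation $P^*\!\big(e^{\frac{n}{2\sigma-1}P(f)}\ud\nu\big)=\lambda e^{(n-1)f}$ on $\S^{n-1}$. We would then first establish the (nonobvious) conformal invariance of the functional under Möbius maps of $\B^n$, which requires an identity for $I$ under $\xi\mapsto\Psi(\xi)$. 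Finally we would classify bounded solutions of this integral equation by the method of moving spheres, showing that $e^{f}$ must be a conformal factor. This identifies $f$ as constant or as the $\log\frac{1-|a|^2}{|\eta-a|^2}$ profile.
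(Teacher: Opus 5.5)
Your plan is the paper's proof. You take the limit of Gluck's sharp inequality along a path with $p,q\to\infty$, then prove conformal invariance through an identity for $I$, derive the Euler--Lagrange equation, and classify by moving spheres. Your path with $p=1/t$ is the paper's path $\mathbf a=2-n+s$, $\mathbf b=n-1-\sigma s$ with $s=2(n-1)t$. The check of \eqref{Index Condi} that you defer does work out: $\mathbf a+2\mathbf b<n$ is equivalent to $\sigma>\frac12$, and the last condition is equivalent to $(n-1)(2\sigma-1)<n$.

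\textbf{Weight exponent.} The weight bookkeeping, as you wrote it, is inconsistent. The exponent of $(1-|\xi|^2)$ in $Q_{\mathbf a,\mathbf b}(1)$ is $\mathbf a+\mathbf b-1$ (Lemma \ref{Lem 2.1}), not $\mathbf b-(n-1)$. Indeed ${}_2F_1(\frac{n-\mathbf a}{2},\frac{2-\mathbf a}{2};\frac n2;x)=(1-x)^{\mathbf a-1}{}_2F_1(\frac{n+\mathbf a}{2}-1,\frac{\mathbf a}{2};\frac n2;x)$, so its logarithmic $\mathbf a$-derivative at $\mathbf a=2-n$ is $\log(1-x)+I(x)$, not $I(x)$ alone.

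Once you impose $q_tt\to m_\sigma$, you are forced to take $\mathbf b_t=n-1-2\sigma(n-1)t+o(t)$. Then $q_t(\mathbf b_t-n+1)\to-\frac{2n\sigma}{2\sigma-1}$, not $\frac{2n(1-\sigma)}{2\sigma-1}$, so your two requirements cannot both hold. The missing term is $q_t(\mathbf a_t+n-2)\log(1-|\xi|^2)\to\frac{2n}{2\sigma-1}\log(1-|\xi|^2)$. Including it gives $q_t(\mathbf a_t+\mathbf b_t-1)\to\frac{2n(1-\sigma)}{2\sigma-1}$, which is the correct density. This is exactly the factor the paper stresses as absent on Chen's line $\mathbf a+\mathbf b=1$.

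\textbf{Regularity for moving spheres.} The moving-sphere argument, as the paper runs it, needs $u\in C^1$. It is used to start the spheres via $\frac{d}{dr}(r^{n-1}e^u)>0$ near $x_0'$, where Hölder continuity would not suffice. It is used again for the gradient condition at the limiting touching point. The paper obtains this regularity by a Hölder bootstrap on the Euler--Lagrange equation in the Appendix. A bounded extremal is not $C^1$ a priori, so your sketch must either include this step or use a purely integral moving-sphere argument that avoids it.

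Finally, no compactness argument is needed: an $f$ attaining equality is automatically a maximizer, because the constant is sharp.
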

		\begin{rem}
		Due to the presence of the $L^{\frac{n}{2\sigma-1}}$ norm on the left-hand side, the condition $\sigma>\frac{1}{2}$ is completely natural. Moreover, the range of $\sigma$ is sharp in the following sense:
		\begin{align*}
			(1-|\xi|^2)^{\frac{2n(1-\sigma)}{2\sigma-1}}\in L^1(\B^n)
			\qquad\Longleftrightarrow\qquad
			\sigma<\frac{2n-1}{2(n-1)}.
		\end{align*}
		When $\sigma=1$, our result recovers the inequality of S.~Chen \cite{Chen Shibing}. Moreover, it seems that the extremal functions were not classified in \cite{Chen Shibing}; here we classify all extremal functions in $L^{\infty}(\S^{n-1})$.
		\end{rem}
		\begin{rem}
		In addition, one can verify that
		$
		\tilde{I}_n(|\xi|)=2I(|\xi|^2)-2I(1).
		$
		One advantage of this representation, compared with the formulation in S. Chen \cite{Chen Shibing}, is that it yields an explicit series expansion for $I$. More precisely, when $n$ is even, the series terminates and $I$ becomes a finite sum:
		\begin{align*}
			I(|\xi|^2)
			=
			\sum_{k=1}^{\frac{n}{2}-1}
			\frac{\left(1-\frac{n}{2}\right)_k}{\left(\frac{n}{2}\right)_k}
			\frac{|\xi|^{2k}}{2k}.
		\end{align*}
		In the special case $\sigma=1$ and $n=4$, we have
		$
		I(|\xi|^2)=-\frac{|\xi|^2}{4},
		$
		and hence
		$
		\Tilde{I}_4(\xi)=\frac{1-|\xi|^2}{2}.
		$
		This leads to a natural geometric interpretation through the so-called adapted metric
		\[
		g^{*}
		=
		e^{2\Tilde{I}_4(\xi)}|\ud \xi|^2
		=
		e^{1-|\xi|^2}|\ud \xi|^2.
		\]
		For more geometric information, see the elegant work of Ache and Chang \cite{Ache&Chang}. Roughly speaking, the adapted metric $g^*$ endows the compact manifold with boundary $(\B^4,\S^3)$ with several remarkable geometric properties:
		\begin{itemize}
			\item $\S^3$ is totally geodesic with respect to $g^*$;
			\item the $Q$-curvature of $g^*$ vanishes;
			\item the scalar curvature satisfies $R_{g^*}>0$.
		\end{itemize}
		Thus, in dimension four, the function $I$ is not merely an analytic quantity arising from a limiting procedure; it also admits a clear geometric interpretation through the adapted metric. In fact, for general even $n$ and $\sigma=1$, this explicit formula for the adapted metric already appeared in the work of Q.~Yang \cite{yang}. The weight in our inequality is exactly the conformal density that restores the conformal invariance of the left-hand side.
		
		However, when $n$ is odd or $\sigma\neq 1$, this more general type of weight does not seem to appear in the current literature, and, to the best of our knowledge, no satisfactory geometric interpretation is currently available. On the other hand, in the special case $n=2$, the extra term $I$ disappears, and the remaining weight
		\[
		\ud \nu
		=
		\frac{(1-|\xi|^2)^{\frac{4(1-\sigma)}{2\sigma-1}}\ud \xi}
		{\int_{\B^2}(1-|\xi|^2)^{\frac{4(1-\sigma)}{2\sigma-1}}\ud \xi}
		\]
		is exactly the weight appearing in Bergman spaces in complex analysis; see the discussion below.
		\end{rem}
		
		The main difference between our inequality and S.~Chen's inequality is that our inequality contains the weight
		$
		(1-|\xi|^2)^{\frac{2n(1-\sigma)}{2\sigma-1}},
		$
		which disappears when $\sigma=1$. This weight appears naturally for the following reason. For $(\mathbf{a},\mathbf{b})$ satisfying \eqref{Index Condi},  the formula (see \cite[Theorem 4.1]{Gong&Yang&Zhang} or \cite[(2.21)]{Yang&Zhang}) gives
		\begin{align}\label{Intro equ-1}
		Q_{\mathbf{a},\mathbf{b}}(1)(\xi)
		=
		\frac{\pi^{\frac{n}{2}}2^{1-\mathbf{b}}}{\Gamma(\frac{n}{2})|\S^{n-1}|}
		(1-|\xi|^2)^{\mathbf{a}+\mathbf{b}-1}
		{}_2F_1\left(\frac{n+\mathbf{a}}{2}-1,\frac{\mathbf{a}}{2}; \frac{n}{2}; |\xi|^2\right).
		\end{align}
		In Chen's inequality, the constraint is $\mathbf{a}+\mathbf{b}=1$, and therefore
		$
		(1-|\xi|^2)^{\mathbf{a}+\mathbf{b}-1}\equiv 1,
		$
		so this factor makes no contribution when differentiating with respect to $\mathbf{a}$. In our setting, however, this term is no longer constant, and it produces the additional weight
		$
		(1-|\xi|^2)^{\frac{2n(1-\sigma)}{2\sigma-1}}.
		$
		See the proof of Theorem \ref{Sec 2 thm} for details. When $\sigma<1$, one has $\frac{n}{2\sigma-1}>n$, and hence the left-hand side is measured in a stronger Lebesgue space. Meanwhile, the exponent of the weight $(1-|\xi|^2)$ is positive, so the weighted norm enforces a certain decay near the boundary. In contrast, when $\sigma>1$, one has $\frac{n}{2\sigma-1}<n$, and hence the left-hand side is measured in a weaker Lebesgue space. Meanwhile, the exponent of the weight $(1-|\xi|^2)$ is negative, so the weighted norm allows certain boundary singularities.
		
		As mentioned above, when $n=2$, this weight is closely related to the Bergman space, which indicates that its appearance here is highly nontrivial. To better illustrate our inequality and its connection with the Bergman space, we first give a brief introduction to this space.

\vspace{0.2cm}
		
		\textbf{Bergman space}:	Let $\alpha>1$ and $0<p<\infty$. We define the Bergman space $A^p_{\alpha}(\B^2)$ to be the class of holomorphic functions $F$ on the unit disc
	$
	\B^2=\{z\in \R^2:\ |z|<1\},
	$
	equipped with the norm
	\begin{align*}
		\|F\|_{A^p_{\alpha}(\B^2)}
		=
		\left(
		\int_{\B^2}
		|F(z)|^p
		(\alpha-1)(1-|z|^2)^{\alpha-2}
		\,\ud A
		\right)^{\frac{1}{p}}.
	\end{align*}
	Here $\ud A$ is the normalized measure on $\B^2$ such that
	$
	\int_{\B^2}\ud A=1.
	$
For more information on Bergman spaces, we refer to \cite{Duren,Zhu}.

	The Hardy space $H^p(\B^2)$, for $0<p<\infty$, consists of all holomorphic functions $F$ satisfying
	\begin{align*}
		\sup_{0\le r<1}
		\left(
		\frac{1}{2\pi}
		\int_{0}^{2\pi}
		|F(re^{i\theta})|^p
		\,\ud\theta
		\right)^{1/p}
		<+\infty.
	\end{align*}
	If $p\ge 1$, then the above quantity defines a norm on $H^p(\B^2)$. For $0<p\le q<+\infty$, it is clear that
	$
	H^q(\B^2)\subset H^p(\B^2).
	$
	For $p\ge 1$, the space $H^p(\B^2)$ can be viewed as a subspace of $L^p(\S^1)$ in the following sense: for any $F\in H^p(\B^2)$, define its radial limit by
	\begin{align*}
		\tilde{F}(e^{i\theta})
		=
		\lim_{r\to 1}F(re^{i\theta})
		\qquad \text{for a.e. } \theta\in \S^1.
	\end{align*}
	Then
	$
	\|\tilde{F}\|_{L^p(\S^1)}=\|F\|_{H^p(\B^2)}.
	$ More importantly, the Hardy space may be regarded as the limiting case of the Bergman space as $\alpha\to 1^+$, namely,
	\begin{align*}
		\lim_{\alpha\to 1^{+}}
		\|F\|_{A^p_{\alpha}(\B^2)}
		=
		\|F\|_{H^p(\B^2)}.
	\end{align*}
	
	An interesting problem concerning Bergman spaces, proposed in \cite[p.~688]{Bayart}, is the following.

\begin{question}\label{Intro Que}
	For any $0<p\leq q$ and $\alpha,\beta\geq 1$ such that $q/\beta \leq p/\alpha$, does the contractive inequality
	\begin{align*}
		\|F\|_{A^q_{\beta}(\B^2)}\leq \|F\|_{A^p_{\alpha}(\B^2)}
	\end{align*}
	hold?
\end{question}

Concerning this question, Carleman's inequality \eqref{Intro Caleman} is in fact equivalent to
\begin{align*}
	\|F\|_{A^2_2(\B^2)}
	\leq
	\|F\|_{H^1(\B^2)}.
\end{align*}
Burbea \cite{Burbea,Burbea-1,Burbea-3} generalized Carleman's inequality by proving that, for every $0<p<+\infty$ and every non-negative integer $n$, one has
\begin{align}\label{Intro Burbea}
	\|F\|_{A^{p(n+1)}_{n+1}(\B^2)}
	\leq
	\|F\|_{H^p(\B^2)}.
\end{align}
Recently, Bayart et al.~\cite{Bayart} proved the following theorem by a PDE approach, which provides a partial answer to this question.

\begin{thm C} [\protect{\cite[Theorem 1]{Bayart}}]  \label{Thm A-ineq}
	For $\alpha\geq \alpha_0=\frac{1+\sqrt{17}}{4}$ and $0<p<+\infty$, every $F\in A^p_{\alpha}(\B^2)$ satisfies
	\begin{align*}
		\|F\|_{A^{p(\alpha+1)/\alpha}_{\alpha+1}(\B^2)}\leq \|F\|_{A^{p}_{\alpha}(\B^2)}.
	\end{align*}
	Moreover, if $\alpha>\alpha_0$, then the equality holds if and only if $F$ is of the form
	\begin{align*}
		F(z)=\frac{C}{(1-z\bar{w})^{2\alpha/p}} \qquad\mathrm{for}\qquad C\in \mathbb{C}, ~w\in \B^2.
	\end{align*}
\end{thm C}

The weight in our inequality is precisely the weight appearing in the norm defining the Bergman space $A^{p\alpha}_{\alpha}$, with
	$\alpha=2/(2\sigma-1)$.
In the case $n=2$, our inequality is essentially equivalent to the following theorem, which may be viewed as a generalization of Burbea's inequality \eqref{Intro Burbea}; see \cite{Burbea-1}.

\begin{thm}\label{Thm 2}
	For $\alpha > 1$ and $p\geq 1$, if $F\in H^{p}(\B^2)$, then
	\begin{align*}
		\|F\|_{A^{p\alpha}_{\alpha}(\B^2)}\leq \|F\|_{H^{p}(\B^2)}.
	\end{align*}
	Moreover, the equality holds if and only if
	\begin{align*}
		F(z)=\frac{C}{(1-z\bar{a})^{2/p}}\qquad\mathrm{for}\qquad C\in\mathbb C, ~ a\in \B^2.
	\end{align*}
\end{thm}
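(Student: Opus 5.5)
We derive Theorem \ref{Thm 2} from Theorem \ref{Thm 1} with $n=2$, where $I\equiv 0$ and $P(f)=P_0(f)$ is the ordinary Poisson extension. The first step is to match the parameters. Take $\alpha=\frac{2}{2\sigma-1}$. Then $\sigma\in(\frac12,\frac32)$ corresponds exactly to $\alpha>1$, and the weight exponent is
$$\frac{4(1-\sigma)}{2\sigma-1}=\alpha-2.$$
Normalizing the weight gives $\ud\nu=(\alpha-1)(1-|z|^2)^{\alpha-2}\,\ud A$, and the Lebesgue exponent is $\frac{2}{2\sigma-1}=\alpha$. So Theorem \ref{Thm 1} reads
$$\Bigl(\int_{\B^2}e^{\alpha P(f)}\,\ud\nu\Bigr)^{1/\alpha}\le \int_{\S^1}e^f\,\ud\mu .$$

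Next, reduce to the case where $F$ has no zeros. Write $F=BG$ with $B$ a Blaschke product and $G\in H^p$ zero-free, with $\|G\|_{H^p}=\|F\|_{H^p}$. Since $|F|\le|G|$ in $\B^2$, it suffices to treat $G$, and equality forces $B$ to be a unimodular constant. For zero-free $G$, use the inner–outer factorization $G=S\,O$, where $S$ is singular inner and $O$ is outer. We have $|O(z)|=e^{P(f)(z)}$ with $f=\log|\tilde G|\in L^1(\S^1)$ (log-integrability holds for $H^p$ functions), and $|G|\le |O|$. Now apply Theorem \ref{Thm 1} to $pf/\alpha$... but the exponents must be matched: we need $|O|^{p\alpha}=e^{\alpha P(pf)}$. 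So apply the inequality to $g=pf$, which gives
$$\Bigl(\int e^{\alpha p P(f)}\,\ud\nu\Bigr)^{1/(p\alpha)}\le\Bigl(\int e^{pf}\,\ud\mu\Bigr)^{1/p}=\|F\|_{H^p}.$$
The left-hand side dominates $\|F\|_{A^{p\alpha}_\alpha}$. Note that $pf\in L^1$, which is exactly the hypothesis of Theorem \ref{Thm 1}. The hypothesis $p\ge1$ is not essential here; it just keeps $H^p$ normed.

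For the equality case, equality forces three things: $B$ is constant, $S$ is constant (since $|S|<1$ on a set of positive measure otherwise), and equality holds in Theorem \ref{Thm 1} for $g=pf$. The main obstacle is that the classification in Theorem \ref{Thm 1} is stated only for $g\in L^\infty$, whereas here $pf$ is merely $L^1$. I would handle this with a truncation/strict-convexity argument. One option is to show directly, via the limiting Gluck extremals, that any $L^1$ extremal is bounded. Alternatively, revisit the proof of Theorem \ref{Thm 1}, which presumably proceeds by a moving sphere argument on the Euler–Lagrange equation and needs only local boundedness obtained by regularity of that equation.

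Granting the classification, $pf=\log\frac{1-|a|^2}{|\eta-a|^2}+c$. Then
$$|O|=C\left|\frac{1}{(1-z\bar a)^{2}}\right|^{1/p}$$
and, since $O$ is outer with this modulus, $F=\frac{C}{(1-z\bar a)^{2/p}}$. The converse direction, that these functions attain equality, follows by direct computation or by the conformal invariance of both sides under disc automorphisms.
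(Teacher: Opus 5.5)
Your route is the paper's: Theorem \ref{Thm 1} with $n=2$ (so $I\equiv 0$ and $P$ is the classical Poisson integral), $\alpha=\frac{2}{2\sigma-1}$, applied to $g=p\log|\tilde F|$, combined with the inner--outer factorization and the fact that the inner factor has modulus at most $1$. Splitting the inner factor into a Blaschke product and a singular part only refines the paper's $F=M_FQ_F$. The inequality part is complete and correct. Applying Theorem \ref{Thm 1} directly to $L^1$ data is legitimate, since this is Case 2 in the proof of Theorem \ref{Sec 2 thm}. It is also cleaner than the paper's detour through a ``smooth $f$'' case. Your remark that $p\ge 1$ plays no role is right as well: the canonical factorization and $\|F\|_{H^p}^p=\int_{\S^1}|\tilde F|^p\,\ud\mu$ hold for every $0<p<\infty$.

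The genuine gap is the one you flag yourself, in the ``only if'' direction of the equality statement. Once you reduce to equality in Theorem \ref{Thm 1} for $g=p\log|\tilde F|$, you need the classification of extremals for data that are only in $L^1(\S^1)$ with $e^g\in L^1$. Theorem \ref{Thm 1} classifies extremals only in $L^\infty(\S^{n-1})$, and $\log|\tilde F|$ of an outer function can be unbounded. Writing ``granting the classification'' leaves exactly this step unproved.

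Neither of your suggested fixes is routine. The limiting procedure of Section \ref{Sec 2} transfers the inequality but carries no information about extremals. The regularity bootstrap of the Appendix starts from $e^{m_\sigma\mathcal{P}(f\circ\mathcal{S})}\in L^\infty(\R^n_+)$, which is derived from $f\in L^\infty$. The moving sphere step at \eqref{Sec 4 MS equ-b} also needs $u\in C^1$.

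One way to close the gap is to derive the Euler--Lagrange equation for an $L^1$ extremal and then prove from that equation that $e^{g}\in L^\infty$, after which the existing classification applies. The first variation along $\phi\in L^\infty(\S^1)$ is justified because $|P(\phi)|\le\|\phi\|_{L^\infty}$ and $\int e^{\alpha P(g)}\,\ud\nu<\infty$. Another way is to run the moving sphere argument in an integral form that does not use pointwise gradients.

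The paper's own proof has the same hole. In its Case 2 it simply asserts that $\log|Q_F|$ ``takes the form in Theorem 1'', although the boundary values of $\log|Q_F|$ are in general only in $L^1(\S^1)$.
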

\subsection{High dimensional Huber inequality}

Next, our motivations are twofold. First, we aim to establish a sharp version of Wang's isoperimetric inequality \eqref{Wang Isoperi} in the unit ball. Second, we hope to generalize the original Huber inequality \eqref{Intro Huber} to higher dimensions.

Let $n=2m\ge 2$, and consider the compact manifold $(\B^n,e^{2F}|\ud \xi|^2)$. By the definition of the $Q$-curvature, we have
\begin{align*}
    Q_g=e^{-nF}(-\Delta)^mF.
\end{align*}
To establish a higher-dimensional weighted Huber isoperimetric inequality on $\B^n$, a basic observation is that one cannot directly control the ``higher-order derivatives'' of $F$ on the boundary $\S^{n-1}$. Therefore, one is naturally led to restrict $F$ to a function space in which these higher-order boundary terms vanish. The most suitable choice is to impose conditions in terms of the higher-order conformally invariant boundary operators $B^{2m-1}_j$, and accordingly define the space
\begin{align*}
     \mathcal H_f
	:=
	\left\{
	u\in C^n( \overline{\B^n}) :
	B^{2m-1}_0u=f,\ B^{2m-1}_1u=\cdots=B^{2m-1}_{m-1}u=0
	\right\}.
    \end{align*}
    For $u\in \mathcal{H}_{f}$, the conformally invariant boundary operators $B^{2m-1}_j$ may be chosen as
\begin{align*}
	B^{2m-1}_0 u:=u|_{\S^{n-1}},
	\qquad
	B^{2m-1}_j u:=\partial_\nu^j u-T_j(u|_{\S^{n-1}}),
	\qquad j=1,\dots,m-1,
	\end{align*}
where $T_j$ is an intrinsic differential operator on $\S^{n-1}$; see \eqref{Sec 5 T_j}. 

For $m\geq 1$ and $\xi, \zeta\in \B^n$, let $\Psi_{\zeta}:\xi\mapsto \Psi_{\zeta}(\xi)$ denote the conformal transformation from $\B^n$ onto itself;
\begin{align*}
    \Psi_{\zeta}(\xi)
	=
	\frac{(1-|\zeta|^2)(\xi-\zeta)-|\xi-\zeta|^2\zeta}{1-2\xi\cdot \zeta+|\xi|^2|\zeta|^2},
    \end{align*}
for more details on $\Psi_{\zeta}$, see Section \ref{Sec 3}. Then the Green function of $(-\Delta)^m$ with Dirichlet boundary conditions is given by the Boggio's formula  (see \cite{Boggio} or  \cite[P. 50]{GGS}), and it can be rewritten as
    \begin{align}
        G_n(\xi,\zeta)=
		k_n^{-1}\left(
		\log \frac1{|\Psi_\zeta(\xi)|}
		+
		J_m(|\Psi_\zeta(\xi)|^2)\right),
    \end{align}
    where $k_n=(n-1)!|\S^n|/2=2^{n-2}\Gamma^2(n/2)|\S^{n-1}|$ (see Theorem A) and
    \begin{align*}
      J_m(t):=\frac12\int_t^1 \frac{(1-s)^{m-1}-1}{s}\ud s.
    \end{align*}
For the precise derivation, see Section \ref{Sec 5}. Our next result establishes a sharp weighted Huber isoperimetric inequality in higher dimensions. Since all of our assumptions are conformally invariant, the theorem also remains valid on domains that are conformally equivalent to the unit ball.
    \begin{thm}\label{Thm 3}
	Let $n=2m\ge 2$ and consider the compact manifold $(\B^n, e^{2F}|\ud \xi|^2)$ with boundary $\S^{n-1}$. If $F\in \mathcal{H}_f$ satisfies
	\begin{align}\label{Intro Integral Cond}
	    \gamma:=\int_{\B^n}Q_g^{+}\ud V_g=\int_{\B^n}\left((-\Delta)^mF\right)^+d\xi<(2\sigma-1)k_n,
	\end{align}
where $\sigma\in\left(\frac{1}{2},\frac{2n-1}{2(n-1)}\right)$, then we have the isoperimetric inequality
	\begin{align}
\|e^{F}\|_{L^{\frac{n}{2\sigma-1}}(\B^n,\ud \nu)}	\leq M_{\sigma,\beta;I,J_m} \|e^{f}\|_{L^{n-1}(\S^{n-1},\ud \mu)},
		\label{Intro:Huber-final}
	\end{align}
	where $\ud \nu$ is given in Theorem \ref{Thm 1}, $
	\beta:=\frac{n\gamma}{(2\sigma-1)k_n}
	$ and
	\[
	M_{\sigma,\beta;I,J_m}
	=
	\left(\frac{
		\displaystyle\int_0^1
		r^{n-1-\beta}(1-r^2)^{\frac{2n(1-\sigma)}{2\sigma-1}}
		e^{\frac{2n}{2\sigma-1} I(r^2)+\beta J_m(r^2)}\ud r
	}{
		\displaystyle\int_0^1
		r^{n-1}(1-r^2)^{\frac{2n(1-\sigma)}{2\sigma-1}}
		e^{\frac{2n}{2\sigma-1}  I(r^2)}\ud r
	}\right)^{\frac{2\sigma-1}{n}}.
	\]	
\end{thm}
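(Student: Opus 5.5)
Write $F$ through the Green representation of $(-\Delta)^m$ adapted to $\mathcal H_f$. Because $B^{2m-1}_jF=0$ for $1\le j\le m-1$, the boundary part of $F$ should be exactly the polyharmonic extension with vanishing higher-order conformal boundary data. By Subsection~\ref{Sec 5.1} this is the hyperbolic harmonic extension $P(f)$. Hence
\[
F(\xi)=P(f)(\xi)+\int_{\B^n}G_n(\xi,\zeta)\,(-\Delta)^mF(\zeta)\,\ud\zeta .
\]
The first check is that the Dirichlet Green function of Boggio's formula is the right kernel for the space $\mathcal H_f$. This amounts to verifying that $(-\Delta)^m(F-P(f))=(-\Delta)^mF$, and that $F-P(f)$ has vanishing Dirichlet data up to order $m-1$. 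Both follow because $T_j$ acts on $F|_{\S^{n-1}}=P(f)|_{\S^{n-1}}$ in the same way.

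Next split $(-\Delta)^mF=\mu^+-\mu^-$. Since $G_n\ge0$, dropping $\mu^-$ gives
\[
F\le P(f)+k_n^{-1}\int_{\B^n}\Big(\log\tfrac1{|\Psi_\zeta(\xi)|}+J_m(|\Psi_\zeta(\xi)|^2)\Big)\,\ud\mu^+(\zeta).
\]
Put $s=\frac{n}{2\sigma-1}$ and the probability measure $\ud\lambda=\ud\mu^+/\gamma$. Write the exponent as $P(f)+\frac{\gamma}{k_n}\int(\cdots)\,\ud\lambda$ and apply Jensen in $\zeta$, using convexity of $\exp$. This gives
\[
\int_{\B^n}e^{sF}\ud\nu\le\int_{\B^n}\ud\lambda(\zeta)\int_{\B^n}e^{sP(f)(\xi)}\,|\Psi_\zeta(\xi)|^{-\beta}e^{\beta J_m(|\Psi_\zeta(\xi)|^2)}\,\ud\nu(\xi),
\]
with $\beta=s\gamma/k_n=\frac{n\gamma}{(2\sigma-1)k_n}<n$. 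Strictly, $s\gamma/k_n$ produces the factor $e^{\beta J_m}$ correctly.

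For each fixed $\zeta$, change variables $\xi=\Psi_\zeta^{-1}(x)$ (or $\Psi_\zeta$, which is an involution up to sign). The conformal invariance from the proof of Theorem~\ref{Thm 1} then applies. The measure $\ud\nu$ combined with $e^{sP(f)}$ transforms so that $e^{sP(f)\circ\Psi_\zeta}\ud\nu$ becomes $e^{sP(\tilde f)}\ud\nu$, where $\tilde f=f\circ\Psi_\zeta+\log(\text{boundary Jacobian})^{1/(n-1)}$, and $\|e^{\tilde f}\|_{L^{n-1}}=\|e^f\|_{L^{n-1}}$. This is exactly the invariance the paper says restores conformal invariance: the weight $(1-|\xi|^2)^{\cdot}e^{sI}$ is the conformal density.

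After this reduction the problem is a weighted inequality with the radial extra weight $w(|x|)=|x|^{-\beta}e^{\beta J_m(|x|^2)}$:
\[
\int e^{sP(\tilde f)}\,w\,\ud\nu\le M^{s}\,\|e^{\tilde f}\|_{L^{n-1}}^{s}.
\]
This is the main obstacle. I would prove it by a rearrangement or symmetrization argument. Rotational averaging over $O(n)$ together with Jensen reduces the inequality to showing that the radial profile $\rho(r)=\int_{\S^{n-1}}e^{sP(\tilde f)(r\theta)}\ud\theta$ is dominated in the right way.

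The tool is the moving-sphere/limiting argument behind Theorem~\ref{Thm 1}. Applied on each sphere $|x|=r$ through the subcritical Theorem~B, with the weight absorbed, it should give
\[
\rho(r)\le\Big(\int_{\S^{n-1}}e^{(n-1)\tilde f}\ud\mu\Big)^{s/(n-1)}\,\times\,\text{(value for constant data)}.
\]
This says that, sphere by sphere, constants maximize. Integrating $\rho$ against the radial weight then yields exactly the ratio $M_{\sigma,\beta;I,J_m}^{s}$. The ratio is finite because $\beta<n$ and $J_m(r^2)$ is bounded as $r\to0$.

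Finally, integrate in $\lambda$ to obtain \eqref{Intro:Huber-final}. The sphere-wise domination is the step I expect to be hardest. If it fails, the fallback is to derive a weighted version of Theorem~\ref{Thm 1} directly from Theorem~B by the same limiting path, with the radial weight inserted.
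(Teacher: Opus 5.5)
Your reduction to the weighted inequality $\int e^{sP(\tilde f)}w\,\ud\nu\le M^s\|e^{\tilde f}\|_{L^{n-1}}^s$ matches the paper's proof. That reduction consists of the Green representation, discarding the negative part, Jensen in $\zeta$, and the conformal change of variables $\xi=\Psi_{-\zeta}(\xi')$.

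The gap is in the step you flag as the main obstacle. The proposed sphere-by-sphere domination
\[
\rho(r)=\int_{\S^{n-1}}e^{sP(\tilde f)(r\theta)}\,\ud\theta\le |\S^{n-1}|\,\|e^{\tilde f}\|_{L^{n-1}(\S^{n-1},\ud\mu)}^{s}\qquad\text{for every } r\in(0,1)
\]
is false. For continuous $\tilde f$, $\rho(r)\to|\S^{n-1}|\int_{\S^{n-1}}e^{s\tilde f}\,\ud\mu$ as $r\to1$. Since $s=\frac{n}{2\sigma-1}>n-1$, Hölder gives $\|e^{\tilde f}\|_{L^{s}(\ud\mu)}>\|e^{\tilde f}\|_{L^{n-1}(\ud\mu)}$ unless $\tilde f$ is constant. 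So the bound fails near the boundary sphere for every nonconstant datum, and the data $f_{-\zeta}$ produced by the change of variables are nonconstant in general. Even for the bubbles $f_{a,c}$ with $a\neq0$, where Theorem \ref{Thm 1} is an equality, $\rho$ is strictly increasing. It therefore lies below the constant profile for small $r$ and above it near $r=1$, so Theorem \ref{Thm 1} holds only after integrating in $r$. Theorem B is a whole-ball inequality with no version on each sphere to invoke. Your fallback, inserting the non-conformally-invariant weight $|x|^{-\beta}e^{\beta J_m}$ into the limiting path, has no sharp inequality to start from.

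The missing idea is to decouple the weight from the data by monotonicity, instead of bounding $\rho$ pointwise. Since $P(\tilde f)$ is hyperbolic harmonic,
\[
\operatorname{div}\bigl((1-|\xi|^2)^{2-n}\nabla e^{sP(\tilde f)}\bigr)=s^2(1-|\xi|^2)^{2-n}e^{sP(\tilde f)}|\nabla P(\tilde f)|^2\ge0,
\]
and the divergence theorem on $B_r$ gives $\rho'(r)\ge0$. The weight is $w(r)=e^{\beta g_m(r)}$ with $g_m(r)=\log\frac1r+J_m(r^2)$, and it is decreasing because $g_m'(r)=-\frac{(1-r^2)^{m-1}}{r}<0$. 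Set $\ud m(r)=r^{n-1}(1-r^2)^{s-n}e^{2sI(r^2)}\ud r$. Chebyshev's inequality for oppositely monotone functions (Lemma \ref{lem:reverse-Chebyshev}) then yields
\[
\int_0^1\rho\,w\,\ud m\le\frac{\int_0^1 w\,\ud m}{\int_0^1\ud m}\int_0^1\rho\,\ud m=M^{s}_{\sigma,\beta;I,J_m}\int_0^1\rho\,\ud m .
\]
Only at this point is Theorem \ref{Thm 1} applied, to the whole-ball integral $\int_0^1\rho\,\ud m=\int_{\B^n}e^{sP(\tilde f)}(1-|\xi|^2)^{s-n}e^{2sI(|\xi|^2)}\,\ud\xi$. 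It is combined with the invariance $\|e^{\tilde f}\|_{L^{n-1}}=\|e^{f}\|_{L^{n-1}}$. This is how the sharp constant arises as a ratio of radial integrals without any pointwise-in-$r$ comparison.
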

\begin{rem}
    The assumption \eqref{Intro Integral Cond} is equivalent to $\beta<n$, and this condition is sharp for ensuring the finiteness of the sharp constant $M_{\sigma,\beta;I,J_m}$. Moreover, the assumption $F\in \mathcal{H}_f$ is also essential and cannot be removed; see Lemma \ref{Sec 5 Lem-2}.
\end{rem}
\begin{rem}
    If $\gamma=0$, then $M_{\sigma,\beta;I,J_m}=1$ and \eqref{Intro:Huber-final} reduces to the sharp inequality \eqref{Intro Inequ}. In particular, in the smooth class, the equality in \eqref{Intro:Huber-final} can hold only if $\gamma=0$ and $F=P (f_{a,c})$, with $f_{a,c}=\log\frac{1-|a|^2}{|\eta-a|^2 }+c$ or $c$ being an extremal of \eqref{Intro Inequ}. For $\gamma>0$, the equality cannot occur in the smooth class.
	In the enlarged class of functions allowing one interior singularity, for example,
    \begin{align*}
        F={P}(f)+\int_{\B^n} G_n(\xi,\zeta)\ud \hat{\mu}(\zeta)\quad\mathrm{where}~ \ud \hat{\mu} \mathrm{~is~ a~ Radon~ measure},    \end{align*}
    the equality is attained by $\ud \hat{\mu}=\gamma\delta_{a}$,  and 
	\begin{align}\label{Intro Equ-Hu}
	    F=F_{a,c}(\xi)
	=
	P(f_{a,c})(\xi)
	+
	\gamma\,G_n(\xi,a).	    
    \end{align}
     Approximating $\gamma\delta_{a}$ by smooth nonnegative densities yields a sequence of admissible smooth functions for which the quotient in \eqref{Intro:Huber-final} converges to $M_{\sigma,\beta;I,J_m}$. Hence $M_{\sigma,\beta;I,J_m}$ is also the sharp constant for the case $\gamma>0$.
\end{rem}

For $n=2$, we also obtain a version of the inequality on simply connected domains, which generalizes the classical Huber inequality. To state it, we first recall the notion of conformal radius. Let \(\Omega\subset\mathbb C\) be simply connected and \(x\in\Omega\). The conformal radius of \(\Omega\) at \(x\) is defined by
\[
	r_\Omega(x):=\frac{1}{|\psi'(x)|},
\]
where \(\psi:\Omega\to\mathbb D\) is a conformal map satisfying \(\psi(x)=0\); for more details, see \cite[p.~3]{P. Nicolas and S. Nikos}. Let \(\Psi:\mathbb D\to\Omega\) be conformal and write \(x=\Psi(\xi)\). Set \(\phi=\Psi^{-1}\). Since \(\phi(x)=\xi\), we compose \(\phi\) with the disk automorphism
\[
	T_\xi(w)=\frac{w-\xi}{1-\overline{\xi}w},
\]
so that \(\psi:=T_\xi\circ\phi\) maps \(x\) to \(0\). Hence
\[
	r_\Omega(x)
	=
	\frac{1}{|\psi'(x)|}
	=
	\frac{1}{|T_\xi'(\xi)|\,|\phi'(x)|}.
\]
Since
\[
	|T_\xi'(\xi)|=\frac{1}{1-|\xi|^2}
	\quad\text{and}\quad
	|\phi'(x)|=\frac{1}{|\Psi'(\xi)|},
\]
hence for fixed  conformal map 
\(\Psi:\mathbb D\to\Omega\), one has
\[
    r_\Omega(\Psi(\xi))=(1-|\xi|^2)|\Psi'(\xi)|,
    \qquad \xi\in\mathbb D.
\]
This formula is independent of the choice of \(\Psi\), since two such maps differ by an automorphism of \(\mathbb D\).
\begin{cor}\label{cor:Huber_domain}
	Let $\Omega\subset\mathbb C$ be a bounded simply connected $C^1$ domain, and let $\sigma\in\left(\frac12,\frac32\right)$. For $F\in C^2(\Omega)\cap C(\overline\Omega)$, suppose
	\[
	\gamma:=\int_\Omega(-\Delta F(x))^+\ud x<2\pi(2\sigma-1).
	\]
	Then
	\[
	\left(
	\frac{3-2\sigma}{\pi(2\sigma-1)}
	\int_\Omega e^{\frac{2}{2\sigma-1}F(x)}r^{\frac{4(1-\sigma)}{2\sigma-1}}_\Omega(x)\ud x
	\right)^{\frac{2\sigma-1}{2}}
	\le
	\frac{c_\sigma}{2\pi}\int_{\partial\Omega}e^F\ud s,
	\]
	where
	\[
	c_\sigma=
	\left[
	\frac{3-2\sigma}{2\sigma-1}
	B\left(
	1-\frac{\gamma}{2\pi(2\sigma-1)},\,
	\frac{3-2\sigma}{2\sigma-1}
	\right)
	\right]^{\frac{2\sigma-1}{2}},
	\]
    here $B(\cdot,\cdot)$ denote the beta function.
\end{cor}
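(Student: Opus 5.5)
The plan is to transport Theorem \ref{Thm 3} with $n=2$ (so $m=1$) from $\B^2$ to $\Omega$ by a Riemann map. First I would specialize the constants. For $n=2$ we have $I\equiv 0$ and $J_1\equiv 0$, because $(1-s)^{0}-1=0$. We also have $k_2=2\pi$, and $\mathcal H_f$ only prescribes the boundary trace. Write $a:=\frac{4(1-\sigma)}{2\sigma-1}$, so that $a+1=\frac{3-2\sigma}{2\sigma-1}$ and $a+2=\frac{2}{2\sigma-1}$. Set $\beta=\frac{\gamma}{\pi(2\sigma-1)}$, so that $\beta/2=\frac{\gamma}{2\pi(2\sigma-1)}$. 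Substituting $t=r^2$ in $M_{\sigma,\beta;I,J_1}$ turns the numerator into $\frac12 B(1-\beta/2,a+1)$ and the denominator into $\frac{1}{2(a+1)}$. Hence $M_{\sigma,\beta;I,J_1}=c_\sigma$. Likewise $\int_{\B^2}(1-|\xi|^2)^a\ud\xi=\frac{\pi}{a+1}=\frac{\pi(2\sigma-1)}{3-2\sigma}$, which produces the normalizing factor on the left-hand side.

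Next, fix a conformal $\Psi:\B^2\to\Omega$ and set $u:=F\circ\Psi+\log|\Psi'|$. Since $\log|\Psi'|$ is harmonic, $-\Delta u=|\Psi'|^2(-\Delta F)\circ\Psi$, and hence $\int_{\B^2}(-\Delta u)^+\ud\xi=\gamma$. For the interior term, use $\ud x=|\Psi'|^2\ud\xi$ together with the identity $r_\Omega(\Psi(\xi))=(1-|\xi|^2)|\Psi'(\xi)|$ derived before the statement. This gives
\[
\int_\Omega e^{\frac{2}{2\sigma-1}F}r_\Omega^{a}\ud x=\int_{\B^2}e^{\frac{2}{2\sigma-1}F\circ\Psi}|\Psi'|^{a+2}(1-|\xi|^2)^a\ud\xi=\int_{\B^2}e^{\frac{2}{2\sigma-1}u}(1-|\xi|^2)^a\ud\xi,
\]
because $a+2=\frac{2}{2\sigma-1}$. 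For the boundary term, $\ud s=|\Psi'|\ud\theta$ gives $\int_{\partial\Omega}e^F\ud s=2\pi\int_{\S^1}e^{u}\ud\mu$. Plugging these identities into \eqref{Intro:Huber-final} yields exactly the claimed inequality.

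The remaining issue, and the main obstacle, is regularity. Theorem \ref{Thm 3} requires $u\in C^2(\overline{\B^2})$, whereas $F$ is only $C^2$ inside and continuous up to $\partial\Omega$. Moreover, for a merely $C^1$ domain, $\Psi'$ need not extend continuously, since Kellogg's theorem needs Dini or $C^{1,\alpha}$ regularity. I would handle this by dilation.

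1. Apply Theorem \ref{Thm 3} to $u_r(\xi):=u(r\xi)$ for $r<1$. Each $u_r$ is $C^2$ on $\overline{\B^2}$, and a mollification can be added if more smoothness is required.
2. We have $\int(-\Delta u_r)^+=\int_{r\B^2}(-\Delta u)^+\le\gamma$. Since $M$ is increasing in $\beta$ (the factor $r^{-\beta}$ is increasing in $\beta$), we get the inequality for $u_r$ with the constant $c_\sigma$.
3. Let $r\to1$. The left-hand side is handled by Fatou's lemma. For the right-hand side, $F\circ\Psi(r\cdot)\to F\circ\Psi$ uniformly, since $F\circ\Psi$ is continuous on the closed disc.
4. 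It then remains to know that $|\Psi'(r\eta)|\to|\Psi'(\eta)|$ in $L^1(\S^1)$. Because $\partial\Omega$ is rectifiable, the F.~and M.~Riesz theorem gives $\Psi'\in H^1(\B^2)$, and $L^1$ convergence of the radial dilates follows.

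This limit recovers the stated inequality with $\int_{\partial\Omega}e^F\ud s$ on the right.
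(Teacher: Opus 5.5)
Your proof is correct and follows the paper's route. You pull back by a Riemann map with $u=F\circ\Psi+\log|\Psi'|$, use the harmonicity of $\log|\Psi'|$, the conformal radius identity and $\ud s=|\Psi'|\ud\theta$, and read off $M_{\sigma,\beta;I,J_1}=c_\sigma$ and the normalization $\pi(2\sigma-1)/(3-2\sigma)$ from the $n=2$ case of Theorem \ref{Thm 3}. Your dilation step (Theorem \ref{Thm 3} for $u(r\cdot)$, monotonicity of $M$ in $\beta$, Fatou on the left, and $\Psi'\in H^1$ on the right) is an explicit version of the paper's ``approximate by smooth domains'' remark, with $\Omega_r=\Psi(r\B^2)$; it also supplies the $C^2(\overline{\B^2})$ regularity that Theorem \ref{Thm 3} requires, which the paper's proof, having only $f\in C^2(\B^2)\cap C(\overline{\B^2})$, passes over.
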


In particular, when \(\sigma=1\), the above inequality reduces to the classical Huber isoperimetric inequality on simply connected domains.

\begin{cor}\label{cor:Huber_domain-1}
	Let $\Omega\subset\mathbb C$ be a bounded simply connected $C^1$ domain. For $F\in C^2(\Omega)\cap C(\overline\Omega)$, suppose
	\[
	\gamma:=\int_\Omega(-\Delta F(x))^+\ud x<2\pi.
	\]
	Then
	\[
	\left(
	\frac{1}{\pi}
	\int_\Omega e^{2F(x)}\ud x
	\right)^{\frac{1}{2}}
	\le
	\frac{c_1}{2\pi}\int_{\partial\Omega}e^F\ud s,
	\]
	where
	\begin{align*}
	c_1=&
	\left[
	B\left(
	1-\frac{\gamma}{2\pi},1
	\right)
	\right]^{\frac{1}{2}}
    =\left[
	\frac{\Gamma(1)\Gamma(1-\frac{\gamma}{2\pi})}{\Gamma(2-\frac{\gamma}{2\pi})}
	\right]^{\frac{1}{2}}=\left(\frac{1}{1-\frac{\gamma}{2\pi}}\right)^{1/2}.
    \end{align*}
\end{cor}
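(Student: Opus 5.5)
The plan is to obtain this statement as the special case $\sigma=1$ of Corollary \ref{cor:Huber_domain}, which is admissible since $1\in\left(\frac12,\frac32\right)$. With $\sigma=1$ the hypothesis $\gamma<2\pi(2\sigma-1)$ of Corollary \ref{cor:Huber_domain} becomes exactly $\gamma<2\pi$. The regularity assumptions ($\Omega$ a bounded simply connected $C^1$ domain, $F\in C^2(\Omega)\cap C(\overline\Omega)$) are the same as there, so Corollary \ref{cor:Huber_domain} applies verbatim.

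Next I will substitute $\sigma=1$ into each quantity of that inequality. The exponent of the conformal radius is $\frac{4(1-\sigma)}{2\sigma-1}=0$, so the factor $r_\Omega^{\frac{4(1-\sigma)}{2\sigma-1}}(x)$ is identically $1$. The exponential becomes $e^{\frac{2}{2\sigma-1}F}=e^{2F}$, and the prefactor is $\frac{3-2\sigma}{\pi(2\sigma-1)}=\frac1\pi$. The outer power is $\frac{2\sigma-1}{2}=\frac12$. Hence the left-hand side reduces to $\left(\frac1\pi\int_\Omega e^{2F}\,\ud x\right)^{1/2}$, while the right-hand side $\frac{c_\sigma}{2\pi}\int_{\partial\Omega}e^F\,\ud s$ keeps its form.

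It remains to evaluate $c_1$. Setting $\sigma=1$ gives
\[
c_1=\left[1\cdot B\left(1-\tfrac{\gamma}{2\pi},1\right)\right]^{1/2}.
\]
Writing $B(a,1)=\frac{\Gamma(a)\Gamma(1)}{\Gamma(a+1)}$ and using $\Gamma(a+1)=a\Gamma(a)$, we get $B(a,1)=\frac1a$. With $a=1-\frac{\gamma}{2\pi}>0$ this yields $c_1=\left(1-\frac{\gamma}{2\pi}\right)^{-1/2}$, which is the stated chain of equalities.

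There is no genuine obstacle: the whole argument is a substitution into Corollary \ref{cor:Huber_domain}. The only point to check is that $\sigma=1$ lies in the open interval $\left(\frac12,\frac32\right)$ rather than at an endpoint, which it does. A useful sanity check is that squaring the result gives $\int_\Omega e^{2F}\le \frac{2\pi^2}{2\pi-\gamma}\cdot\frac{1}{4\pi^2}\left(\int_{\partial\Omega}e^F\,\ud s\right)^2$. For $\Omega=\B^2$, where $\frac{\ud s}{2\pi}=\ud\mu$, this is exactly Huber's inequality \eqref{Intro Huber}.
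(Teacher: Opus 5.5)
Your proposal is correct and follows the paper's route. The paper obtains this statement by setting $\sigma=1$ in Corollary~\ref{cor:Huber_domain}: the conformal-radius weight then has exponent $0$, the prefactor becomes $\frac{1}{\pi}$, and $c_1=\bigl[B(1-\frac{\gamma}{2\pi},1)\bigr]^{1/2}=(1-\frac{\gamma}{2\pi})^{-1/2}$, and your consistency check against \eqref{Intro Huber} on $\B^2$ is also right.
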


The paper is organized as follows. In Section \ref{Sec 2}, we use a limiting approach to derive the weighted Carleman-type inequality. In Section \ref{Sec 3}, we establish the conformal invariance of the weighted Carleman inequality and derive its Euclidean version. In Section \ref{Sec 4}, we apply the moving sphere method to classify the extremal functions of the weighted Carleman inequality and, in the case \(n=2\), prove the equivalence between our weighted Carleman inequality and a sharp norm inequality in the Bergman space. In Section \ref{Sec 5}, we discuss conformally invariant boundary operators and derive the weighted Huber-type isoperimetric inequality. Finally, in Appendix \ref{Appendix}, we prove the regularity of extremal functions for the weighted Carleman inequality.

		\section{The limiting approach}\label{Sec 2}
			We first recall some basic notation concerning the  hypergeometric function; for a complete treatment, see \cite{Hypergeometry,Gradshteyn&Ryzhik}.
		Given  real numbers $a,b,c$,  define
		\begin{align}\label{Hyper}
			{}_2F_1\left(a,b;c; z \right)=\sum_{r=0}^{+\infty}\frac{(a)_{r}(b)_{r}}{(c)_{r}}\frac{z^r}{r!}\qquad\mathrm{for}\qquad |z|<1,
		\end{align}
		where $ c \neq 0, -1, -2, \dots, $ and $(a)_k $ denotes the rising Pochhammer symbol:	
		$$
		(a)_{0}=1,\quad (a)_{k}=a(a+1)\cdots(a+k-1), \quad k\geq1.
		$$	Clearly, we have ${}_2F_1(a,b;c;z)={}_2F_1(b,a;c;z)$.   If $\mathrm{Re}~ c>\mathrm{Re}~ b>0$, it has the integral representation (cf. \cite[p.59]{Hypergeometry} ):
		\begin{align}\label{Pre 1.8}
			{}_2F_1(a,b;c;z)=\frac{\Gamma(c)}{\Gamma(b)\Gamma(c-b)}\int_{0}^{1}t^{b-1}(1-t)^{c-b-1}(1-tz)^{-a}\ud t.
		\end{align}
		Below we list two classical identities  that will be used in the subsequent analysis:
		\begin{itemize}	
			\item Transformation (cf. \cite[p.~1018: 9.131-1]{Gradshteyn&Ryzhik}):
			\begin{equation}\label{Pre 1.4}
				\begin{split}
					{}_2F_1(a,b;c;z)=(1-z)^{c-a-b} {}_2F_1(c-a,c-b;c;z).
				\end{split}
			\end{equation}
			
			\item Transformation (cf. \cite[p.1017: 9.122-1]{Gradshteyn&Ryzhik}): if $\mathrm{Re}~c>\mathrm{Re}(a+b)$, then
			\begin{equation}\label{Pre 1.5}
				\begin{split}
					{}_2F_1(a,b;c;1)=\frac{\Gamma(c)\Gamma(c-a-b)}{\Gamma(c-a)\Gamma
						(c-b)}.
				\end{split}
			\end{equation}
		\end{itemize}
	
Recall the definition of $Q_{\mathbf{a},\mathbf{b}}$ in \eqref{Def Q}. Following \cite{Gong&Yang&Zhang,Yang&Zhang}, we view it as a nonlocal operator and recall the following formula.

\begin{lem}[\protect{\cite[Theorem 4.1]{Gong&Yang&Zhang}; see also \cite[(2.21)]{Yang&Zhang}}]\label{Lem 2.1}
Assume that $\mathbf{a},\mathbf{b}$ satisfy the constraint \eqref{Index Condi}, and that $Q_{\mathbf{a},\mathbf{b}}$ is defined by \eqref{Def Q}. Then
\begin{align}\label{Sec Def Q}
	Q_{\mathbf{a},\mathbf{b}}(1)(\xi)
	=
	\frac{\pi^{\frac{n}{2}}2^{1-\mathbf{b}}}{\Gamma(\frac{n}{2})|\S^{n-1}|}
	(1-|\xi|^2)^{\mathbf{a}+\mathbf{b}-1}
	{}_2F_1\left(\frac{n+\mathbf{a}}{2}-1,\frac{\mathbf{a}}{2}; \frac{n}{2}; |\xi|^2\right).
\end{align}
\end{lem}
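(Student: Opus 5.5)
The identity \eqref{Sec Def Q} is a direct computation of a spherical average followed by one hypergeometric transformation. Both $H(\xi,\cdot)$ and $\ud\mu$ are invariant under rotations fixing $\xi$, so $Q_{\mathbf{a},\mathbf{b}}(1)$ is radial. It therefore suffices to take $\xi=re_n$ with $0\le r<1$. Write $s:=\frac{n-\mathbf{a}}{2}$, so that
\begin{align*}
Q_{\mathbf{a},\mathbf{b}}(1)(re_n)=\Big(\frac{1-r^2}{2}\Big)^{\mathbf{b}}\,A_s(r),\qquad A_s(r):=\int_{\S^{n-1}}(1-2rt+r^2)^{-s}\,\ud\mu(\eta),\quad t=\eta_n .
\end{align*}
The plan has three steps: identify $A_s(r)$ as a hypergeometric function of $r^2$, apply the Euler transformation \eqref{Pre 1.4}, and check that the constant in \eqref{Sec Def Q} equals $1$.

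\textbf{Step 1 (the spherical average).} The claim is
\begin{align*}
A_s(r)={}_2F_1\Big(s,\,s-\tfrac n2+1;\,\tfrac n2;\,r^2\Big).
\end{align*}
To prove it, write $1-2rt+r^2=(1+r^2)(1-x)$ with $x=\frac{2rt}{1+r^2}$, and expand $(1-x)^{-s}$ binomially. Under $\ud\mu$ the odd moments of $t$ vanish and the even ones are $\int t^{2j}\ud\mu=\frac{(1/2)_j}{(n/2)_j}$. This gives
\begin{align*}
A_s(r)=(1+r^2)^{-s}\sum_{j\ge0}\frac{(s)_{2j}}{(2j)!}\frac{(1/2)_j}{(n/2)_j}\Big(\frac{2r}{1+r^2}\Big)^{2j}
=(1+r^2)^{-s}\,{}_2F_1\Big(\tfrac s2,\tfrac{s+1}{2};\tfrac n2;\tfrac{4r^2}{(1+r^2)^2}\Big),
\end{align*}
using $(s)_{2j}=4^j(s/2)_j((s+1)/2)_j$ and $(2j)!=4^j j!(1/2)_j$.

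The quadratic transformation
\begin{align*}
{}_2F_1\big(a,b;a-b+1;z\big)=(1+z)^{-2a}\,{}_2F_1\Big(\tfrac a2,\tfrac{a+1}2;a-b+1;\tfrac{4z}{(1+z)^2}\Big),
\end{align*}
applied with $a=s$, $b=s-\frac n2+1$ and $z=r^2$ (so that $a-b+1=\frac n2$), then yields the claimed formula.

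An alternative route, if one prefers to avoid quadratic transformations, is the Gegenbauer generating function $(1-2rt+r^2)^{-s}=\sum_k C^{s}_k(t)r^k$, averaged against $(1-t^2)^{\frac{n-3}{2}}$. A third option is to observe that $A_s$ is radial, real-analytic at $0$, and solves the ODE obtained from $\Delta_\xi|\xi-\eta|^{-2s}=2s(2s+2-n)|\xi-\eta|^{-2s-2}$; one then matches coefficients with the hypergeometric equation in the variable $r^2$.

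\textbf{Step 2 (Euler transformation).} Apply \eqref{Pre 1.4} with $a=s$, $b=s-\frac n2+1$, $c=\frac n2$. Then
\begin{align*}
c-a-b=\tfrac n2-(n-\mathbf{a})+\tfrac n2-1=\mathbf{a}-1,\qquad c-a=\tfrac{\mathbf{a}}2,\qquad c-b=\tfrac{n+\mathbf{a}}{2}-1 .
\end{align*}
Hence
\begin{align*}
A_s(r)=(1-r^2)^{\mathbf{a}-1}\,{}_2F_1\Big(\tfrac{n+\mathbf{a}}{2}-1,\tfrac{\mathbf{a}}{2};\tfrac n2;r^2\Big),
\end{align*}
using the symmetry of ${}_2F_1$ in its first two parameters. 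Multiplying by $\big(\frac{1-r^2}{2}\big)^{\mathbf{b}}$ gives the factor $2^{-\mathbf{b}}(1-r^2)^{\mathbf{a}+\mathbf{b}-1}$.

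\textbf{Step 3 (the constant).} Since $|\S^{n-1}|=2\pi^{n/2}/\Gamma(n/2)$, the prefactor $\frac{\pi^{n/2}2^{1-\mathbf{b}}}{\Gamma(n/2)|\S^{n-1}|}$ equals $2^{-\mathbf{b}}$, which is exactly the factor produced in Step 2. This proves \eqref{Sec Def Q}.

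The only genuinely nontrivial step is Step 1, namely the closed form of the spherical average. I would first try the moment expansion together with the quadratic transformation. If citing that transformation is undesirable, the fallback is the ODE argument: both sides are analytic in $r^2$ near $0$, equal $1$ at $r=0$, and satisfy the same second-order hypergeometric equation, whose solution regular at $r=0$ is unique up to a constant.
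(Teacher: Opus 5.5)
Your argument is correct once you fix one misquoted identity. As written, that identity breaks Step 1. The quadratic transformation you use has prefactor $(1+z)^{-a}$, not $(1+z)^{-2a}$:
\[
{}_2F_1(a,b;a-b+1;z)=(1+z)^{-a}\,{}_2F_1\Big(\tfrac a2,\tfrac{a+1}{2};a-b+1;\tfrac{4z}{(1+z)^2}\Big).
\]
Comparing coefficients of $z$ confirms this. The left side gives $\frac{ab}{a-b+1}$. The version with $(1+z)^{-a}$ gives $-a+\frac{a(a+1)}{a-b+1}=\frac{ab}{a-b+1}$, whereas yours gives $-2a+\frac{a(a+1)}{a-b+1}$. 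Applied literally, your formula turns the moment series into $(1+r^2)^{s}\,{}_2F_1(s,s-\frac n2+1;\frac n2;r^2)$, which is not the claim. With the correct exponent $-a=-s$, the factor $(1+r^2)^{-s}$ from your moment expansion cancels exactly and Step 1 holds. Steps 2 and 3 are fine: the Euler transformation \eqref{Pre 1.4} gives $c-a-b=\mathbf a-1$, $c-a=\frac{\mathbf a}{2}$, $c-b=\frac{n+\mathbf a}{2}-1$, and the constant equals $2^{-\mathbf b}$.

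A smaller point concerns your ODE fallback. The identity $\Delta_\xi|\xi-\eta|^{-2s}=2s(2s+2-n)|\xi-\eta|^{-2s-2}$ only gives $A_s''+\frac{n-1}{r}A_s'=2s(2s+2-n)A_{s+1}$, which is not a closed equation. You also need $rA_s'=-sA_s+s(1-r^2)A_{s+1}$, obtained by applying $r\partial_r$ to the kernel, to eliminate $A_{s+1}$. In $z=r^2$ this then gives the hypergeometric equation with parameters $(s,s+1-\frac n2;\frac n2)$.

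The paper does not prove this lemma; it imports it from the cited references. The key identity of your Step 1 does appear in the paper, in the proof of Lemma \ref{Sec 3 Lem1}. There $\int_{\S^{n-1}}(1-2a\cdot\eta+|a|^2)^{-\lambda}\ud\mu$ is reduced by Funk--Hecke and a linear substitution to an Euler integral. This gives $(1-r)^{-2\lambda}{}_2F_1(\lambda,\frac{n-1}{2};n-1;-\frac{4r}{(1-r)^2})$. A different quadratic transformation (Gradshteyn--Ryzhik 9.134-3) then yields ${}_2F_1(\lambda,\lambda-\frac n2+1;\frac n2;|a|^2)$, which is your formula with $\lambda=s$.

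Your moment expansion reaches the same identity more elementarily. It needs only $\int\eta_n^{2j}\ud\mu=\frac{(1/2)_j}{(n/2)_j}$ and Pochhammer duplication, with no Beta-function or Gamma-duplication bookkeeping for the constant. Exchanging sum and integral is justified because $\frac{2r}{1+r^2}<1$. Combined with \eqref{Pre 1.4}, your route makes Lemma \ref{Lem 2.1} self-contained within the paper. It also shows the identity holds for all real $\mathbf a,\mathbf b$ and $|\xi|<1$; the constraint \eqref{Index Condi} plays no role in it.
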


\begin{lem}\label{Lem 2.2}
Let \(h\in L^1(\B^n)\). Suppose that
$
f_t=1+tg_t,|g_t|\leq M,
$
and that \(g_t\to g\) pointwise as \(t\to0^+\). Then, for any \(k>0\),
\begin{align*}
    hf_t^{\frac{k}{t}}\ud \xi\to he^{kg}\ud \xi \qquad\mathrm{in~ measure~ sense}.
\end{align*}
\end{lem}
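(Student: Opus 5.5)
The claim concerns Lemma \ref{Lem 2.2}. The plan is to show that, with $h\in L^1(\B^n)$, the quantity $h f_t^{k/t}\,\ud\xi$ converges to $h e^{kg}\,\ud\xi$ in the sense of measures. The route is pointwise convergence of the densities, followed by a uniform bound so that dominated convergence applies.

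First, the pointwise limit. Fix $\xi$. Since $|t g_t(\xi)|\le Mt$, for $t<1/(2M)$ we have $f_t(\xi)\ge 1/2>0$, so the power $f_t^{k/t}$ is well defined and
\begin{align*}
f_t(\xi)^{k/t}=\exp\Big(\frac{k}{t}\log\big(1+tg_t(\xi)\big)\Big).
\end{align*}
Write $\log(1+x)=x+R(x)$ with $|R(x)|\le C x^2$ for $|x|\le 1/2$. Then
\begin{align*}
\frac{k}{t}\log(1+tg_t)=k g_t+\frac{k}{t}R(tg_t),\qquad \Big|\frac{k}{t}R(tg_t)\Big|\le CkM^2 t\to 0 .
\end{align*}
Because $g_t\to g$ pointwise and $\exp$ is continuous, $f_t^{k/t}\to e^{kg}$ pointwise.

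Second, the domination. From $\log(1+x)\le x$ we get
\begin{align*}
0\le f_t^{k/t}\le e^{k g_t}\le e^{kM},
\end{align*}
so $|h f_t^{k/t}|\le e^{kM}|h|\in L^1(\B^n)$.

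Finally, the conclusion. By dominated convergence, $h f_t^{k/t}\to h e^{kg}$ in $L^1(\B^n)$. In particular, for every bounded measurable test function $\varphi$,
\begin{align*}
\int \varphi\, h f_t^{k/t}\,\ud\xi\to\int\varphi\, h e^{kg}\,\ud\xi .
\end{align*}
This is convergence in total variation, which implies the asserted convergence in measure sense (weak, vague, or setwise convergence of the measures). The same argument works along any sequence $t_j\to 0^+$, which handles the continuum parameter.

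The only point needing care is the positivity of $f_t$ for small $t$, so that the power is defined. It is guaranteed by the uniform bound $|g_t|\le M$, which also supplies both the Taylor remainder estimate and the integrable majorant.
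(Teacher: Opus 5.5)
Your argument is correct. Positivity of $f_t$ for $t<1/(2M)$, the expansion $\log(1+x)=x+O(x^2)$ for the pointwise limit, the majorant $0\le f_t^{k/t}\le e^{kg_t}\le e^{kM}$ from $\log(1+x)\le x$, and dominated convergence together give $hf_t^{k/t}\to he^{kg}$ in $L^1(\B^n)$. This covers any reading of ``in measure sense'', whether setwise, weak, or convergence in measure of the densities. The paper states this lemma without proof and simply appeals to dominated convergence when it applies the lemma in the proof of Theorem~\ref{Sec 2 thm}, so your proof supplies exactly the intended argument.
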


		To obtain a sharp exponential-type inequality, an effective approach is to pass to the limit in a sharp polynomial-type inequality. This idea goes back to Beckner \cite{Beckner}; see also \cite{Chen Shibing,Dolbeault&Esteban&Figalli&Frank&Loss}. Here we adopt a similar strategy. A crucial point is to choose the parameters appropriately so that $p,q\to +\infty$. Therefore, we can directly  prove the proof of the inequality part in Theorem \ref{Thm 1}.

		\begin{thm}\label{Sec 2 thm}
			For any $f\in L^{1}(\S^{n-1})$ with $n\geq 2$ and $\sigma\in\left(\frac{1}{2},\frac{2n-1}{2(n-1)}\right)$, we have 
			\begin{align}\label{Sec 2 Ineq}
				\left(\int_{\B^n}e^{m_{\sigma}P(f)(\xi)}\ud \nu\right)^{\frac{1}{m_{\sigma}}}\leq \int_{\S^{n-1}}e^{f}\ud \mu,
			\end{align}
			where $m_{\sigma}=\frac{n}{(2\sigma-1)(n-1)}$ and 
			\begin{align*}
				\ud \nu=\frac{(1-|\xi|^2)^{\frac{2n(1-\sigma)}{2\sigma-1}}e^{\frac{2n}{2\sigma-1}I(|\xi|^2)}\ud \xi}{\int_{\B^n}(1-|\xi|^2)^{\frac{2n(1-\sigma)}{2\sigma-1}}e^{\frac{2n}{2\sigma-1}I(|\xi|^2)}\ud \xi },\qquad  I(x)= \sum_{k=1}^{+\infty}\frac{\left(1-\frac{n}{2}\right)_k}{\left(\frac{n}{2}\right)_k}\frac{x^{k}}{2k}.
			\end{align*}
		\end{thm}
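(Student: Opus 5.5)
The plan is to obtain \eqref{Sec 2 Ineq} as a limit of the sharp inequality \eqref{Intro HLS} of Theorem B, along a one-parameter path $(\mathbf{a},\mathbf{b})=(\mathbf{a}_t,\mathbf{b}_t)$ on which $p,q\to\infty$ and $q/p\to m_\sigma$. With $t\to0^+$, I choose
\begin{align*}
\mathbf{a}_t=2-n+t,\qquad \mathbf{b}_t=n-1-\sigma t,
\end{align*}
so that $p=\frac{2(n-1)}{t}$, $q=\frac{2n}{(2\sigma-1)t}$ and $\mathbf{a}_t+\mathbf{b}_t-1=(1-\sigma)t$.

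First I check that \eqref{Index Condi} holds for small $t$. Clearly $\mathbf{b}_t\ge0$ and $\mathbf{a}_t+\mathbf{b}_t=1+(1-\sigma)t>0$. The condition $\mathbf{a}_t+\mathbf{b}_t<n-\mathbf{b}_t=1+\sigma t$ is equivalent to $\sigma>\frac12$. The last condition reads
\begin{align*}
\frac{(2\sigma-1)t}{2n}+\frac{2n-2-t}{2(n-1)}<1,
\end{align*}
which is equivalent to $\sigma<\frac{2n-1}{2(n-1)}$. So the admissible range of $\sigma$ is exactly the one in the statement, which supports this choice of path.

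Next, for bounded $f$ I apply \eqref{Intro HLS}, with the sharp constant \eqref{Intro const}, to $F_t=e^{tf/(2(n-1))}$. Then $\|F_t\|_{L^p(\S^{n-1})}=\left(\int e^f\ud\mu\right)^{t/(2(n-1))}$, and the inequality becomes
\begin{align*}
\Big\|\frac{Q_{\mathbf{a}_t,\mathbf{b}_t}(F_t)}{Q_{\mathbf{a}_t,\mathbf{b}_t}(1)}\cdot\frac{Q_{\mathbf{a}_t,\mathbf{b}_t}(1)}{\|Q_{\mathbf{a}_t,\mathbf{b}_t}(1)\|_{L^q}}\Big\|_{L^q(\B^n)}\le\Big(\int_{\S^{n-1}}e^f\ud\mu\Big)^{\frac{t}{2(n-1)}}.
\end{align*}
I raise both sides to the power $2(n-1)/t$ and treat the two factors separately.

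For the first factor, the kernel $H$ tends, as $t\to0$, to a constant times the hyperbolic Poisson kernel $\big(\frac{1-|\xi|^2}{|\xi-\eta|^2}\big)^{n-1}$, and that kernel reproduces constants. Hence
\begin{align*}
\frac{Q(F_t)}{Q(1)}=1+t\,g_t,\qquad g_t\to\frac{P(f)}{2(n-1)}\ \text{pointwise},\qquad |g_t|\le C\|f\|_\infty .
\end{align*}
Raising to the power $q=\frac{2n}{(2\sigma-1)t}$ then produces $e^{m_\sigma P(f)}$.

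For the second factor I use Lemma \ref{Lem 2.1}. The factor $(1-|\xi|^2)^{(1-\sigma)tq}$ equals exactly $(1-|\xi|^2)^{\frac{2n(1-\sigma)}{2\sigma-1}}$. For the hypergeometric factor, the parameter $\frac{n+\mathbf{a}_t}{2}-1=\frac t2$ tends to $0$, and $(t/2)_k\approx \frac t2(k-1)!$. This gives
\begin{align*}
{}_2F_1\Big(\tfrac t2,\tfrac{2-n+t}{2};\tfrac n2;x\Big)=1+t\,I(x)+o(t),
\end{align*}
and its $q$-th power tends to $e^{\frac{2n}{2\sigma-1}I(x)}$. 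The multiplicative constants cancel against the normalizing $L^q$ norm, which produces exactly the probability measure $\ud\nu$.

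To pass to the limit I use Lemma \ref{Lem 2.2} for the measure convergence, and Fatou's lemma on the left-hand side, since only a lower bound on the limit of the left side is needed. I also need the normalizing integrals $\int (Q(1)/c)^q$ to converge to the denominator of $\ud\nu$. This holds by dominated convergence, because $(1-|\xi|^2)^{2n(1-\sigma)/(2\sigma-1)}\in L^1$ precisely in our range of $\sigma$, while the ${}_2F_1$ factor stays bounded.

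Finally, for general $f\in L^1$ I truncate, $f_k=\max(\min(f,k),-k)$. I pass to the limit using monotone convergence separately on the positive and negative parts: $P$ is positive and linear, and Fatou handles the left side, whose integrand is nonnegative. If $\int e^f=\infty$ there is nothing to prove.

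The main obstacle I expect is the uniformity of the expansions near $|\xi|=1$. The claims $Q(F_t)/Q(1)=1+tg_t$ with $g_t$ bounded, and the expansion of ${}_2F_1$ at $x\to1$, need this uniformity. Since the ${}_2F_1$ argument is evaluated at $|\xi|^2<1$ and the limit at $x=1$ is finite by \eqref{Pre 1.5}, the bound $|g_t|\le C\|f\|_\infty$ should follow from $\min F_t\le F_t\le\max F_t$. Pointwise convergence of $g_t$ only requires convergence of the normalized kernels at each fixed $\xi$, and I expect that to be routine.
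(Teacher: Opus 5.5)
Your proposal is correct and follows the paper's proof essentially step for step. It uses the same path $\mathbf{a}=2-n+t$, $\mathbf{b}=n-1-\sigma t$, the same test function $e^{tf/(2(n-1))}$, the same first-order expansion of ${}_2F_1(\tfrac t2,\tfrac{2-n+t}{2};\tfrac n2;x)$ producing $I$, and the same truncation-plus-Fatou step for $f\in L^1$. Raising to the power $2(n-1)/t$, so that the right side is exactly $\int_{\S^{n-1}}e^f\ud\mu$ for every $t$, is a tidier version of the paper's $l'_+(0)\le 0$ argument, and your explicit check of \eqref{Index Condi} is a useful addition.

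The one point you leave too vague is the uniform bound ${}_2F_1(\tfrac t2,\tfrac{2-n+t}{2};\tfrac n2;x)\le 1+Ct$ for $x\in[0,1]$. Dominated convergence for the normalizing integral needs exactly this, because the factor is raised to the power $q_t\sim 1/t$; finiteness at $x=1$ via \eqref{Pre 1.5} does not give it. The paper obtains it from the termwise coefficient bound $Ck^{-n+\varepsilon}$. For $n\ge3$ it also follows at once from Euler's integral \eqref{Pre 1.8}, which shows $0<{}_2F_1\le 1$.
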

		\begin{rem}\label{Sec 2 rem}
			By the definition of the Gamma function and rising Pochhammer symbol, for $n$ is odd, we may rewrite $I(1)$ as
\begin{align*}
	I(1)
	=
	\sum_{k=1}^{+\infty}
	\frac{\Gamma(\frac{n}{2})}{\Gamma(k+\frac{n}{2})}
	\frac{\Gamma(k+1-\frac{n}{2})}{\Gamma(1-\frac{n}{2})}
	\frac{1}{2k}.
\end{align*}
Using Stirling's formula, we have
\begin{align}\label{Gamma asy}
	\frac{\Gamma(k+\alpha)}{\Gamma(k+\beta)}
	=
	O\bigl(k^{\alpha-\beta}\bigr)
	\qquad \text{as }\qquad k\to+\infty.
\end{align}
It follows that $I(x)$ is a bounded function in $[0,1]$. For $n$ is even, the summation in $I$ is finite, therefore it must be bounded. 
		\end{rem}
		
		\begin{pf}
			
			For $t\to 0^{+}$, set $\mathbf{a}=2-n+t$ and $\mathbf{b}=n-1-\sigma t$ in Lemma \ref{Lem 2.1}, 
			then 
			\begin{align*}
				p_t=\frac{2(n-1)}{t}, \qquad q_t=\frac{2n}{(2\sigma-1)t}.
			\end{align*}
			To preserve the constraint \eqref{Index Condi} in Theorem B 
 \ref{thm B}, we need to restrict $\sigma$ in the range
			\begin{align*}
				\frac{1}{2}<\sigma<\frac{2n-1}{2(n-1)}.
			\end{align*} 
			We also consider the corresponding perturbation operator
			\begin{align*}
				Q_t(u)(\xi)=\int_{\S^{n-1}}H_t(\xi,\eta)\,u(\eta)\,
				\ud \mu(\eta),
			\end{align*}
			where
			\begin{align*}
				H_t(\xi,\eta)=\left(\frac{1-|\xi|^2}{2}\right)^{n-1-\sigma t}|\xi-\eta|^{-2(n-1)+t}.
			\end{align*}
			We introduce the normalized measure
			\begin{align*}
				\ud \nu_t=\frac{Q^{q_t}_{t}(1)(\xi)}{\int_{\B^n}Q^{q_t}_{t}(1)}\ud \xi
				,
			\end{align*}
			then we can rewrite Theorem B \ref{thm B} or \cite[Theorem 1.1]{Gluck} as follows
			\begin{align}\label{Sec 2 thm equ-d}
				\|Q_{t}(u)/Q_t(1)\|_{L^{q_t}(\B^{n},\ud \nu_t)}\leq \|u\|_{L^{p_t}(\S^{n-1},\ud \mu) }.
			\end{align}
				Firstly, we need to claim 
			\begin{align*}
				\ud \nu_t\to 	\ud \nu=\frac{e^{\frac{2n}{2\sigma-1}\phi}}{\int_{\B^n} e^{\frac{2n}{2\sigma-1}\phi}}\ud \xi,
			\end{align*}
            	where
			\begin{align}\label{Sec 2 phi def}
				\phi(\xi)=(1-\sigma)\log(1-|\xi|^2)+ \sum_{k=1}^{+\infty}\frac{\left(1-\frac{n}{2}\right)_k}{\left(\frac{n}{2}\right)_k}\frac{|\xi|^{2k}}{2k}.
			\end{align}
			From the  formula \eqref{Sec Def Q} in Lemma \ref{Lem 2.1} , it follows that 
			 \begin{align*}
				Q_t(1)(\xi)
				=&\frac{\pi^{\frac{n}{2}}2^{2-n+\sigma t}}{\Gamma\!\left(\frac{n}{2}\right)|\S^{n-1}|}
				(1-|\xi|^2)^{(1-\sigma)t}
				{}_2F_1\!\left(\frac{t}{2},\frac{2-n+t}{2}; \frac{n}{2}; |\xi|^2\right)
			\end{align*}
			Noticing the inequality
			\begin{align*}
				\frac{n}{2}>\frac{t}{2}+\frac{2-n+t}{2}
			\end{align*}
		using \eqref{Pre 1.5}, 	hence the 
			hypergeometric function is convergent absolutely. 
Let \(z=|\xi|^2\in[0,1]\). For \(t\neq0\), by the series expansion of the hypergeometric function, we have
\[
\begin{aligned}
\frac{{}_2F_1\!\left(\frac t2,\frac{2-n+t}{2};\frac n2;z\right)-1}{t}
 =
\frac12\sum_{k=1}^{\infty}
\frac{\left(1+\frac t2\right)_{k-1}
\left(1-\frac n2+\frac t2\right)_k}
{\left(\frac n2\right)_k}
\frac{z^k}{k!}.
\end{aligned}
\]
Indeed, this follows from
\[
\left(\frac t2\right)_k
=
\frac t2\left(1+\frac t2\right)_{k-1},
\qquad k\ge1.
\]
For each fixed \(k\ge1\), the \(k\)-th term converges, as \(t\to0\), to
\[
\frac12
\frac{(1)_{k-1}\left(1-\frac n2\right)_k}
{\left(\frac n2\right)_k}
\frac{z^k}{k!}
=
\frac12
\frac{\left(1-\frac n2\right)_k}
{\left(\frac n2\right)_k}
\frac{z^k}{k}.
\]
It remains to justify the passage of the limit through the series. Choose
\(\varepsilon>0\) sufficiently small so that \(\varepsilon<n-1\). By the standard estimate for ratios of Gamma functions, see \eqref{Gamma asy},  uniformly for \(|t|\le \varepsilon\),
\[
\left|
\frac{\left(1+\frac t2\right)_{k-1}
\left(1-\frac n2+\frac t2\right)_k}
{\left(\frac n2\right)_k k!}
\right|
\le C k^{-n+\varepsilon},
\qquad k\ge1.
\]
Here the constant \(C\) is independent of \(t,k\) and \(z\in[0,1]\). Since
\[
\sum_{k=1}^{\infty} k^{-n+\varepsilon}<\infty,
\]
the above series is dominated by a summable sequence. Hence, by the dominated convergence theorem for series,
\begin{align}\label{Hyper equ-a}
\left.\frac{\ud}{\ud t}\right|_{t=0}
{}_2F_1\!\left(\frac t2,\frac{2-n+t}{2};\frac n2;z\right)
=
\frac12\sum_{k=1}^{\infty}
\frac{\left(1-\frac n2\right)_k}
{\left(\frac n2\right)_k}
\frac{z^k}{k}.
\end{align}
In particular, the derivative exists for every \(z=|\xi|^2\in[0,1]\), that is, for every \(|\xi|\le1\).
 It means that
			\begin{align*}
				{}_2F_1\!\left(\frac{t}{2},\frac{2-n+t}{2}; \frac{n}{2}; |\xi|^2\right)=1+tg_t,\quad \mathrm{where}\quad g_t\to \sum_{k=1}^{+\infty}\frac{\left(1-\frac{n}{2}\right)_k}{\left(\frac{n}{2}\right)_k}\frac{|\xi|^{2k}}{2k},\quad |g_t|\leq M.
			\end{align*}
           Then, using Lemma \ref{Lem 2.2}, we can see
			\begin{align*}
				\ud \nu_t=\frac{Q^{q_t}_{t}(1)(\xi)}{\int_{\B^n}Q^{q_t}_{t}(1)}\ud \xi=&\frac{(1-|\xi|^2)^{\frac{2n(1-\sigma)}{2\sigma-1}}\left(1+tg_t \right)^{q_t}}{\int_{\B^n}(1-|\xi|^2)^{\frac{2n(1-\sigma)}{2\sigma-1}}\left(1+tg_t \right)^{q_t}\ud \xi}\ud \xi
                \to \frac{e^{\frac{2n}{2\sigma-1}\phi}}{\int_{\B^n} e^{\frac{2n}{2\sigma-1}\phi}}\ud \xi=\ud \nu,
			\end{align*}
			here we have used $\frac{2n(1-\sigma)}{2\sigma-1}>-1$ and dominated convergence theorem.

			\textbf{Case 1}: $f\in L^{\infty}(\S^{n-1})$.  Letting $u=e^{\frac{t}{2(n-1)}f}\in L^{p_t}(\S^{n-1})$, then we can compute the asymptotic behaviour as follows:
			\begin{align*}
				\|u\|_{L^{p_t}(\S^{n-1},\ud \mu) }=\left(\int_{\S^{n-1}}e^{f}\ud \mu\right)^{\frac{t}{2(n-1)}}=1+\frac{t}{2(n-1)}\log \left(\int_{\S^{n-1}}e^{f}\ud \mu\right)+o(t),
			\end{align*}
			and 
			\begin{align*}
				\frac{Q_t(u) }{Q_t(1)}=&\frac{Q_t(1+\frac{tf}{2(n-1)}+o(t))}{Q_t(1)}=1+\frac{t}{2(n-1)}\frac{Q_t(f)}{Q_t(1)}+o(t)
				=:1+t\tilde{g}_t+o(t),
			\end{align*}
			where $|\tilde{g}_t|\leq \frac{1}{2(n-1)}\|f\|_{L^{\infty}(\S^{n-1})}$ and $\tilde{g}_t\to \tilde{g}_0:= \frac{1}{2(n-1)}\frac{Q_0(f)}{Q_0(1)}$, again using Lemma \ref{Lem 2.2}, we can get 
            \begin{align*}
                (1+t\tilde{g}_t+o(t))^{q_t}\ud \nu_t\to e^{\frac{2n\tilde{g}_0}{2\sigma-1}}\ud \nu\end{align*}
           Therefore, we can get 
			\begin{align*}
				\lim_{t\to 0}\frac{\|\frac{Q_t(u) }{Q_t(1)}\|_{L^{q_t}(\B^{n},\ud \nu_t)}-1}{t}=&\lim_{t\to 0} \frac{\exp\left(\frac{(2\sigma-1)t}{2n}\log\int_{\B^n}(1+t\tilde{g}_t+o(t))^{q_t}\ud \nu_t\right)-1}{t}\\
                =&\frac{(2\sigma-1)}{2n}\log\int_{\B^n}e^{\frac{2n\tilde{g}_0}{2\sigma-1}}\ud \nu.
			\end{align*}
            For $t\geq 0$, we define the function
            \begin{align*}
                l(t)=\|Q_{t}(e^{\frac{t}{2(n-1)}f})/Q_t(1)\|_{L^{q_t}(\B^{n},\ud \nu_t)}-\|e^{\frac{t}{2(n-1)}f}\|_{L^{p_t}(\S^{n-1},\ud \mu) },        \end{align*}
		from \eqref{Sec 2 thm equ-d}, we obtain $l(t)\leq 0$ and $l(0)=0$. Therefore, it is not hard to see $l_{+}'(0)\leq 0$, then	it follows that 
			\begin{align*}
				\frac{(2\sigma-1)(n-1)}{n}\log \int_{\B^n}e^{\frac{nQ_0(f)}{(2\sigma-1)(n-1)Q_0(1)}}\ud \nu\leq \log \left(\int_{\S^{n-1}}e^{f}\ud \mu\right).
			\end{align*}
			Set $m_{\sigma}=\frac{n}{(2\sigma-1)(n-1)}$, then 
			\begin{align*}
				\left(\int_{\B^n}e^{\frac{m_{\sigma}Q_0(f)}{Q_0(1)}}\ud \nu\right)^{\frac{1}{m_{\sigma}}}\leq \int_{\S^{n-1}}e^{f}\ud \mu,
			\end{align*}
		and the desired inequality follows by 
		\begin{align*}
			\frac{Q_0(f)}{Q_0(1)}=\int_{\S^{n-1}}\left(\frac{1-|\xi|^2}{|\xi-\eta|^2}\right)^{n-1}f(\eta)\ud \mu(\eta).
		\end{align*}
				\textbf{Case 2}: $f\in L^{1}(\S^{n-1})$. Without loss of generality, we can also assume $e^{f}\in L^1(\S^{n-1})$. Then, we define the approximate sequence
				\begin{align*}
					f_k=\max\{-k,\min\{f,k\}\}.
				\end{align*}
				Therefore, it follows that $\|f_k-f\|_{L^1(\S^{n-1})}\to 0$ as $k\to+\infty$, moreover we also estimate
				\begin{align*}
					\int_{\S^{n-1}}e^{f_k}\ud \mu\leq \int_{\S^{n-1}}e^{f}\ud\mu+\mu(\{f<-k\})e^{-k}.
				\end{align*}
				Moreover, we also have $P(f_k)\to P(f)$ pointwise, applying Fatou's Lemma, 
				\begin{align*}
				\left(\int_{\B^n}e^{m_{\sigma}P(f)(\xi)}\ud \nu\right)^{\frac{1}{m_{\sigma}}}	\leq& \liminf_{k\to+\infty}\left(\int_{\B^n}e^{m_{\sigma}P(f_k)(\xi)}\ud \nu\right)^{\frac{1}{m_{\sigma}}}\leq \liminf_{k\to+\infty}\int_{\S^{n-1}}e^{f_k}\ud \mu \\
				\leq &\int_{\S^{n-1}}e^{f}\ud \mu +\liminf_{k\to+\infty}e^{-k}=\int_{\S^{n-1}}e^{f}\ud \mu.
				\end{align*}
				This completes the whole proof.
		\end{pf}

We emphasize that, although the inequality can be obtained through a limiting argument, its conformal invariance does not follow automatically from this approach. Owing to the complexity of the measure $\ud \nu$, the conformal invariance is itself a nontrivial issue. The remaining tasks are therefore to verify the conformal invariance of this exponential inequality and to classify all extremal functions in $L^{\infty}(\S^{n-1})$.

		\section{Conformally invariant properties}\label{Sec 3}
		\subsection{Conformal invariance in \texorpdfstring{$\B^n$}{Bn}}		
		In this subsection, we establish the conformal invariance of inequality \eqref{Sec 2 Ineq}. To achieve this, we begin by recalling the nontrivial conformal automorphisms of $\B^n$; for further background, see \cite{Schoen&Yau}. Fix $a\in \B^n$, and let $\Psi_{a}:\B^n\to \B^n$ be the conformal transformation defined by
\begin{align*}
	\Psi_{a}(\xi)
	=
	\frac{(1-|a|^2)(\xi-a)-|\xi-a|^2a}{1-2\xi\cdot a+|\xi|^2|a|^2},
\end{align*}
then $\Psi_a^{-1}=\Psi_{-a}$. This map satisfies the basic identities $\Psi_{a}(0)=-a$ and $\Psi_{a}(a)=0$. We write $\Psi_a|_{\S^{n-1}}$ for the boundary value of $\Psi_a$ on $\S^{n-1}$. The corresponding Jacobian determinants are given by
\begin{align*}
	\det \ud \Psi_{a}(\xi)
	=
	\left(\frac{1-|a|^2}{1-2a\cdot\xi+|a|^2|\xi|^2 }\right)^n,
	\qquad
	\det \ud \Psi_a|_{\S^{n-1}}(\eta)
	=
	\left(\frac{1-|a|^2}{1-2a\cdot\eta+|a|^2 }\right)^{n-1}.
\end{align*}
For convenience, given $\Psi_a$ and $\xi,\xi'\in \B^{n}$, we set
$
	[a,\xi]=1-2a\cdot\xi+|a|^2|\xi|^2.
$
Then the following standard identities hold:
\begin{align}\label{conformal equ-a}
	1-|\Psi_{a}(\xi)|^2
	=
	\frac{(1-|a|^2)(1-|\xi|^2)}{[a,\xi]},
\end{align}
and
\begin{align}\label{conformal equ-b}
	|\Psi_{a}(\xi)-\Psi_{a}(\xi')|^2
	=
	\frac{(1-|a|^2)^2|\xi-\xi'|^2}{[a,\xi][a,\xi']}.
\end{align}

In what follows, we only need to handle the case \(n\geq 3\), since the case \(n=2\) is covered by Remark \ref{Sec 3.1 rem}. We also recall the classical Funk--Hecke formula. Let
$
	K\in L^{1}\bigl((-1,1),(1-t^{2})^{(n-3)/2}\ud t\bigr).
$
Then the Funk--Hecke formula states (see \cite[Chapter 22]{Abramowitz&Stegun}) that
\begin{align}\label{Funk-Hecke}
	\int_{\S^{n-1}} K(\xi\cdot\eta) Y_l(\eta)\,\ud \mu(\eta)
	=
	\lambda_l|\S^{n-1}|^{-1}Y_l(\xi),
	\qquad
	Y_l \in \mathscr{H}_l,
\end{align}
where $\mathscr{H}_l$ denotes the space of spherical harmonics of degree $l$, and
\begin{align*}
	\lambda_l
	=
	(4\pi)^{\frac{n-2}{2}}
	\frac{\Gamma\left(\frac{n-2}{2}\right)l!}{\Gamma\left(l+n-2\right)}
	\int_{-1}^{1}
	K(t)C^{\frac{n-2}{2}}_l(t)(1-t^2)^{\frac{n-3}{2}}\,\ud t,
\end{align*}
with $C_l^{\frac{n-2}{2}}$ denoting the $l$-th Gegenbauer polynomial of order $\frac{n-2}{2}$. We begin with the following basic integral lemma.
		
	\begin{lem}\label{Sec 3 Lem1}
For any $a\in \B^n$, one has
\begin{align*}
	\int_{\S^{n-1}}\log(1-2a\cdot\eta+|a|^2)\,\ud \mu(\eta)
	=
	-2I(|a|^2).
\end{align*}
\end{lem}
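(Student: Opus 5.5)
The plan is to obtain the logarithmic integral by differentiating a family of power integrals with respect to the exponent. For $|a|<1$ and $s$ near $0$, set
\begin{align*}
	\Phi(s,a):=\int_{\S^{n-1}}|a-\eta|^{-2s}\,\ud\mu(\eta),
\end{align*}
and note that $|a-\eta|^2=1-2a\cdot\eta+|a|^2$ for $\eta\in\S^{n-1}$. Since $|a-\eta|\geq 1-|a|>0$, the integrand is smooth in $s$, uniformly in $\eta$. Differentiation under the integral sign is therefore legitimate, and gives
\begin{align*}
	-\partial_s\Phi(0,a)=\int_{\S^{n-1}}\log(1-2a\cdot\eta+|a|^2)\,\ud\mu(\eta).
\end{align*}
So it suffices to find $\Phi$ in closed form and differentiate at $s=0$.

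\textbf{Step 1: a closed form for $\Phi$.} I would read this off from Lemma \ref{Lem 2.1}. By the definition \eqref{Def Q},
\begin{align*}
	Q_{\mathbf{a},\mathbf{b}}(1)(\xi)=\left(\frac{1-|\xi|^2}{2}\right)^{\mathbf{b}}\int_{\S^{n-1}}|\xi-\eta|^{\mathbf{a}-n}\,\ud\mu(\eta).
\end{align*}
Comparing this with \eqref{Sec Def Q}, the factor involving $\mathbf{b}$ cancels, leaving
\begin{align*}
	\int_{\S^{n-1}}|\xi-\eta|^{\mathbf{a}-n}\,\ud\mu(\eta)=C\,(1-|\xi|^2)^{\mathbf{a}-1}\,{}_2F_1\!\left(\tfrac{n+\mathbf{a}}{2}-1,\tfrac{\mathbf{a}}{2};\tfrac n2;|\xi|^2\right).
\end{align*}
Applying the transformation \eqref{Pre 1.4} turns the right-hand side into $C\,{}_2F_1\!\left(1-\tfrac{\mathbf{a}}{2},\tfrac{n-\mathbf{a}}{2};\tfrac n2;|\xi|^2\right)$. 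Evaluating at $\xi=0$ forces $C=1$. Putting $\mathbf{a}=n-2s$ then gives
\begin{align*}
	\Phi(s,a)={}_2F_1\!\left(s,\,s+1-\tfrac n2;\,\tfrac n2;\,|a|^2\right).
\end{align*}
Lemma \ref{Lem 2.1} is only stated under \eqref{Index Condi}, although the $\xi$-identity does not involve $\mathbf{b}$. If this is a concern, I would instead verify the formula directly for small $s$. Both sides are radial in $a$, and one can compare Taylor coefficients in $|a|^2$. Expanding $|a-\eta|^{-2s}=(1-2a\cdot\eta+|a|^2)^{-s}$ by the generalized Gegenbauer generating function and integrating with Funk--Hecke \eqref{Funk-Hecke} leaves only the zonal mean, which produces exactly the coefficients $(s)_k(s+1-\frac n2)_k/((\frac n2)_k k!)$. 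Both sides are also real-analytic in $s$, so the identity extends to the whole range needed.

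\textbf{Step 2: differentiate at $s=0$.} This is the same computation as \eqref{Hyper equ-a}, with $t=2s$. Using $(s)_k=s(1+s)_{k-1}$, the derivative at $s=0$ of the $k$-th term is $\frac{(1-\frac n2)_k}{(\frac n2)_k}\frac{x^k}{k}$. Interchanging the derivative with the sum is justified by the domination $Ck^{-n+\varepsilon}$ from Remark \ref{Sec 2 rem}. Hence
\begin{align*}
	\partial_s\Phi(0,a)=\sum_{k\ge1}\frac{(1-\frac n2)_k}{(\frac n2)_k}\frac{|a|^{2k}}{k}=2I(|a|^2),
\end{align*}
and combining with the first display proves the lemma for $|a|<1$.

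\textbf{Main obstacle.} I expect the delicate points to be rigorously establishing the closed form for $\Phi$ when $s$ lies outside the parameter range of Lemma \ref{Lem 2.1}, and handling the boundary case $|a|=1$ if it is needed. For $|a|=1$ I would let $|a|\to1$. On the right, $I$ is continuous on $[0,1]$ by absolute convergence, as noted in Remark \ref{Sec 2 rem}. On the left, the log singularity at $\eta=a$ is integrable on $\S^{n-1}$ uniformly in $|a|\le1$, so the left side converges by dominated convergence, using a bound of the form $|\log|a-\eta||\lesssim 1+|\log|\tfrac{a}{|a|}-\eta||$.
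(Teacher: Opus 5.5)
Your proposal is correct and shares the paper's skeleton. Both arguments differentiate $\lambda\mapsto\int_{\S^{n-1}}(1-2a\cdot\eta+|a|^2)^{-\lambda}\,\ud\mu(\eta)$ at $\lambda=0$, identify this integral with ${}_2F_1(\lambda,\lambda+1-\frac n2;\frac n2;|a|^2)$, and then differentiate the series via \eqref{Hyper equ-a}.

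The difference lies in how the closed form is obtained. The paper derives it from scratch:
\begin{itemize}
\item the zonal (Funk--Hecke, $l=0$) reduction to an integral over $[-1,1]$;
\item the substitution $t=1-2s$;
\item Euler's integral \eqref{Pre 1.8}, which produces ${}_2F_1(\lambda,\frac{n-1}{2};n-1;-\frac{4r}{(1-r)^2})$;
\item the duplication formula and a quadratic transformation.
\end{itemize}
You instead read the closed form off from Lemma \ref{Lem 2.1} and then apply \eqref{Pre 1.4}. The $\mathbf{b}$-dependence cancels and the constant is $\frac{2\pi^{n/2}}{\Gamma(n/2)|\S^{n-1}|}=1$. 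With $\mathbf{a}=n-2s$ this does give exactly ${}_2F_1(s,s+1-\frac n2;\frac n2;|a|^2)$.

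Your route is shorter and reuses a formula already in the paper. However, it rests on an externally cited result that is stated only under \eqref{Index Condi}. The paper's computation is self-contained and valid for every $\lambda$.

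The restriction you worry about is in fact harmless, so the Gegenbauer fallback is not needed. Take $\mathbf{b}=0$ and $\mathbf{a}=n-2s$. Then \eqref{Index Condi} reduces to $0<s<\frac n2$ and $\frac sn+\frac{s}{n-1}<1$, which holds for all small $s>0$. You have already shown that $\Phi(\cdot,a)$ is differentiable at $s=0$, and both sides equal $1$ there. So the right-hand difference quotient alone identifies $\partial_s\Phi(0,a)$.

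Finally, the case $|a|=1$ is not part of the statement. For $|a|<1$, the interchange of $\partial_s$ and the sum is already justified by the geometric factor $|a|^{2k}$.
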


\begin{pf}
	The proof is based on a standard differentiation argument applied to a family of spherical averages. For $\lambda$ in a neighborhood of $0$, define
	\[
		L_{\lambda}(a)
		:=
		\int_{\S^{n-1}}(1-2a\cdot\eta+|a|^2)^{-\lambda}\,\ud \mu(\eta),
		\qquad |a|<1.
	\]
	Since the integrand depends smoothly on $\lambda$, differentiating at $\lambda=0$ gives
	\[
		\left.\frac{\ud}{\ud \lambda}\right|_{\lambda=0}L_{\lambda}(a)
		=
		-\int_{\S^{n-1}}\log(1-2a\cdot\eta+|a|^2)\,\ud \mu(\eta).
	\]
	Therefore, it remains to compute $L_{\lambda}(a)$ explicitly. Let $r=|a|$. By the standard formula for zonal integrals on the sphere, equivalently by the Funk--Hecke formula \eqref{Funk-Hecke} with $l=0$, we obtain
	\[
		L_{\lambda}(a)
		=
		\frac{\Gamma(\frac n2)}{\sqrt{\pi}\Gamma(\frac{n-1}{2})}
		\int_{-1}^1
		(1-2rt+r^2)^{-\lambda}(1-t^2)^{\frac{n-3}{2}}\,\ud t.
	\]
	To transform this integral into a hypergeometric form, we make the change of variables $t=1-2s$. Then
	\begin{align*}
		L_{\lambda}(a)
		&=
		\frac{\Gamma(\frac n2)}{\sqrt{\pi}\Gamma(\frac{n-1}{2})}
		2^{n-2}(1-r)^{-2\lambda}
		\int_0^1
		\left(1+\frac{4r}{(1-r)^2}s\right)^{-\lambda}
		s^{\frac{n-3}{2}}(1-s)^{\frac{n-3}{2}}\,\ud s.
	\end{align*}
	Now Euler's integral representation of the Gauss hypergeometric function (see \eqref{Pre 1.8}) yields
	\begin{align*}
		\int_0^1
		\left(1+\frac{4r}{(1-r)^2}s\right)^{-\lambda}
		s^{\frac{n-3}{2}}(1-s)^{\frac{n-3}{2}}\,\ud s
		=
		B\!\left(\frac{n-1}{2},\frac{n-1}{2}\right)
		{}_2F_1\!\left(\lambda,\frac{n-1}{2};n-1;-\frac{4r}{(1-r)^2}\right).
	\end{align*}
	Combining this with the prefactor and using the duplication formula for the Gamma function,
    \[\Gamma(2z)=\frac{2^{2z-1}}{\sqrt{\pi}}\Gamma(z)\Gamma(z+\frac{1}{2}),\]
    we simplify the constant term and arrive at
	\[
		L_{\lambda}(a)
		=
		(1-r)^{-2\lambda}
		{}_2F_1\!\left(\lambda,\frac{n-1}{2};n-1;-\frac{4r}{(1-r)^2}\right).
	\]
The next step is to apply the quadratic transformation for ${}_2F_1$ (cf.~\cite[p.~1018, Eq.~9.134--3]{Gradshteyn&Ryzhik}),
	\[
		(1-r)^{-2\alpha}\,
		{}_2F_1\!\left(\alpha,\beta;2\beta;-\frac{4r}{(1-r)^2}\right)
		=
		{}_2F_1\!\left(\alpha,\alpha-\beta+\frac12;\beta+\frac12;r^2\right).
	\]
	Taking $\alpha=\lambda$ and $\beta=\frac{n-1}{2}$, we obtain the more convenient expression
	\[
		L_{\lambda}(a)
		=
		{}_2F_1\!\left(\lambda,\lambda-\frac n2+1;\frac n2;|a|^2\right).
	\]
	Finally, differentiating with respect to $\lambda$ at $\lambda=0$, using \eqref{Hyper equ-a} and recalling the definition of $I$, we conclude that
	\[
		-\int_{\S^{n-1}}\log(1-2a\cdot\eta+|a|^2)\,\ud \mu(\eta)
		=
		2I(|a|^2).
	\]
	This is exactly the desired identity.
\end{pf}

	Using the above integral lemma, we can derive an explicit formula for the action of the Poisson kernel on the standard bubble.
		
		\begin{lem}\label{Conformal lem 2}
			For any $a\in \B^n$, there holds
			\begin{align*}
				\int_{\S^{n-1}}&\left(\frac{1-|\xi|^2}{|\xi-\eta|^2}\right)^{n-1}\log \left(\frac{1-|a|^2}{1-2a\cdot\eta+|a|^2}\right)
				\ud \mu(\eta)\\
				=&\log \left(\frac{1-|a|^2}{1-2a\cdot\xi+|a|^2|\xi|^2 }\right)+2I \left(|\Psi_{a}(\xi)|^2\right)-2I(|\xi|^2).
			\end{align*}
		\end{lem}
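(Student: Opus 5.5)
The plan is to avoid any harmonic-analysis computation and instead reduce everything to Lemma \ref{Sec 3 Lem1} through a Möbius change of variables on the sphere. First, for $\eta\in\S^{n-1}$ we have $1-2a\cdot\eta+|a|^2=|\eta-a|^2$. So the left-hand side equals
\[
\log(1-|a|^2)-\int_{\S^{n-1}}\left(\frac{1-|\xi|^2}{|\xi-\eta|^2}\right)^{n-1}\log|\eta-a|^2\,\ud\mu(\eta),
\]
where we used that the Poisson kernel has total mass one (this is $P(1)=1$). Since $|\xi-\eta|^2=[\xi,\eta]$ on the sphere, the kernel coincides with $\det\ud\Psi_\xi|_{\S^{n-1}}(\eta)$. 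We therefore substitute $\zeta=\Psi_\xi(\eta)$, i.e. $\eta=\Psi_{-\xi}(\zeta)$, and obtain
\[
\int_{\S^{n-1}}\left(\frac{1-|\xi|^2}{|\xi-\eta|^2}\right)^{n-1}g(\eta)\,\ud\mu(\eta)=\int_{\S^{n-1}}g(\Psi_{-\xi}(\zeta))\,\ud\mu(\zeta).
\]

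Next, write $a=\Psi_{-\xi}(b)$ with $b:=\Psi_\xi(a)$, and apply \eqref{conformal equ-b} to $\Psi_{-\xi}$:
\[
\log|\Psi_{-\xi}(\zeta)-a|^2=2\log(1-|\xi|^2)+\log|\zeta-b|^2-\log[-\xi,\zeta]-\log[-\xi,b].
\]
We average each term over $\zeta\in\S^{n-1}$. For the second term, $|\zeta-b|^2=1-2b\cdot\zeta+|b|^2$, so Lemma \ref{Sec 3 Lem1} gives $-2I(|b|^2)$. For the third term, $[-\xi,\zeta]=1+2\xi\cdot\zeta+|\xi|^2$ on the sphere, and Lemma \ref{Sec 3 Lem1} applied with $-\xi$ gives $-2I(|\xi|^2)$. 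The first and fourth terms are constant in $\zeta$.

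It remains to simplify the constants; this bookkeeping is the only delicate step. By \eqref{conformal equ-a},
\[
1-|b|^2=\frac{(1-|\xi|^2)(1-|a|^2)}{[\xi,a]},
\]
and this expression is symmetric in $a,\xi$, so $|b|^2=|\Psi_a(\xi)|^2$. Applying \eqref{conformal equ-a} again to $a=\Psi_{-\xi}(b)$ gives
\[
1-|a|^2=\frac{(1-|\xi|^2)(1-|b|^2)}{[-\xi,b]}.
\]
Combining the two identities yields $[-\xi,b]=(1-|\xi|^2)^2/[\xi,a]$, and $[\xi,a]=[a,\xi]$. Hence the spherical average of $\log|\Psi_{-\xi}(\zeta)-a|^2$ equals
\[
\log[a,\xi]-2I(|\Psi_a(\xi)|^2)+2I(|\xi|^2).
\]
Subtracting this from $\log(1-|a|^2)$ gives exactly the claimed formula.

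The main obstacle is justifying the change of variables when $\log|\eta-a|^2$ is only integrable rather than bounded. This causes no real difficulty: $a$ lies strictly inside the ball, so the integrand is smooth on $\S^{n-1}$. One must also check that $b=\Psi_\xi(a)$ lies in $\B^n$, so that Lemma \ref{Sec 3 Lem1} applies; this holds because $\Psi_\xi$ maps $\B^n$ onto itself.
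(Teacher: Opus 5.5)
Your argument is correct and is essentially the paper's proof: you use the same substitution $\zeta=\Psi_\xi(\eta)$, justified by the kernel being $\det \ud\Psi_\xi|_{\S^{n-1}}$, the same auxiliary point $\tilde a=\Psi_\xi(a)$ with $|\tilde a|=|\Psi_a(\xi)|$, and Lemma~\ref{Sec 3 Lem1} applied at $\tilde a$ and at $-\xi$. The only difference is bookkeeping: the paper applies \eqref{conformal equ-b} to $\Psi_\xi$ at the pairs $(a,\eta)$ and $(0,\eta)$ to obtain $|a-\eta|^2=|\tilde a-\zeta|^2[a,\xi]/|\zeta+\xi|^2$ directly, which avoids your separate computation of $[-\xi,b]$ via \eqref{conformal equ-a}.
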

		\begin{pf}
			
			We argue by making a suitable conformal change of variables on the sphere. Consider $\zeta=\Psi_{\xi}\big|_{\S^{n-1}}(\eta)$. Then 
			\begin{align*}
				\left(\frac{1-|\xi|^2}{|\xi-\eta|^2}\right)^{n-1}\ud \mu(\eta)=\Psi_{\xi}\big|_{\S^{n-1}}^{*}( \ud \mu)= \ud \mu(\zeta),
			\end{align*}
			so that
			\begin{align*}
				L:=&\int_{\S^{n-1}}\left(\frac{1-|\xi|^2}{|\xi-\eta|^2}\right)^{n-1}\log (1-2a\cdot\eta+|a|^2)
				\ud \mu(\eta)\\
				=&\int_{\S^{n-1}}\log |a-\eta(\zeta)|^2
				\ud \mu(\zeta).
			\end{align*}
			Set $\tilde{a}=\Psi_{\xi}(a)$. Using \eqref{conformal equ-b} and the identity $-\xi=\Psi_{\xi}(0)$, we obtain 
			\begin{align*}
				|\tilde{a}-\zeta|^2=\frac{(1-|\xi|^2)^2|a-\eta|^2}{[a,\xi]|\xi-\eta|^2} \qquad\mathrm{and}\qquad |\zeta+\xi|^2=\frac{(1-|\xi|^2)^2}{|\xi-\eta|^2},
			\end{align*}
			and hence
			\begin{align*}
				|a-\eta|^2=\frac{|\tilde{a}-\zeta|^2[a,\xi]}{|\zeta+\xi|^2}.
			\end{align*}
			Substituting this identity into the expression for $L$ and using Lemma \ref{Sec 3 Lem1}, we find
			\begin{align*}
				L=&\log [a,\xi]+\int_{\S^{n-1}}\log|\tilde{a}-\zeta|^2-\log |\zeta+\xi|^2\ud \mu(\zeta)\\
				=&\log [a,\xi]+2I(|\xi|^2)-2I(|\tilde{a}|^2).
			\end{align*}
			The final conclusion follows by $|\tilde{a}|=|\Psi_{\xi}(a)|=|\Psi_{a}(\xi)|$.
		\end{pf}
\begin{rem}\label{Sec 3.1 rem}
When $n=2$, we have $I=0$, and the above formula is trivial. Indeed, the hyperbolic harmonic extension reduces to the usual harmonic extension, while the right-hand side is harmonic. Hence the desired formula follows.
\end{rem}

For $f\in L^{\infty}(\S^{n-1})$, in order to preserve the conformal invariance of the integral $\int_{\S^{n-1}}e^{f}\ud \mu$, we introduce the conformal action of $\Psi_a$ on $f$ by
		\begin{align*}
			f_{\Psi_a|_{\S^{n-1}}}=f\circ \Psi_a|_{\S^{n-1}}+\log\det\ud \Psi_a|_{\S^{n-1}}.
		\end{align*}
When \(n=2\), we have \(I=0\). Using \eqref{conformal equ-a}, Remark \ref{Sec 3.1 rem}, and the formula for \(\det \ud \Psi_a\), we obtain \eqref{Sec 3.1 conformal}.
 Now, using Lemma \ref{Conformal lem 2}, we obtain the following conformal invariance property.
		\begin{thm}
			For any conformal transformation $\Psi_a$ on $\S^{n-1}$, one has
			\begin{align}\label{Sec 3.1 conformal}
				\int_{\B^n}e^{m_{\sigma}P(f_{\Psi_a|_{\S^{n-1}}})(\xi)}e^{\frac{2n}{2\sigma-1}\phi(\xi)}\ud \xi=\int_{\B^n}e^{m_{\sigma}P(f)(\xi)}e^{\frac{2n}{2\sigma-1}\phi(\xi)}\ud \xi,
			\end{align}
            where $\phi$ is given by \eqref{Sec 2 phi def}.
		\end{thm}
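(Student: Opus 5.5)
The plan is to compute $P(f_{\Psi_a|_{\S^{n-1}}})$ explicitly and then undo $\Psi_a$ by a change of variables in the ball integral. Write $f_{\Psi_a|_{\S^{n-1}}}=f\circ\Psi_a|_{\S^{n-1}}+(n-1)\log\frac{1-|a|^2}{1-2a\cdot\eta+|a|^2}$. By linearity of $P$ we treat the two pieces separately.

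\emph{First piece.} We use that $\Psi_a$ is an isometry of the hyperbolic metric $g_{\B}$. It therefore commutes with $\Delta_{\B}$, so $P(f)\circ\Psi_a$ is $g_{\B}$-harmonic with boundary value $f\circ\Psi_a|_{\S^{n-1}}$. Uniqueness of the bounded hyperbolic harmonic extension then gives $P(f\circ\Psi_a|_{\S^{n-1}})(\xi)=P(f)(\Psi_a(\xi))$. Alternatively, one can verify this directly from the kernel. Using \eqref{conformal equ-a}, \eqref{conformal equ-b} and the boundary Jacobian, one has
\[
\Bigl(\frac{1-|\xi|^2}{|\xi-\eta|^2}\Bigr)^{n-1}\ud\mu(\eta)=\Bigl(\frac{1-|\Psi_a\xi|^2}{|\Psi_a\xi-\Psi_a\eta|^2}\Bigr)^{n-1}\ud\mu(\Psi_a\eta).
\]

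\emph{Second piece.} Lemma \ref{Conformal lem 2} applies directly and gives
\[
P\Bigl(\log\det\ud\Psi_a|_{\S^{n-1}}\Bigr)(\xi)=(n-1)\Bigl[\log\frac{1-|a|^2}{[a,\xi]}+2I(|\Psi_a(\xi)|^2)-2I(|\xi|^2)\Bigr].
\]
Multiplying by $m_\sigma=\frac{n}{(2\sigma-1)(n-1)}$ gives
\[
m_\sigma P(f_{\Psi_a|_{\S^{n-1}}})(\xi)=m_\sigma P(f)(\Psi_a\xi)+\frac{n}{2\sigma-1}\log\frac{1-|a|^2}{[a,\xi]}+\frac{2n}{2\sigma-1}\bigl(I(|\Psi_a\xi|^2)-I(|\xi|^2)\bigr).
\]

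\emph{Combining with the weight.} Recall from \eqref{Sec 2 phi def} that
\[
e^{\frac{2n}{2\sigma-1}\phi(\xi)}=(1-|\xi|^2)^{\frac{2n(1-\sigma)}{2\sigma-1}}e^{\frac{2n}{2\sigma-1}I(|\xi|^2)}.
\]
In the product with the exponential above, the factor $e^{\frac{2n}{2\sigma-1}I(|\xi|^2)}$ cancels and is replaced by $e^{\frac{2n}{2\sigma-1}I(|\Psi_a\xi|^2)}$. Next, by \eqref{conformal equ-a}, $(1-|\xi|^2)=(1-|\Psi_a\xi|^2)\,[a,\xi]/(1-|a|^2)$. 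This rewrites the weight as $e^{\frac{2n}{2\sigma-1}\phi(\Psi_a\xi)}$ times $\bigl(\frac{1-|a|^2}{[a,\xi]}\bigr)^{-\frac{2n(1-\sigma)}{2\sigma-1}}$. The leftover power of $\frac{1-|a|^2}{[a,\xi]}$ has exponent
\[
\frac{n}{2\sigma-1}-\frac{2n(1-\sigma)}{2\sigma-1}=n .
\]
This is exactly $\det\ud\Psi_a(\xi)$. Hence the integrand on the left of \eqref{Sec 3.1 conformal} equals $e^{m_\sigma P(f)(\Psi_a\xi)}e^{\frac{2n}{2\sigma-1}\phi(\Psi_a\xi)}\det\ud\Psi_a(\xi)$. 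The change of variables $\zeta=\Psi_a(\xi)$, a diffeomorphism of $\B^n$, yields the right-hand side.

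\emph{Main obstacle.} The only delicate step is the bookkeeping of the exponents, namely checking that the powers of $(1-|a|^2)/[a,\xi]$ combine to exactly $n$. This is where the specific exponents $m_\sigma$ and $\frac{2n(1-\sigma)}{2\sigma-1}$ in the weight are forced. A secondary point is justifying that $P$ commutes with $\Psi_a$ for $f\in L^\infty$. I would handle this by the direct kernel identity above rather than by a PDE uniqueness argument. For $n=2$ the same computation goes through with $I\equiv0$, by Remark \ref{Sec 3.1 rem}.
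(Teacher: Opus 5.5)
Your proposal is correct and follows essentially the same route as the paper. The kernel identity gives $P(f\circ\Psi_a|_{\S^{n-1}})=P(f)\circ\Psi_a$, Lemma \ref{Conformal lem 2} handles the boundary Jacobian term, and \eqref{conformal equ-a} shows the leftover power of $(1-|a|^2)/[a,\xi]$ is exactly $n$, i.e.\ $\det\ud\Psi_a$, so a change of variables finishes; the paper merely packages this bookkeeping as the identity \eqref{Conformal thm equ-a}, whereas you rewrite the left-hand integrand directly as the pullback of the right-hand one.
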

		\begin{pf}
			Note that
			\begin{align*}
				P(f)(\Psi_a(\xi))=&\int_{\S^{n-1}}\left(\frac{1-|\Psi_a(\xi)|^2}{|\Psi_a(\xi)-\eta|^2}\right)^{n-1}f(\eta)\, 
				\ud \mu(\eta)\\
				=&\int_{\S^{n-1}}\left(\frac{1-|\Psi_a(\xi)|^2}{|\Psi_a(\xi)-\Psi_a|_{\S^{n-1}}(\eta)|^2}\right)^{n-1}f\circ\Psi_a|_{\S^{n-1}}(\eta)\det\ud \Psi_a|_{\S^{n-1}}\ud \mu(\eta).
			\end{align*}
			Applying \eqref{conformal equ-a}, \eqref{conformal equ-b}, and using
			$ \det\ud \Psi_a|_{\S^{n-1}}=(1-|a|^2)^{n-1}[a,\eta]^{1-n}$, we see that
			\begin{align*}
				\left(\frac{1-|\Psi_a(\xi)|^2}{|\Psi_a(\xi)-\Psi_a|_{\S^{n-1}}(\eta)|^2}\right)^{n-1}\det\ud \Psi_a|_{\S^{n-1}}=\left(\frac{1-|\xi|^2}{|\xi-\eta|^2}\right)^{n-1}.
			\end{align*}
			Thus, we conclude that
			\begin{align*}
				P(f\circ\Psi_a|_{\S^{n-1}})(\xi)=P(f)(\Psi_a(\xi)).
			\end{align*}
			Hence, the left-hand side becomes
			\begin{align*}
				&\int_{\B^n}e^{m_{\sigma}P(f_{\Psi_a|_{\S^{n-1}}})(\xi)}e^{\frac{2n}{2\sigma-1}\phi(\xi)}\ud \xi\\
				=&\int_{\B^n} e^{m_{\sigma}P(f)\circ \Psi_a+ \frac{n}{2\sigma-1}P\left(\log\frac{1-|a|^2}{1-2a\cdot\eta+|a|^2}\right)(\xi)}e^{\frac{2n}{2\sigma-1}\phi(\xi)}\ud \xi.
			\end{align*}
			On the other hand, the right-hand side can be written as
			\begin{align*}
				\int_{\B^n}e^{m_{\sigma}P(f)(\xi)}e^{\frac{2n}{2\sigma-1}\phi(\xi)}\ud \xi=\int_{\B^n}e^{m_{\sigma}P(f)\circ\Psi_a}e^{\frac{2n}{2\sigma-1}\phi\circ\Psi_a}\det \ud\Psi_a\ud \xi.
			\end{align*}
			Therefore, it remains to verify that
			\begin{align}\label{Conformal thm equ-a}
				\frac{n}{2\sigma-1}P\left(\log\frac{1-|a|^2}{1-2a\cdot\eta+|a|^2}\right)
				=\frac{2n}{2\sigma-1}(\phi\circ\Psi_a-\phi)+\log \det \ud\Psi_a.
			\end{align}
			By definition, and using \eqref{conformal equ-a}, the right-hand side is equal to
			\begin{align*}
				&\frac{2n}{2\sigma-1}\left(\phi\circ\Psi_a-\phi\right)+n\log\frac{1-|a|^2}{1-2a\cdot\xi+|a|^2|\xi|^2 }\\
				=&\frac{2n}{2\sigma-1}\left((1-\sigma)\log\frac{1-|\Psi_a(\xi)|^2}{1-|\xi|^2}+I(|\Psi_a(\xi)|^2)-I(|\xi|^2)\right)+n\log\frac{1-|a|^2}{1-2a\cdot\xi+|a|^2|\xi|^2 }\\
				=&\frac{n}{2\sigma-1}\log\frac{1-|a|^2}{1-2a\cdot\xi+|a|^2|\xi|^2 }+\frac{2n}{2\sigma-1}(I(|\Psi_a(\xi)|^2)-I(|\xi|^2) )\\
                =&\frac{n}{2\sigma-1}\left[\log\frac{1-|a|^2}{1-2a\cdot\xi+|a|^2|\xi|^2 }+2(I(|\Psi_a(\xi)|^2)-I(|\xi|^2) )\right].
			\end{align*}
			By Lemma \ref{Conformal lem 2}, this is exactly the left-hand side of \eqref{Conformal thm equ-a}. The proof is complete.
		\end{pf}

		\subsection{The weighted  Carleman inequality in \texorpdfstring{$\R^n_{+}$}{Rn+}}
		
		To classify all extremal functions, we need to transform the inequality to the upper half-space $\R^{n}_{+}$ and then apply the method of moving spheres to the Euler--Lagrange equation in $\R^{n}_{+}$. To this end, we first introduce the conformal map from the upper half-space to the unit ball. Let $\mathcal{S}: (\R^{n}_{+}, |\ud x|^2) \to (\B^{n},|\ud \xi|^2) $ 
		\begin{equation}\label{conf-map:ball_half-space}
			\xi=\mathcal{S}(x)=-\mathbf{e}_{n}+\frac{2(x+\mathbf{e}_{n})}{|x+\mathbf{e}_{n}|^2}\qquad\mathrm{where}\qquad x=(x',x_n),
		\end{equation}
		be a conformal map with the property that
		$$\mathcal{S}^\ast(|\ud \xi|^2)=\left(\frac{2}{|x+\mathbf{e}_{n}|^2}\right)^2 |\ud x|^2.$$
		Moreover, given two distinct points $x\neq y \in \R^n_{+}$, set $\xi=\mathcal{S}(x)$ and $\eta=\mathcal{S}(y)$. Then we have the following two useful identities:
		\begin{align}\label{Sec 4 equ-a}
			|\xi-\eta|^2= \frac{4|x-y|^2}{|x+\mathbf{e}_{n}|^2|y+\mathbf{e}_{n}|^2 }
		\end{align}
		and 
		\begin{align}\label{Sec 4 equ-b}
			1-|\xi|^2=\frac{4x_n}{|x+\mathbf{e}_{n}|^2}.
		\end{align}
		Clearly, $\mathcal{S}$ is the inversion with respect to the sphere centered at $-\mathbf{e}_{n}$ with radius $\sqrt{2}$, and its inverse satisfies $\mathcal{S}^{-1}=\mathcal{S}$. Given $f\in L^{\infty}(\S^{n-1})$, in order to preserve conformal invariance, we introduce the corresponding function on $\partial \R^n_{+}$ by
		\begin{align}\label{Sec 4 Def-u}
			u(x')=f\circ \mathcal{S}(x',0)+(n-1)\log\frac{2}{1+|x'|^2}:= f\circ \mathcal{S}(x',0)+(n-1)f_0,
		\end{align}
		Then
		\begin{align*}
			\int_{\R^{n-1}}e^{u(x')}\ud x'=|\S^{n-1}|\int_{\S^{n-1}}e^{f}\ud \mu.
		\end{align*}
		By the conformal properties of $\mathcal{S}$, together with \eqref{Sec 4 equ-a} and \eqref{Sec 4 equ-b}, we obtain
		\begin{align*}
			P(f)\circ \mathcal{S}(x)=&\frac{1}{|\S^{n-1}|}\int_{\R^{n-1}}\left(\frac{1-|\xi|^2}{|\xi-\eta|^2}\right)^{n-1}f\circ \mathcal{S}(y',0)\left(
			\frac{2}{1+|y'|^2}\right)^{n-1}\, 
			\ud y'\\
			=&\frac{2^{n-1}}{|\S^{n-1}|}\int_{\R^{n-1}}\left(\frac{x_n}{|x'-y'|^2+x_n^2}\right)^{n-1}f\circ \mathcal{S}(y',0)\, 
			\ud y'.
		\end{align*}
		Therefore, we conclude that
		\begin{align}\label{P-P}
		    P(f)\circ \mathcal{S}=\mathcal{P}(f\circ \mathcal{S}),
				\end{align}
		where $\mathcal P$ is the Poisson operator on $\R^n_{+}$ defined by
		\begin{align}\label{Sec 4 Def Pu}
			\mathcal{P}(u)(x',x_n)=\frac{2^{n-1}}{|\S^{n-1}|}\int_{\R^{n-1}}\left(\frac{x_n}{|x'-y'|^2+x_n^2}\right)^{n-1}u(y')\ud y',
		\end{align}
		and it is not difficult to verify that $\mathcal{P}(u)$ satisfies the following degenerate elliptic equation in $\R^n_{+}$:
		\begin{align}\label{Sec 4 PDE}
			\begin{cases}
				\displaystyle	\Delta \mathcal{P}(u)+\frac{2-n}{x_n}\partial_{x_n}\mathcal{P}(u)=0;\qquad&\mathrm{in}\qquad \R^n_{+},\\
				\displaystyle \mathcal{P}(u)(x',0)=u(x')\qquad&\mathrm{on}\qquad \partial\R^n_{+}.
			\end{cases}
		\end{align}
We note that, although the PDE \eqref{Sec 4 PDE} is degenerate at \(x_n=0\), the strong maximum principle still implies that, if \(\mathcal{P}(u)\not\equiv \mathrm{const}\), then a local minimum of the solution cannot be attained in the interior of \(\R^n_{+}\).

		\begin{lem}\label{Sec 3 Equ Rn Lem}
		Let $u$ be defined by \eqref{Sec 4 Def-u}, and let $\mathcal{P}(u)$ be defined by \eqref{Sec 4 Def Pu}. Then Theorem \ref{Sec 2 thm} is equivalent to
			\begin{align}\label{Sec 3 Equ Rn Lem euq}
			\left(\int_{\R_{+}^n}e^{m_{\sigma}\mathcal{P}(u)(x)}x_n^{\frac{2n(1-\sigma)}{2\sigma-1}}\ud x\right)^{1/m_{\sigma}}\leq c_{\sigma}\int_{\R^{n-1}}e^{u(x')}\ud x',
        \end{align}
		where $m_{\sigma}=\frac{n}{(2\sigma-1)(n-1)}$, $\sigma\in\left(\frac{1}{2},\frac{2n-1}{2(n-1)}\right)$, and the sharp constant $c_{\sigma}$ is given by
		\begin{align*}
			c_{\sigma}=|\mathbb S^{n-1}|^{-1}e^{-2(n-1)\left\{(1-\sigma)\log 2+I(1)\right\}}\left(\int_{\B^n} e^{\frac{2n}{2\sigma-1}\phi}\right)^{1/m_{\sigma}} .
		\end{align*}
		
		\end{lem}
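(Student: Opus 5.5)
The plan is to change variables $\xi=\mathcal S(x)$ in the left-hand side of \eqref{Sec 2 Ineq} and track every factor. In Theorem \ref{Sec 2 thm}, $f$ is replaced by $\tilde f:=u\circ\mathcal S|_{\S^{n-1}}$, which corresponds to $f$ via \eqref{Sec 4 Def-u}. Since $\mathcal S$ is a bijection between boundary data on $\S^{n-1}$ and on $\R^{n-1}$, this correspondence is one-to-one, so the two inequalities are equivalent once every factor matches up to the stated constant.

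\textbf{Right-hand side.} We use $\int_{\R^{n-1}}e^{u}\,\ud x'=|\S^{n-1}|\int_{\S^{n-1}}e^{f}\,\ud\mu$.

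\textbf{Left-hand side.} By \eqref{P-P} and linearity,
\[
P(f)\circ\mathcal S=\mathcal P(u)-(n-1)\,\mathcal P(f_0),
\]
where $f_0(x')=\log\frac{2}{1+|x'|^2}$. The key step is to compute $\mathcal P(f_0)$ explicitly. The function $f_0$ is, up to a constant, the pullback under $\mathcal S$ of the boundary value of $\log\frac{1-|a|^2}{1-2a\cdot\eta+|a|^2}$ in the degenerate limit $|a|\to1$ (the point $-\mathbf e_n$). It is cleaner, though, to argue directly. We have $2/(1+|x'|^2)=(1-|\xi|^2)/\ldots$ evaluated via \eqref{Sec 4 equ-b} together with $|x+\mathbf e_n|^2$. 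So we compute $P(\log|\eta+\mathbf e_n|^2)$ on the ball using Lemma \ref{Conformal lem 2}, or rather the intermediate identity for $L$ in its proof, which holds for $|a|=1$ by a limiting argument since the log singularity is integrable:
\[
P\bigl(\log|\eta-a|^2\bigr)(\xi)=\log|a-\xi|^2+2I(|\xi|^2)-2I(1),\qquad a=-\mathbf e_n.
\]
Combining this with \eqref{Sec 4 equ-a} gives
\[
\mathcal P(f_0)(x)=\log\frac{2}{|x+\mathbf e_n|^2}+2I(|\xi|^2)-2I(1).
\]

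\textbf{Jacobian and weight.} The Jacobian is $\ud\xi=\bigl(2/|x+\mathbf e_n|^2\bigr)^n\,\ud x$. The weight is
\[
e^{\frac{2n}{2\sigma-1}\phi}=(1-|\xi|^2)^{\frac{2n(1-\sigma)}{2\sigma-1}}\,e^{\frac{2n}{2\sigma-1}I(|\xi|^2)},\qquad 1-|\xi|^2=\frac{4x_n}{|x+\mathbf e_n|^2}.
\]
Now collect terms, noting that $m_\sigma(n-1)=\frac{n}{2\sigma-1}$:
\begin{itemize}
\item The $I(|\xi|^2)$ terms cancel: $-\frac{2n}{2\sigma-1}I+\frac{2n}{2\sigma-1}I=0$.
\item The powers of $|x+\mathbf e_n|^{-2}$ cancel: $-\frac{n}{2\sigma-1}+n+\frac{2n(1-\sigma)}{2\sigma-1}=0$.
\item What remains is
\[
x_n^{\frac{2n(1-\sigma)}{2\sigma-1}}\,e^{m_\sigma\mathcal P(u)}\cdot\text{const},\qquad \text{const}=2^{-\frac{n}{2\sigma-1}}\,2^{n}\,4^{\frac{n(1-\sigma)}{2\sigma-1}}\,e^{\frac{2n}{2\sigma-1}I(1)}.
\]
\end{itemize}

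\textbf{Constant.} Raising to the power $1/m_\sigma$, dividing by the normalization $\int_{\B^n}e^{\frac{2n}{2\sigma-1}\phi}$, and including the factor $|\S^{n-1}|$ yields $c_\sigma$. The powers of $2$ should combine to $e^{-2(n-1)(1-\sigma)\log 2}$, and the $I(1)$ term to $e^{-2(n-1)I(1)}$; this must be checked carefully.

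\textbf{Main obstacle.} The main obstacle is rigorously justifying the boundary case $|a|=1$ of Lemma \ref{Conformal lem 2}. I will handle it by letting $a=-s\mathbf e_n$ with $s\to1^-$. Dominated convergence applies because $\log|\eta-a|^2$ is uniformly integrable against the Poisson kernel for fixed $\xi$, and $I$ is continuous on $[0,1]$ by Remark \ref{Sec 2 rem}. Sharpness of $c_\sigma$ then transfers directly from the sharpness in Theorem \ref{Sec 2 thm}, since equality cases correspond under the transformation.
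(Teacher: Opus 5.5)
Your route is the same as the paper's. Like you, the paper computes $\mathcal P(f_0)$ from Lemma \ref{Conformal lem 2} with $a\to-\mathbf e_n$; your formula $\mathcal P(f_0)(x)=\log\frac{2}{|x+\mathbf e_n|^2}+2I(|\xi|^2)-2I(1)$ agrees with \eqref{Sec 4 Pf0}. It then changes variables by $\mathcal S$ and uses the same cancellations of the $I(|\xi|^2)$ terms and of the powers of $|x+\mathbf e_n|$.

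Your dominated-convergence justification of the boundary case $|a|=1$ is a detail the paper leaves implicit. It is easily made precise: for $a=-s\mathbf e_n$ and $s\ge\frac12$ one has $|\eta+s\mathbf e_n|^2=(1-s)^2+2s(1+\eta_n)\ge \tfrac12|\eta+\mathbf e_n|^2$, which gives an integrable lower bound.

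There is one arithmetic slip, and it matters because identifying $c_\sigma$ is the whole content of the lemma. The weight $(1-|\xi|^2)^{\frac{2n(1-\sigma)}{2\sigma-1}}=\bigl(\frac{4x_n}{|x+\mathbf e_n|^2}\bigr)^{\frac{2n(1-\sigma)}{2\sigma-1}}$ contributes $4^{\frac{2n(1-\sigma)}{2\sigma-1}}$, not $4^{\frac{n(1-\sigma)}{2\sigma-1}}$.

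With your value, the powers of $2$ cancel exactly, by the same exponent count as for $|x+\mathbf e_n|^{-2}$. You would then not obtain the $(1-\sigma)\log 2$ term in $c_\sigma$, contrary to what you assert it ``should combine to.'' With the correct value the exponent of $2$ is
\[
-\frac{n}{2\sigma-1}+n+\frac{4n(1-\sigma)}{2\sigma-1}=\frac{2n(1-\sigma)}{2\sigma-1}.
\]
The constant is therefore $e^{\frac{2n}{2\sigma-1}[(1-\sigma)\log 2+I(1)]}$, exactly as in the paper. Raising it to the power $-1/m_\sigma=-\frac{(2\sigma-1)(n-1)}{n}$ gives the factor $e^{-2(n-1)\{(1-\sigma)\log 2+I(1)\}}$ in $c_\sigma$.
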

		\begin{pf}
			
		For $\eta\in \S^{n-1}$ with $\eta=\mathcal{S}(y',0)$, we have
		\begin{align*}
			1+\eta_n(y)=\frac{2}{1+|y'|^2}.
		\end{align*}
		Therefore, by Lemma \ref{Conformal lem 2}, taking $a\to-\mathbf{e}_{n}$, we obtain
		\begin{align}\label{Sec 4 Pf0}
			\mathcal{P}(f_0)=	P(\log (1+\eta_n))\circ \mathcal{S}=\left[\log |\xi +\mathbf{e}_{n}|^2 -2I\left(1\right)+2I(|\xi|^2)\right]\circ \mathcal{S}-\log 2.
		\end{align}
		Hence,
		\begin{align*}
			-\frac{1}{2}&\mathcal{P}(f_0)(x)+ [(1-\sigma)\log(1-|\xi|^2)+I(|\xi|^2)]\circ \mathcal{S}+\frac{2\sigma-1}{2}\log\frac{2}{|x+\mathbf{e}_{n}|^2 } \\
			=&\left(1-\sigma\right)\log 2+I(1)+(1-\sigma)\log x_n.
		\end{align*}
		By a change of variables, we get
		\begin{align*}
			&\int_{\B^n}e^{m_{\sigma}P(f)(\xi)}e^{\frac{2n}{2\sigma-1}\left[(1-\sigma)\log(1-|\xi|^2)+I(|\xi|^2)\right]}\ud \xi\\
			=&\int_{\R_{+}^n}e^{m_{\sigma}\mathcal{P}(f\circ \mathcal{S})(x)}e^{\frac{2n}{2\sigma-1}\left[(1-\sigma)\log(1-|\xi|^2)+I(|\xi|^2)\right]\circ \mathcal{S}}\left(\frac{2}{|x'|^2+(1+x_n)^2}\right)^n\ud x\\
			=&\int_{\R_{+}^n}e^{m_{\sigma}\mathcal{P}(u)(x)}e^{ \frac{2n}{2\sigma-1}\left[-\frac{1}{2}\mathcal{P}(f_0)(x)+ [(1-\sigma)\log(1-|\xi|^2)+I(|\xi|^2)]\circ \mathcal{S}+\frac{2\sigma-1}{2}\log\frac{2}{|x+\mathbf{e}_{n}|^2 }\right]}  \ud x\\
			=&e^{\frac{2n}{2\sigma-1}((1-\sigma)\log 2+I(1))}\int_{\R_{+}^n}e^{m_{\sigma}\mathcal{P}(u)(x)}x_n^{\frac{2n(1-\sigma)}{2\sigma-1}}\ud x.
		\end{align*}
		Thus, the original inequality can be transformed into an equivalent inequality on $\R^{n}_{+}$.
		\end{pf}

		\section{Classification of extremal functions and its application}\label{Sec 4}

		\subsection{Moving Sphere approach}
       In this subsection, we apply the moving sphere method to classify solutions of the Euler--Lagrange equation \eqref{Sec 3 Equ Rn Lem euq},
       \begin{align}\label{Sec 4 E-L equ-s}
		e^{u(x')}=c'_{\sigma}\int_{\R^{n}_{+}}e^{m_{\sigma}\mathcal{P}(u)(y)}y_n^{\frac{2n(1-\sigma)}{2\sigma-1}}\mathscr{P}(x'-y',y_n)\ud y,
            \end{align}
where
			\begin{align*}
				\mathscr{P}(x',x_n) =\left(\frac{x_n}{|x'|^2+x_n^2}\right)^{n-1}.
            \end{align*}
       
     In Appendix \ref{Appendix}, if $f\in L^{\infty}(\S^{n-1})$, we will prove that every extremal function \eqref{Sec 4 E-L equ-s} is of class $C^1$. Accordingly, throughout the discussion below, we assume that $u\in C^1(\R^{n-1})$. To establish the conformal property under the Kelvin transformation, we first need the following technical lemma.
		\begin{lem}\label{Sec 4 Bubble lem}
	For any \(x=(x',x_n)\in \R^n_+\) and \(x_0=(x_0',0)\in \partial\R^n_+\), if $\lambda\in \R_{+}$, then
	\begin{align*}
		\frac{2^{n-1}}{|\S^{n-1}|}\int_{\R^{n-1}}
		\left(\frac{x_n}{|x'-y'|^2+x_n^2}\right)^{n-1}
		\log \frac{\lambda^2}{|y'-x_0'|^2}\ud y'
		=
		\log\frac{\lambda^2}{|x'-x_0'|^2+x_n^2}.
	\end{align*}
\end{lem}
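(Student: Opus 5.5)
We reduce to $x_0'=0$ and then transplant the identity to the ball, where Lemma \ref{Conformal lem 2} already computes the hyperbolic harmonic extension of the relevant logarithms; the $I$-terms should cancel.

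\emph{Reductions.} The operator $\mathcal{P}$ in \eqref{Sec 4 Def Pu} commutes with horizontal translations, so we may take $x_0'=0$. The constant $\log\lambda^2$ reproduces itself because $\mathcal{P}(1)=1$: since $P(1)=1$, \eqref{P-P} gives $\mathcal P(1)=P(1)\circ\mathcal S=1$. It therefore suffices to prove
\[
\mathcal{P}\bigl(\log|y'|^2\bigr)(x)=\log|x|^2,\qquad x\in\R^n_+ .
\]

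\emph{Step 1: rewrite the boundary datum on $\S^{n-1}$.} Let $\eta=\mathcal S(y',0)$ and note $\mathcal S(0)=\mathbf e_n$. Applying \eqref{Sec 4 equ-a} with the pair $(y',0)$ and $0$ gives $|\eta-\mathbf e_n|^2=4|y'|^2/(1+|y'|^2)$. Hence
\[
\log|y'|^2=\log|\eta-\mathbf e_n|^2-\log 2+f_0(y'),\qquad f_0=\log\tfrac{2}{1+|y'|^2}=\log(1+\eta_n).
\]
By \eqref{P-P}, $\mathcal P(\log|y'|^2)=\bigl[P(\log|\eta-\mathbf e_n|^2)\bigr]\circ\mathcal S-\log2+\mathcal P(f_0)$. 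The last term is already given by \eqref{Sec 4 Pf0}.

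\emph{Step 2: compute $P(\log|\eta-\mathbf e_n|^2)$.} In Lemma \ref{Conformal lem 2}, move $\log(1-|a|^2)$ to the other side, using $P(1)=1$. This gives, for $a\in\B^n$,
\[
P\bigl(\log|\eta-a|^2\bigr)(\xi)=\log[a,\xi]-2I(|\Psi_a(\xi)|^2)+2I(|\xi|^2).
\]
Now let $a\to\mathbf e_n$. By \eqref{conformal equ-a}, $|\Psi_a(\xi)|\to1$ for fixed $\xi\in\B^n$, and $[a,\xi]\to|\xi-\mathbf e_n|^2$. Thus
\[
P\bigl(\log|\eta-\mathbf e_n|^2\bigr)(\xi)=\log|\xi-\mathbf e_n|^2-2I(1)+2I(|\xi|^2).
\]

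\emph{Step 3: combine.} Adding \eqref{Sec 4 Pf0}, the terms $\pm2I(1)$ and $\pm2I(|\xi|^2)$ cancel exactly. What remains is
\[
\log|\xi-\mathbf e_n|^2+\log|\xi+\mathbf e_n|^2-2\log2,\qquad \xi=\mathcal S(x).
\]
Using \eqref{Sec 4 equ-a} with $\mathbf e_n=\mathcal S(0)$ gives $|\xi-\mathbf e_n|^2=4|x|^2/|x+\mathbf e_n|^2$. Since $\mathcal S$ is the inversion centered at $-\mathbf e_n$ with radius $\sqrt2$, we have $|\xi+\mathbf e_n|^2=4/|x+\mathbf e_n|^2$. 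Substituting, and tracking the additive $\log2$ constants through \eqref{Sec 4 Pf0}, should give exactly $\log|x|^2$. The check $\mathcal P(\log|y'|^2)(0,1)=\log1=0$ fixes any leftover constant.

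As an independent sanity check, $\log|x|^2$ satisfies \eqref{Sec 4 PDE}:
\[
\Delta\log|x|^2=\frac{2(n-2)}{|x|^2},\qquad \frac{2-n}{x_n}\,\partial_{x_n}\log|x|^2=\frac{2(2-n)}{|x|^2},
\]
and these sum to zero. We do not use this as the proof, because uniqueness for logarithmically growing data would itself need a Liouville argument.

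\emph{Main obstacle.} The delicate step is the limit $a\to\mathbf e_n$ inside Lemma \ref{Conformal lem 2}. Pointwise, $\log|\eta-a|^2\to\log|\eta-\mathbf e_n|^2$. For fixed $\xi$ the Poisson kernel is bounded, and $|\log|\eta-a|^2|\le C+|\log|\eta-\mathbf e_n'|^2|$ for a nearby point $\mathbf e_n'$. These logarithmic singularities are uniformly integrable on $\S^{n-1}$ for $n\ge2$, so dominated (or Vitali) convergence applies. On the right-hand side we need continuity of $I$ on $[0,1]$, which follows from the absolute convergence argument in Remark \ref{Sec 2 rem}. The same justification underlies \eqref{Sec 4 Pf0}, so we will invoke it in parallel there.
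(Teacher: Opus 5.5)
Your strategy works and differs from the paper's, but Step 1 has a sign error that makes Step 3 false as written. From $|\eta-\mathbf e_n|^2=4|y'|^2/(1+|y'|^2)$ and $1+|y'|^2=2e^{-f_0(y')}$ you get
\[
\log|y'|^2=\log|\eta-\mathbf e_n|^2-f_0(y')-\log 2,
\]
not $+f_0$. With your sign you add \eqref{Sec 4 Pf0}. Both $P(\log|\eta-\mathbf e_n|^2)$ and $P(\log(1+\eta_n))$ carry the same correction $-2I(1)+2I(|\xi|^2)$, so these terms double instead of cancelling. Moreover, your displayed remainder equals
\[
\log|\xi-\mathbf e_n|^2+\log|\xi+\mathbf e_n|^2-2\log 2=\log|x|^2-2\log|x+\mathbf e_n|^2+2\log 2 .
\]
This differs from $\log|x|^2$ by a non-constant function, so the normalization at $(0,1)$ cannot repair it.

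With the correct sign you subtract, and everything closes exactly:
\begin{align*}
\mathcal P(\log|y'|^2)(x)&=\Bigl[\log|\xi-\mathbf e_n|^2-2I(1)+2I(|\xi|^2)\Bigr]-\Bigl[\log|\xi+\mathbf e_n|^2-2I(1)+2I(|\xi|^2)-\log 2\Bigr]-\log 2\\
&=\log\frac{|\xi-\mathbf e_n|^2}{|\xi+\mathbf e_n|^2}=\log|x|^2,\qquad \xi=\mathcal S(x).
\end{align*}
No constant is left over. A minor point on the limit $a\to\mathbf e_n$: your "nearby point" $a/|a|$ moves with $a$, so you do not have one dominating function. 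Use $|\eta-a|^2\ge|a|\,|\eta-a/|a||^2$, note that rotations of $\log|\eta-\mathbf e_n|^2$ are bounded in $L^2(\mathbb S^{n-1})$ and hence uniformly integrable, and apply Vitali.

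The paper never leaves $\R^{n-1}$. It rescales $y'=x'+x_nz'$, which reduces everything to $\int(1+|z'|^2)^{1-n}\log|z'+w|^2\,\ud z'$ with $w=(x'-x_0')/x_n$. Stereographic projection rewrites $\log|z'+w|^2$ as $\log(1+|z'|^2)+\log(1+|w|^2)-\log 4+\log|\eta-\eta_0|^2$. Rotation invariance makes the spherical average of $\log|\eta-\eta_0|^2$ independent of $\eta_0$. The one remaining constant, the value at $w=0$, vanishes because $z'\mapsto z'/|z'|^2$ preserves $(1+|z'|^2)^{1-n}\ud z'$ and flips the sign of $\log|z'|^2$.

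That argument is elementary and self-contained. It needs neither the value $-2I(1)$ of the spherical log-average, nor the hypergeometric computation behind Lemma \ref{Sec 3 Lem1}, nor a passage $a\to\pm\mathbf e_n$ to the boundary. Your route is heavier: it invokes Lemma \ref{Conformal lem 2} at both poles plus a limiting argument. In exchange it gives a structural reason for the identity. The $I$-corrections of the two logarithmic boundary sources at $\pm\mathbf e_n$ cancel, so $\log\bigl(|\xi-\mathbf e_n|^2/|\xi+\mathbf e_n|^2\bigr)$ is itself hyperbolic harmonic, and the lemma is its image under $\mathcal S$. The constant then comes out automatically.
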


\begin{pf}
	By the change of variables
	\[
	y'=x'+x_n z',
	\]
	we obtain
	\begin{align*}
		&\frac{2^{n-1}}{|\S^{n-1}|}\int_{\R^{n-1}}
		\left(\frac{x_n}{|x'-y'|^2+x_n^2}\right)^{n-1}
		\log \frac{\lambda^2}{|y'-x_0'|^2}\ud y' \\
		&=\frac{2^{n-1}}{|\S^{n-1}|}\int_{\R^{n-1}}
		\frac{1}{(1+|z'|^2)^{n-1}}
		\log \frac{\lambda^2}{|x'-x_0'+x_n z'|^2}\,dz' \\
		&=\log\frac{\lambda^2}{x_n^2}
		-\frac{2^{n-1}}{|\S^{n-1}|}\int_{\R^{n-1}}
		\frac{\log \left|z'+\frac{x'-x_0'}{x_n}\right|^2}{(1+|z'|^2)^{n-1}}\,dz'.
	\end{align*}
	
	Now let \(\mathcal{S}|_{\S^{n-1}}:\R^{n-1}\to \S^{n-1}\) be the stereographic projection. Then
	\[
	\frac{1}{|\S^{n-1}|}\frac{2^{n-1}}{(1+|z'|^2)^{n-1}}\,dz'=d\mu,
	\]
	and
	\[
	\left|z'+\frac{x'-x_0'}{x_n}\right|^2
	=
	\frac{(1+|z'|^2)\left(1+\frac{|x'-x_0'|^2}{x_n^2}\right)}{4}
	\left|\mathcal{S}(z')-\mathcal{S}\!\left(-\frac{x'-x_0'}{x_n}\right)\right|^2.
	\]
	Set $\eta=\mathcal{S}(z')$ and $\eta_0=\mathcal{S}\left(-\frac{x'-x_0'}{x_n}\right)$. Then we may rewrite the integral as
	\begin{align*}
		\frac{2^{n-1}}{|\S^{n-1}|}\int_{\R^{n-1}}
		\frac{\log \left|z'+\frac{x'-x_0'}{x_n}\right|^2}{(1+|z'|^2)^{n-1}}\,dz'
		=&
		\log\left(1+\frac{|x'-x_0'|^2}{x_n^2}\right)+	\int_{\S^{n-1}}\log|\eta-\eta_0|^2\ud \mu(\eta)+C'\\
		=&	\log\left(1+\frac{|x'-x_0'|^2}{x_n^2}\right)+C,
	\end{align*}
	where \(C\) is independent of \(x\) and \(x_0\). Indeed, the remaining terms are
	either independent of \(\frac{x'-x_0'}{x_n}\), or depend on it only through
	\[
	\int_{\S^{n-1}}\log|\eta-\eta_0|^2\ud \mu(\eta),
	\]
	which is independent of \(\eta_0\) by rotational invariance. To determine \(C\), we take \(x'=x_0'\). Then
	\[
	C=
	\frac{2^{n-1}}{|\S^{n-1}|}\int_{\R^{n-1}}
	\frac{\log|z'|^2}{(1+|z'|^2)^{n-1}}\,dz'.
	\]
	Using the inversion \(z'\mapsto z'/|z'|^2\), the above integral changes sign, and hence it is zero. Therefore \(C=0\).
	
	Consequently,
	\begin{align*}
		&\frac{2^{n-1}}{|\S^{n-1}|}\int_{\R^{n-1}}
		\left(\frac{x_n}{|x'-y'|^2+x_n^2}\right)^{n-1}
		\log \frac{\lambda^2}{|y'-x_0'|^2}\ud y' \\
		&=\log\frac{\lambda^2}{x_n^2}
		-\log\left(1+\frac{|x'-x_0'|^2}{x_n^2}\right)
		=\log\frac{\lambda^2}{|x'-x_0'|^2+x_n^2}.
	\end{align*}
	This completes the proof.
\end{pf}

	Now, we collect some basic notation in order to begin the moving sphere procedure.	
		For any $x_0=(x'_0,0)\in \partial \R^n_{+}$ and $\lambda>0$, we define the upper ball $\B^{+}_{\lambda,x_0}=\{x\in \R^{n}_{+}:|x-x_0|<\lambda\}$ and the Kelvin transformation from $\R^n_{+}$ to $\R^n_{+}$ by
		\begin{align*}
			\mathcal{S}_{\lambda,x_0}(x)=x_0+\frac{\lambda^2}{|x-x_0|^2}(x-x_0),
		\end{align*}
		then we see $\mathcal{S}_{\lambda,x_0}:\B^{+}_{\lambda,x_0}\to \B^{+,c}_{\lambda,x_0}:=\R^{n}_{+}\backslash \B^{+}_{\lambda,x_0}$, $\mathcal{S}_{\lambda,x_0}\circ \mathcal{S}_{\lambda,x_0}=\mathrm{Id}$ and 
		\begin{align*}
			\mathcal{S}_{\lambda,x_0}^{*}|\ud x|^2=\left(\frac{\lambda}{|x-x_0|}\right)^4|\ud x|^2.
		\end{align*}
		For any $u\in C^{1}(\R^{n-1})$, we still use the notation $\mathcal{S}_{\lambda,x_0}(x')=\mathcal{S}_{\lambda,x_0}(x',0)$, and we define the conformal transform of $u$ with respect to $\mathcal{S}_{\lambda,x_0}$ by
		\begin{align}\label{Sec 4 Def-ulambda}
			u_{\lambda,x_0}=u\circ \mathcal{S}_{\lambda,x_0}+(n-1)\log \frac{\lambda^2}{|x'-x'_0|^2}.
		\end{align}
		Then, we have
		\begin{align}\label{Sec 4 Def-ulambda-1}			\mathcal{P}(u)(\mathcal{S}_{\lambda,x_0}(x))=&\frac{2^{n-1}}{|\S^{n-1}|}\int_{\R^{n-1}}\left(\frac{\frac{\lambda^2x_n}{|x-x_0|^2}}{|\mathcal{S}_{\lambda,x_0}(x)-\mathcal{S}_{\lambda,x_0}(y',0)|^2}\right)^{n-1}u\circ \mathcal{S}_{\lambda,x_0}(y')\left(\frac{\lambda}{|y'-x'_0|}\right)^{2(n-1)}\ud y'\nonumber\\
			=&\frac{2^{n-1}}{|\S^{n-1}|}\int_{\R^{n-1}}\left(\frac{x_n}{|x'-y'|^2+x_n^2}\right)^{n-1}u\circ \mathcal{S}_{\lambda,x_0}(y')\ud y'=\mathcal{P}(u\circ \mathcal{S}_{\lambda,x_0})(x),
		\end{align}
		where we have used the identity 
		\begin{align*}
			|\mathcal{S}_{\lambda,x_0}(x)-\mathcal{S}_{\lambda,x_0}(y',0)|^2=\frac{\lambda^4(|x'-y'|^2+x_n^2)}{|x-x_0|^2|y'-x'_0|^2}.
		\end{align*}
		Using Lemma \ref{Sec 4 Bubble lem}, we obtain
        \begin{align*}
        \mathcal{P}(u)\circ\mathcal{S}_{\lambda, x_0}(y)=\mathcal{P}(u_{\lambda,x_0})(y)-2(n-1)\log\frac{\lambda}{|y-x_0|},
        \end{align*}
      and we can therefore derive the integral equation satisfied by $u_{\lambda,x_0}$ through the following straightforward computation:
		
		\begin{align*}
			e^{u_{\lambda,x_0}(x')}=&c'_{\sigma}\int_{\R^{n}_{+}}e^{m_{\sigma}\mathcal{P}(u)(y)}y_n^{\frac{2n(1-\sigma)}{2\sigma-1}}\mathscr{P}(\mathcal{S}_{\lambda,x_0}(x')-y',y_n)\left(\frac{\lambda}{|x'-x'_0|}\right)^{2(n-1)}\ud y  \\
			=& c'_{\sigma}\int_{\R^{n}_{+}}e^{m_{\sigma}\mathcal{P}(u)\circ \mathcal{S}_{\lambda,x_0}(y)}\left(\frac{\lambda^2y_n}{|y-x_0|^2}\right)^{\frac{n-2\sigma+1}{2\sigma-1}}|\mathcal{S}_{\lambda,x_0}(x')-\mathcal{S}_{\lambda,x_0}(y)|^{2-2n}\\
            &\times\left(\frac{\lambda}{|x'-x'_0|}\right)^{2(n-1)} \left(\frac{\lambda}{|y-x_0|}\right)^{2n}\ud y \\
			=&c'_{\sigma}\int_{\R^{n}_{+}}e^{m_{\sigma}\mathcal{P}(u_{\lambda,x_0})(y)}\left(\frac{\lambda^2y_n}{|y-x_0|^2}\right)^{\frac{n-2\sigma+1}{2\sigma-1}}\left(\frac{|y-x_0|^2|x'-x'_0|^2}{\lambda^4\left[|y'-x'|^2+y_n^2\right] }\right)^{n-1}\\
            &\times\left(\frac{\lambda}{|x'-x'_0|}\right)^{2(n-1)} \left(\frac{\lambda}{|y-x_0|}\right)^{\frac{4n(\sigma-1)}{2\sigma-1}}\ud y \\
			=& c'_{\sigma} \int_{\R^{n}_{+}}e^{m_{\sigma}\mathcal{P}(u_{\lambda,x_0})(y)}y_n^{\frac{2n(1-\sigma)}{2\sigma-1}}\mathscr{P}(x'-y',y_n)\ud y.
		\end{align*}

		With the above formulas at hand, we are able to derive the key integral representation formula for $e^{u_{\lambda,x_0} }-e^{u}$. This formula will serve as the core tool for obtaining the contradiction in Lemma \ref{MS stpe 3 Lem}.
		
		\begin{lem}\label{Sec 4 lem1}
			For $x_0=(x'_0,0)\in \partial\R^{n}_+$ and $\lambda>0$, it follows that 
			\begin{align*}
				e^{u_{\lambda,x_0}(x') }-e^{u(x')}=&c'_{\sigma} \int_{\B^{+}_{\lambda,x_0}}y_n^{\frac{2n(1-\sigma)}{2\sigma-1}} \left[e^{m_{\sigma}\mathcal{P}(u_{\lambda,x_0})}-e^{m_{\sigma}\mathcal{P} (u)}\right]J_{\lambda}(x',y) \ud y				
			\end{align*}
			where
			\begin{align*}
				J_{\lambda}(x',y)=\mathscr{P}(x'-y',y_n) -\left(\frac{\lambda}{|x'-x'_0|}\right)^{2(n-1)}\mathscr{P}(\mathcal{S}_{\lambda,x_0}(x')-y',y_n) 
			\end{align*}
            satisfies $J_{\lambda}(x',y)>0$ for  $(x',0), y\in \B^{+}_{\lambda,x_0}$.
		\end{lem}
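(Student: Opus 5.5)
The argument splits the integral in the Euler--Lagrange equation \eqref{Sec 4 E-L equ-s} along the half-ball $\B^{+}_{\lambda,x_0}$ and its complement. The complement is then pulled back to $\B^{+}_{\lambda,x_0}$ by the Kelvin transformation $\mathcal{S}_{\lambda,x_0}$. We begin from the two representations already available:
\begin{align*}
e^{u(x')}=c'_\sigma\int_{\R^n_+}e^{m_\sigma\mathcal{P}(u)(y)}y_n^{\frac{2n(1-\sigma)}{2\sigma-1}}\mathscr{P}(x'-y',y_n)\,\ud y,
\end{align*}
together with the analogous formula for $e^{u_{\lambda,x_0}}$ derived just before the lemma. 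Write $w=y_n^{\frac{2n(1-\sigma)}{2\sigma-1}}$. Split each integral as $\int_{\B^{+}_{\lambda,x_0}}+\int_{\B^{+,c}_{\lambda,x_0}}$. In the exterior pieces substitute $y=\mathcal{S}_{\lambda,x_0}(z)$ with $z\in\B^{+}_{\lambda,x_0}$.

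For this substitution we reuse exactly the computation carried out above for $e^{u_{\lambda,x_0}}$, applied now to $u_{\lambda,x_0}$ in place of $u$. That computation used three ingredients: the Jacobian $(\lambda/|z-x_0|)^{2n}$, the transformation of $y_n$ to $\lambda^2z_n/|z-x_0|^2$, and the identity $\mathcal{P}(u)\circ\mathcal{S}_{\lambda,x_0}=\mathcal{P}(u_{\lambda,x_0})-2(n-1)\log(\lambda/|\cdot-x_0|)$, which is involutive since $(u_{\lambda,x_0})_{\lambda,x_0}=u$. These give
\begin{align*}
\int_{\B^{+,c}_{\lambda,x_0}}e^{m_\sigma\mathcal{P}(u)}w\,\mathscr{P}(x'-y',y_n)\,\ud y=\int_{\B^{+}_{\lambda,x_0}}e^{m_\sigma\mathcal{P}(u_{\lambda,x_0})}w\left(\frac{\lambda}{|x'-x_0'|}\right)^{2(n-1)}\mathscr{P}(\mathcal{S}_{\lambda,x_0}(x')-z',z_n)\,\ud z.
\end{align*}
The same formula holds with $u$ and $u_{\lambda,x_0}$ interchanged. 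The powers of $\lambda/|z-x_0|$ cancel exactly as before, because $\frac{2n(1-\sigma)}{2\sigma-1}+\frac{2n}{2\sigma-1}\cdot(\ldots)$ was already balanced in the derivation of the equation for $u_{\lambda,x_0}$. To obtain the kernel in this form we use $|\mathcal{S}_{\lambda,x_0}(x')-\mathcal{S}_{\lambda,x_0}(z)|$ together with the symmetry of the kernel under the Kelvin map. Subtracting the two representations and collecting terms then gives the stated formula with kernel $J_\lambda$.

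For the positivity of $J_\lambda$ we write
\begin{align*}
\left(\frac{\lambda}{|x'-x_0'|}\right)^{2(n-1)}\mathscr{P}(\mathcal{S}_{\lambda,x_0}(x')-y',y_n)=\left(\frac{y_n\lambda^2}{|x'-x_0'|^2|\mathcal{S}_{\lambda,x_0}(x')-y|^2}\right)^{n-1}.
\end{align*}
So $J_\lambda>0$ reduces to the inequality $|x'-x_0'|\,|\mathcal{S}_{\lambda,x_0}(x')-y|>\lambda|x'-y|$. We prove it through the standard identity
\begin{align*}
\frac{|x'-x_0'|^2|\mathcal{S}_{\lambda,x_0}(x')-y|^2}{\lambda^2}-|x'-y|^2=\frac{(\lambda^2-|x'-x_0'|^2)(\lambda^2-|y-x_0|^2)}{\lambda^2},
\end{align*}
which is strictly positive when both $|x'-x_0'|<\lambda$ and $|y-x_0|<\lambda$.

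The main obstacle is the bookkeeping in the change of variables. We must check that the exponent of $y_n$ and the powers of $\lambda/|y-x_0|$ coming from the Jacobian, from the weight and from $e^{m_\sigma(\cdot)}$ cancel exactly. Since this cancellation was already verified in the derivation of the equation for $u_{\lambda,x_0}$, we restrict that computation to the exterior region instead of redoing it.
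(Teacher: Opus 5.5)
Your proposal is correct and follows essentially the same route as the paper: split $\R^n_+$ into $\B^{+}_{\lambda,x_0}$ and its complement, and pull both exterior pieces back by $\mathcal{S}_{\lambda,x_0}$; the powers of $\lambda/|z-x_0|$ from the Jacobian, the weight and $e^{m_\sigma\mathcal{P}(\cdot)}$ indeed cancel, since $2n+\frac{4n(1-\sigma)}{2\sigma-1}-\frac{2n}{2\sigma-1}=0$. You then subtract and obtain positivity of $J_\lambda$ from $\frac{|x'-x_0'|^2}{\lambda^2}|\mathcal{S}_{\lambda,x_0}(x')-y|^2-|x'-y|^2=\frac{(\lambda^2-|x'-x_0'|^2)(\lambda^2-|y-x_0|^2)}{\lambda^2}$, which is exactly the paper's identity. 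The only step the paper states more explicitly is $|\mathcal{S}_{\lambda,x_0}(x')-x_0'|^2=\lambda^4/|x'-x_0'|^2$, which turns the prefactor into $(\lambda/|x'-x_0'|)^{2(n-1)}$; your remark about the Kelvin symmetry of the kernel covers this implicitly.
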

		\begin{pf}
			We first decompose $\R^n_{+}$ into $\B^{+}_{\lambda,x_0}$ and $\B^{+,c}_{\lambda,x_0}$. Then
			\begin{align*}
				&\int_{\B^{+,c}_{\lambda,x_0}}e^{m_{\sigma}\mathcal{P}(u)(y)}y_n^{\frac{2n(1-\sigma)}{2\sigma-1}}\mathscr{P}(x'-y',y_n)\ud y \\
				=&\int_{\B^{+}_{\lambda,x_0}}e^{m_{\sigma}\mathcal{P}(u)\circ \mathcal{S}_{\lambda,x_0}(y)}\left(\frac{\lambda^2y_n}{|y-x_0|^2}\right)^{\frac{n-2\sigma+1}{2\sigma-1}}|x'-\mathcal{S}_{\lambda,x_0}(y)|^{2-2n} \left(\frac{\lambda}{|y-x_0|}\right)^{2n}\ud y\\
				=&\int_{\B^{+}_{\lambda,x_0}}e^{m_{\sigma}\mathcal{P}(u_{\lambda,x_0} )(y)}\left(\frac{\lambda^2y_n}{|y-x_0|^2}\right)^{\frac{n-2\sigma+1}{2\sigma-1}}\left(\frac{|y-x_0|^2|\mathcal{S}_{\lambda,x_0}(x')-x'_0|^2}{\lambda^4\left[(y'-\mathcal{S}_{\lambda,x_0}(x'))^2+y_n^2\right] }\right)^{n-1} \left(\frac{\lambda}{|y-x_0|}\right)^{\frac{4n(\sigma-1)}{2\sigma-1}}\ud y\\
				=&\int_{\B^{+}_{\lambda,x_0}}e^{m_{\sigma}\mathcal{P}(u_{\lambda,x_0} )(y)}\left(\frac{|\mathcal{S}_{\lambda,x_0}(x')-x_0'|}{\lambda}\right)^{2(n-1)}y_n^{\frac{2n(1-\sigma)}{2\sigma-1}} \mathscr{P}(\mathcal{S}_{\lambda,x_0}(x')-y',y_n)\ud y
			\end{align*}
			and, similarly,
			\begin{align*}
				&\int_{\B^{+,c}_{\lambda,x_0}}e^{m_{\sigma}\mathcal{P}(u_{\lambda,x_0} )(y)}y_n^{\frac{2n(1-\sigma)}{2\sigma-1}}\mathscr{P}(x'-y',y_n)\ud y \\
				=&\int_{\B^{+}_{\lambda,x_0}}e^{m_{\sigma}\mathcal{P}(u )(y)}\left(\frac{|\mathcal{S}_{\lambda,x_0}(x')-x_0'|}{\lambda}\right)^{2(n-1)}y_n^{\frac{2n(1-\sigma)}{2\sigma-1}} \mathscr{P}(\mathcal{S}_{\lambda,x_0}(x')-y',y_n)\ud y.
			\end{align*}
			Therefore, using the identity $|\mathcal{S}_{\lambda,x_0}(x')-x_0'|^2=\frac{\lambda^4}{|x'-x'_0|^2}$, we obtain the desired formula. Furthermore, if $(x',0), y\in \B^{+}_{\lambda,x_0}$, then
			\begin{align*}
				&\mathscr{P}(x'-y',y_n) -\left(\frac{\lambda}{|x'-x'_0|}\right)^{2(n-1)}\mathscr{P}(\mathcal{S}_{\lambda,x_0}(x')-y',y_n)  \\
				=&y_n^{n-1}\left[\frac{1}{\left(|x'-y'|^2+y_n^2\right)^{n-1}}-\left(\frac{\lambda}{|x'-x'_0|}\right)^{2(n-1)} \frac{1}{\left(|\mathcal{S}_{\lambda,x_0}(x')-y'|^2+y_n^2\right)^{n-1}} \right]>0,
			\end{align*}
			which follows from the identity
			\begin{align*}
				&\frac{|x'-x_0'|^2}{\lambda^2}\left(|\mathcal{S}_{\lambda,x_0}(x')-y'|^2+y_n^2\right)-(|x'-y'|^2+y_n^2)\\
				=&\frac{(|x'-x_0'|^2-\lambda^2)(|y'-x_0'|^2+y_n^2-\lambda^2)}{\lambda^2}>0.
			\end{align*}
			This completes the proof.
		\end{pf}
		\textbf{Step 1}: Start the moving sphere procedure.
		
		In this step, we shall use the asymptotic behavior of $u_{\lambda,x_0}$ to show that $e^{u_{\lambda,x_0}}>e^{u}$ in $\partial \R^n_{+}\cap\B^{+}_{\lambda,x_0}$ for sufficiently small $\lambda$. Here and as follows, keeping in mind that $u$ is defined in $\mathbb{R}^{n-1}$, we simply write $x'\in \B^{+}_{\lambda,x_0}$ to mean $(x',0)\in \B^{+}_{\lambda,x_0}$.

By Theorem \ref{cor:bootstrap_C1} and the subsequent argument, $f\in C^{\alpha}(\S^{n-1})$ and $u\in C^1_{\loc}(\R^{n-1})$.  Hence the following argument is justified.
        
		\begin{lem}
			Let $u$ be defined by \eqref{Sec 4 Def-u} and let $u_{\lambda,x_0}$ be defined by \eqref{Sec 4 Def-ulambda}. Then, for any $x_0\in \partial\R^{n}_{+}$, there exists $\lambda_0>0$ such that for every $\lambda\in (0,\lambda_0)$ we have 
			\begin{align*}
				e^{u_{\lambda,x_0}}>e^{u} \qquad\mathrm{in}\qquad \B^{+}_{\lambda,x_0}.
			\end{align*}
		\end{lem}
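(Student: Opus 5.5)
The plan is to prove the comparison on the full half-ball for the extensions, namely
\[
\mathcal{P}(u_{\lambda,x_0})>\mathcal{P}(u)\quad\text{in }\B^{+}_{\lambda,x_0}\quad(\text{including the flat part } y_n=0),
\]
for all small $\lambda$. Setting $y_n=0$ then gives $u_{\lambda,x_0}>u$ on $\partial\R^n_+\cap\B^+_{\lambda,x_0}$, and exponentiating gives the statement. Write $U:=\mathcal{P}(u)$. From the identity derived just before Lemma \ref{Sec 4 lem1} (which uses Lemma \ref{Sec 4 Bubble lem}),
\[
w_\lambda(y):=\mathcal{P}(u_{\lambda,x_0})(y)-U(y)=U(\mathcal{S}_{\lambda,x_0}(y))-U(y)+2(n-1)\log\frac{\lambda}{|y-x_0|}.
\]
In polar coordinates $y=x_0+r\theta$, with $r=|y-x_0|<\lambda$ and $\theta$ in the upper half-sphere, we have $\mathcal{S}_{\lambda,x_0}(y)=x_0+\rho\theta$ with $\rho=\lambda^2/r>\lambda>r$. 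Define
\[
g_\theta(s):=U(x_0+s\theta)+(n-1)\log s .
\]
Since $2\log(\lambda/r)=\log(\rho/r)$, this gives $w_\lambda(y)=g_\theta(\rho)-g_\theta(r)$. So the task reduces to showing that $g_\theta(\rho)>g_\theta(r)$ whenever $0<r<\lambda<\rho=\lambda^2/r$.

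\textbf{Step 1: local monotonicity.} First I establish that $|\nabla U|\le C_0$ on $\{|y-x_0|\le 1,\ y_n\ge0\}$. I would use the decomposition $U=P(f)\circ\mathcal{S}+(n-1)\mathcal{P}(f_0)$ from \eqref{P-P}, where $\mathcal{P}(f_0)$ is given explicitly by \eqref{Sec 4 Pf0} and is smooth up to the boundary. For the other term, $f\in C^{\alpha}$ and $u\in C^1_{\loc}$ by the Appendix regularity (Theorem \ref{cor:bootstrap_C1}), so boundary regularity of the Poisson-type extension for \eqref{Sec 4 PDE} should give $U\in C^1$ up to $\partial\R^n_+$ locally. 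With the gradient bound, $g_\theta'(s)\ge (n-1)/s-C_0>0$ for $0<s<r_0:=\min\{1,(n-1)/C_0\}$, uniformly in $\theta$. Hence, if $\rho<r_0$, then $g_\theta(\rho)>g_\theta(r)$.

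\textbf{Step 2: the far region $\rho\ge r_0$.} Here I need a lower bound at infinity of the form
\[
U(z)\ge -2(n-1)\log|z-x_0|-C_1\qquad\text{for } |z-x_0|\ge r_0 .
\]
I would get it from the same decomposition: $P(f)$ is bounded by $\|f\|_\infty$, and \eqref{Sec 4 Pf0} expresses $\mathcal{P}(f_0)$ as $\log\frac{2}{|x+\mathbf e_n|^2}$ plus bounded $I$-terms (Remark \ref{Sec 2 rem}). Then, with $\rho\ge r_0$ and $r<\lambda$,
\[
w_\lambda(y)\ge -2(n-1)\log\frac{\lambda^2}{r}-C_1-\max_{\overline{\B^+_{1,x_0}}}U+2(n-1)\log\frac{\lambda}{r}=-2(n-1)\log\lambda-C_2 ,
\]
and the right-hand side is positive once $\lambda<\lambda_0:=\min\{r_0,e^{-C_2/(2(n-1))}\}$.

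Combining the two steps, $w_\lambda>0$ on all of $\B^+_{\lambda,x_0}$, including boundary points, for every $\lambda\in(0,\lambda_0)$, which proves the statement. I expect the main obstacle to be Step 1, specifically the uniform $C^1$ bound for $U$ up to the flat boundary near $x_0$. The equation \eqref{Sec 4 PDE} degenerates at $x_n=0$, so I would rely on the Appendix regularity of $f$ together with the explicit kernel $\mathscr{P}$: differentiate under the integral after subtracting $u(x')$, and use the $C^1$ (Hölder gradient) control of $u$ to handle the singular part.
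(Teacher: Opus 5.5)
Your proof is correct, and its core is the same as the paper's. On the flat boundary your $g_\theta(s)=u(x_0'+s\theta)+(n-1)\log s$ is just $\log\bigl(s^{n-1}e^{u(x_0'+s\theta)}\bigr)$. So Step 1 is exactly the paper's observation that $\frac{\ud}{\ud r}(r^{n-1}e^{u})>0$ for $r\le r_0$. Step 2 is the paper's Cases 1--2 packaged into one logarithmic estimate: the lower bound $e^{u}\ge m\bigl(\frac{2}{1+|x'|^2}\bigr)^{n-1}$ coming from $e^f\ge m$, played against the upper bound $M$ for $e^{u}$ near $x_0'$. The paper proves $e^{u_{\lambda,x_0}}<e^{u}$ on the exterior and transfers it inside via $(u_{\lambda,x_0})_{\lambda,x_0}=u$. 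Your pairing of $r$ with $\rho=\lambda^2/r$ is the same device.

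The genuine difference is that you run the comparison for $\mathcal{P}(u_{\lambda,x_0})-\mathcal{P}(u)$ on the whole half-ball. That choice is the sole source of what you call the main obstacle, and the statement does not need it. For $y$ on the flat part the direction $\theta$ is horizontal, so $x_0+s\theta$ stays in $\partial\R^n_+$. Step 1 then only requires $|\nabla u|\le C_0$ near $x_0'$, which Theorem \ref{cor:bootstrap_C1} supplies, and Step 2 only uses bounds on $u$ itself.

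The interior version does give more: it yields $\mathcal{P}(u_{\lambda,x_0})>\mathcal{P}(u)$ in $\B^+_{\lambda,x_0}$ at once, which the paper instead deduces afterwards by the maximum principle in Corollary \ref{Sec 4 Cor}. The price is a $C^1$ estimate up to the flat boundary for the degenerate extension, which the paper never proves. For $n\ge 3$ this is easy, with $\nabla u$ bounded: $\partial_{x_n}\mathcal{P}(u)(x',x_n)=c_n\int_{\R^{n-1}}\mathscr{P}_1(z)\,z\cdot\nabla u(x'+x_n z)\,\ud z$ and $|z|\mathscr{P}_1\in L^1(\R^{n-1})$. For $n=2$, however, $\mathcal{P}$ is the ordinary Poisson extension, which does not map $C^1$ data to Lipschitz functions. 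There you would need the H\"older modulus of $\nabla u$, which the Appendix's proof produces but Theorem \ref{cor:bootstrap_C1} does not state.
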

		\begin{pf}
			First, it suffices to prove that 
			\begin{align}\label{Sec 4 eu equ}
				e^{u_{\lambda,x_0}}<e^{u} \qquad\mathrm{in}\qquad \B^{+,c}_{\lambda,x_0}.
			\end{align}
			Indeed, since $\mathcal{S}_{\lambda,x_0}\circ \mathcal{S}_{\lambda,x_0}=Id$, we have $(u_{\lambda,x_0})_{\lambda,x_0}=u$. Therefore, if \eqref{Sec 4 eu equ} holds, then for any $x'\in \B^{+}_{\lambda,x_0}$, we have $\mathcal{S}_{\lambda,x_0}(x')\in \B^{+,c}_{\lambda,x_0}$, and hence 
			\begin{align*}
				e^{u(x')}=\left(\frac{\lambda}{|x'-x'_0|}\right)^{2(n-1)}e^{u_{\lambda,x_0}(\mathcal{S}_{\lambda,x_0}(x'))}<\left(\frac{\lambda}{|x'-x'_0|}\right)^{2(n-1)}e^{u(\mathcal{S}_{\lambda,x_0}(x')) }=e^{u_{\lambda,x_0}(x')} \quad\mathrm{in}\quad \B^{+}_{\lambda,x_0}.
			\end{align*}
			
			Next, set $r=|x'-x'_0|$. Since $u$ is $C^1$, we can choose $r_0>0$ sufficiently small such that 
			\begin{align*}
				\frac{\ud }{\ud r}\left(r^{n-1}e^{u}\right)>0,\quad\text{for}\quad 0<r\leq r_0.
			\end{align*}
			It then follows that whenever $r_0\geq|x'-x_0'|>\lambda$,
			\begin{align*}
				|x'-x_0'|^{n-1}e^{u(x')}>\left(\frac{\lambda^2}{|x'-x'_0|}\right)^{n-1}e^{u(\frac{\lambda^2(x'-x'_0)}{|x'-x'_0|^2}+x'_0)}.
			\end{align*}
			On the other hand, if $|x'-x'_0|>r_0$, then there exists $\lambda_1>0$ sufficiently small such that $\frac{\lambda^2}{|x'-x'_0|}<\frac{\lambda^2}{r_0}<1$ for all $\lambda<\lambda_1$. Since $u\in C^{1}(\R^{n-1})$ and $f\in C^{\alpha'}(\mathbb S)$, by \eqref{Sec 4 Def-u} we have 
			\begin{align*}
				u(0)=f(\mathbf{e}_{n})+(n-1)\log 2 \quad\mathrm{and}\quad e^{u(x')}=\left(\frac{2}{1+|x'|^2}\right)^{n-1}e^{f\circ \mathcal{S}(x',0)}=O(|x'|^{-2(n-1)})
			\end{align*}
			as $|x'|\to \infty$. Set $m=\min_{\S^{n-1}}e^{f}$ and $M=\max_{|x'-x'_0|\leq 1}e^{u(x')}$.
			
			\textbf{Case 1}: For $|x'-x'_0|\geq2\max\{r_0+|x_0'|,1\}$, we have $1<|x'|<\frac32|x'-x_0'|$. Letting 
			\begin{align*}
				\lambda<\min\left\{\frac{2}{3}\left(\frac{m}{M}\right)^{\frac{1}{2(n-1)}},\lambda_1\right\},
			\end{align*}
			 we obtain 
			\begin{align*}
				e^{u(x')}>&\left(\frac{1}{|x'|^2}\right)^{n-1}m>\left(\frac{4}{9|x'-x'_0|^2}\right)^{n-1}m >\left(\frac{\lambda^2}{|x'-x_0'|^2}\right)^{n-1}M\\
				\geq&\left(\frac{\lambda^2}{|x'-x'_0|^2}\right)^{n-1}e^{u(\frac{\lambda^2(x'-x'_0)}{|x'-x'_0|^2}+x_0')}=e^{u_{\lambda,x_0}(x')}.
			\end{align*}
			
			\textbf{Case 2}: For $r_0<|x'-x'_0|<2\max\{r_0+|x_0'|,1\}:=R_0$, we have $|x'|<\frac32R_0$. Letting 
            \begin{align*}
				\lambda<\left(\frac{m}{M}\right)^{\frac{1}{2(n-1)}}\frac{r_0}{R_0},
			\end{align*} we estimate
			\begin{align*}
				e^{u(x')}>m\left(\frac{1}{R^2_0}\right)^{n-1}>\left(\frac{\lambda^2}{r_0^2}\right)^{n-1}M>\left(\frac{\lambda^2}{|x'-x'_0|^2}\right)^{n-1}e^{u(\frac{\lambda^2(x'-x'_0)}{|x'-x'_0|^2}+x'_0)}=e^{u_{\lambda,x_0}(x')}.
			\end{align*}

			Thus, we have completed the proof of \eqref{Sec 4 eu equ} by taking 
			\begin{align*}
				\lambda_0=\min\left\{\frac23\left(\frac{m}{M}\right)^{\frac{1}{2(n-1)}},\left(\frac{m}{M}\right)^{\frac{1}{2(n-1)}}\frac{r_0}{R_0}, \lambda_1, r_0\right\}.
			\end{align*}
			
		\end{pf}
		\begin{rem}
			From the definition of $u_{\lambda,x_0}$, and using \eqref{Sec 4 Def-u} and \eqref{Sec 4 Def-ulambda}, near $x_0'$ we have 
			\begin{align*}
				\lim_{x'\to x_0'}	u_{\lambda,x_0}(x')= & \lim_{x'\to x_0'}\left(u\circ \mathcal{S}_{\lambda,x_0}(x')+(n-1)\log \frac{\lambda^2}{|x'-x'_0|^2} \right)\\
				=&\lim_{x'\to x_0'}	f\circ \mathcal{S}( \mathcal{S}_{\lambda,x_0}(x'),0)+(n-1)\lim_{x'\to x_0'}\left(\log\frac{2}{1+| \mathcal{S}_{\lambda,x_0}(x')|^2}+\log \frac{\lambda^2}{|x'-x'_0|^2}\right)\\
				=&f(-\mathbf{e}_{n})+(n-1)\log\frac{2}{\lambda^2}.
			\end{align*}
			Hence, we may regard $u_{\lambda,x_0}$ as a continuous function on $\partial \R^n_{+}$. 
			Moreover, as $\lambda\to 0$, it is clear that $u_{\lambda,x_0}(x_0')>u(x_0')$.
		\end{rem}

		\begin{cor}\label{Sec 4 Cor}
			If $u_{\lambda,x_0}\geq u$ and $u_{\lambda,x_0}\not\equiv u$  in $\B^{+}_{\lambda,x_0} $, then $	\mathcal{P}(u_{\lambda,x_0})> 	\mathcal{P}(u) $ in $\B^{+}_{\lambda,x_0} $. 
		\end{cor}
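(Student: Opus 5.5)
The plan is to work with the difference $w:=\mathcal{P}(u_{\lambda,x_0})-\mathcal{P}(u)$ on the half ball $\B^{+}_{\lambda,x_0}$ and apply a maximum principle for the degenerate operator in \eqref{Sec 4 PDE}. Since $\mathcal{P}$ is linear, $w=\mathcal{P}(u_{\lambda,x_0}-u)$. By \eqref{Sec 4 PDE}, $w$ solves
\begin{align*}
\Delta w+\frac{2-n}{x_n}\partial_{x_n}w=0\quad\text{in }\R^n_+ .
\end{align*}
We first record the boundary behaviour of $w$ on $\partial\B^{+}_{\lambda,x_0}$, which has two parts.

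The first part is the flat piece $\{x_n=0,\ |x'-x_0'|<\lambda\}$. There $w=u_{\lambda,x_0}-u\ge 0$ by hypothesis.

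The second part is the spherical cap $\{|x-x_0|=\lambda,\ x_n>0\}$. There we claim $w=0$, and this follows from the symmetry of the Kelvin transform. By \eqref{Sec 4 Def-ulambda-1} together with Lemma \ref{Sec 4 Bubble lem},
\begin{align*}
\mathcal{P}(u_{\lambda,x_0})(y)=\mathcal{P}(u)(\mathcal{S}_{\lambda,x_0}(y))+2(n-1)\log\frac{\lambda}{|y-x_0|}.
\end{align*}
On $|y-x_0|=\lambda$ we have $\mathcal{S}_{\lambda,x_0}(y)=y$ and the logarithm vanishes, so $w=0$ on the cap. Hence $w\ge 0$ on the whole boundary of $\B^{+}_{\lambda,x_0}$. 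Because $u_{\lambda,x_0}$ extends continuously across $x_0'$ (see the remark after Step 1), we get $w\in C(\overline{\B^{+}_{\lambda,x_0}})$.

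Next we show $w\ge0$ inside. Suppose instead that $w$ has a negative minimum. It cannot lie on the boundary, so it is attained at an interior point of $\B^{+}_{\lambda,x_0}$. The operator $\Delta+\frac{2-n}{x_n}\partial_{x_n}$ is uniformly elliptic with smooth coefficients on compact subsets of $\{x_n>0\}$. The strong maximum principle, as noted after \eqref{Sec 4 PDE}, therefore forces $w$ to be constant on the connected domain $\B^{+}_{\lambda,x_0}$. A constant $w$ would equal $0$ by continuity up to the cap, contradicting negativity. So $w\ge 0$ in $\B^{+}_{\lambda,x_0}$.

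To upgrade to strict positivity, suppose $w(y_0)=0$ at some interior $y_0$. Then $0$ is an interior minimum, and the strong maximum principle gives $w\equiv0$ in $\B^{+}_{\lambda,x_0}$. Letting $x_n\to0$ would then give $u_{\lambda,x_0}=u$ on the flat part, contradicting $u_{\lambda,x_0}\not\equiv u$. For this last limit we need $\mathcal{P}(v)(x',x_n)\to v(x')$ at continuity points of $v$, which is the standard approximate-identity property of the kernel in \eqref{Sec 4 Def Pu} (its normalisation is the stereographic identity used in Lemma \ref{Sec 4 Bubble lem}). An alternative argument avoids the limit: if $w\equiv0$ on an open set, then real-analyticity of solutions in $\{x_n>0\}$ gives $w\equiv0$ in $\R^n_+$, and the boundary trace then yields $u_{\lambda,x_0}\equiv u$ on all of $\R^{n-1}$.

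The main obstacle is the degeneracy at $x_n=0$ combined with the possible singularity of $u_{\lambda,x_0}$ at $x_0'$. Interior minima are handled by the local strong maximum principle, so the delicate point is justifying boundary continuity of $w$, so that the minimum principle applies on the closed half-ball. I would use the continuity of $u_{\lambda,x_0}$ from the preceding remark and the boundedness of $u$ near $x_0'$ for this.
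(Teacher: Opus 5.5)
Your proposal is correct and follows the same route as the paper. You show that $w=\mathcal{P}(u_{\lambda,x_0})-\mathcal{P}(u)$ solves the degenerate equation \eqref{Sec 4 PDE}, is nonnegative on the flat part by hypothesis, and vanishes on the spherical cap by \eqref{Sec 4 Def-ulambda-1} and Lemma \ref{Sec 4 Bubble lem}; you then conclude with the strong maximum principle. You also make explicit the steps the paper leaves implicit: the continuity of $w$ up to $\partial\B^{+}_{\lambda,x_0}$, the weak-then-strong minimum principle argument, and where $u_{\lambda,x_0}\not\equiv u$ is used to rule out $w\equiv 0$.
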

		\begin{pf}
			This follows from the fact that $\mathcal{P}(u_{\lambda,x_0}) $ and $\mathcal{P}(u)$ both satisfy the same equation \eqref{Sec 4 PDE} in $\R^n_{+}$, with boundary values
			\begin{align*}
				\mathcal{P}(u_{\lambda,x_0})(x',0)=u_{\lambda,x_0}(x')\geq \mathcal{P}(u)(x',0)=u(x') \qquad\mathrm{on}\qquad \partial\R^n_{+}\cap \B^{+}_{\lambda,x_0}.
			\end{align*}
			Moreover, on $\partial  \B^{+}_{\lambda,x_0}\cap \R^n_{+}$, we have $ \mathcal{S}_{\lambda,x_0}(x)=x$. Using $\mathcal{P}(u\circ \mathcal{S}_{\lambda,x_0})=	\mathcal{P}(u)(\mathcal{S}_{\lambda,x_0}(x)) $ (see \eqref{Sec 4 Def-ulambda-1}), and Lemma \ref{Sec 4 Bubble lem} leads to 
            \begin{align*}
                \mathcal{P}\left(\log\frac{\lambda^2}{|\cdot-x_0|^2}\right)(x',x_n)=\log \frac{\lambda^2}{|x'-x_0'|^2+x_n^2}=0\quad\text{for}\quad |x-x_0|=\lambda,
            \end{align*}
            it follows that
			\begin{align}\label{Sec 4 cor equ-a}
				\mathcal{P}(u_{\lambda,x_0})=\mathcal{P}(u) \qquad\mathrm{on}\qquad \partial  \B^{+}_{\lambda,x_0}\cap \R^n_{+}.
			\end{align}
			The strong maximum principle therefore implies that $	\mathcal{P}(u_{\lambda,x_0})> 	\mathcal{P}(u) $ in $\B^{+}_{\lambda,x_0} $.
		\end{pf}
		At this point, we can start the moving sphere process for $\lambda \in (0,\lambda_0)$. Accordingly, we define the limiting position of $\lambda$ by
		\begin{align}\label{Sec 4 limit lambda}
			\hat{\lambda}_0=\sup\{\lambda: e^{u}<e^{u_{\mu,x_0}} ~\mathrm{in}~\partial \R^n_{+}\cap \B^{+}_{\mu,x_0}~\mathrm{for}~ \mu\in (0,\lambda) \}
		\end{align}
		
		\textbf{Step 2}: The limiting position must be finite, namely, $\hat{\lambda}_0<+\infty$.
		\begin{lem}
			Let $\hat{\lambda}_0$ be defined by \eqref{Sec 4 limit lambda}. Then $\hat{\lambda}_0<+\infty$.
		\end{lem}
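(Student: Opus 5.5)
We argue by contradiction. Suppose $\hat{\lambda}_0=+\infty$. By the definition \eqref{Sec 4 limit lambda}, for every $\lambda>0$ we then have $e^{u}\le e^{u_{\lambda,x_0}}$ on $\partial\R^n_+\cap\B^{+}_{\lambda,x_0}$. Written out using \eqref{Sec 4 Def-ulambda}, this says
\[
u(x')\le u\!\left(x_0'+\frac{\lambda^2(x'-x_0')}{|x'-x_0'|^2}\right)+(n-1)\log\frac{\lambda^2}{|x'-x_0'|^2}\qquad\text{for all } |x'-x_0'|<\lambda .
\]
The plan is to fix $x'$ and let $\lambda\to+\infty$, comparing the two sides through the known decay of $e^{u}$ at infinity.

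From \eqref{Sec 4 Def-u} and the continuity of $f$ on $\S^{n-1}$ (in particular at the north pole $-\mathbf{e}_n$, which is the image of $x'=\infty$), we have
\[
|y'|^{2(n-1)}e^{u(y')}\to 2^{n-1}e^{f(-\mathbf{e}_n)}=:\ell>0\qquad\text{as } |y'|\to\infty .
\]
Set $y'=x_0'+\lambda^2(x'-x_0')/|x'-x_0'|^2$. Then $|y'-x_0'|=\lambda^2/|x'-x_0'|$, so $|y'|\sim\lambda^2/|x'-x_0'|$ as $\lambda\to\infty$. Substituting this into the right-hand side gives
\[
e^{u_{\lambda,x_0}(x')}=\frac{\lambda^{2(n-1)}}{|x'-x_0'|^{2(n-1)}}e^{u(y')}\sim \frac{\lambda^{2(n-1)}}{|x'-x_0'|^{2(n-1)}}\cdot\ell\,\frac{|x'-x_0'|^{2(n-1)}}{\lambda^{4(n-1)}}=\ell\,\lambda^{-2(n-1)}\to 0 .
\]
Hence for each fixed $x'$ the inequality forces $e^{u(x')}\le 0$, which is impossible since $e^{u}>0$. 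This contradiction shows $\hat{\lambda}_0<+\infty$.

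The only delicate step is making sure the asymptotics hold pointwise at a fixed $x'$ as $\lambda\to\infty$. This needs just two facts: $y'\to\infty$, and $f$ is continuous at $-\mathbf{e}_n$. The second follows from the regularity $f\in C^\alpha(\S^{n-1})$ established in the Appendix (Theorem \ref{cor:bootstrap_C1}). No uniformity in $x'$ is required, because a single point $x'$ already gives the contradiction. The case $x'=x_0'$, where the formula is singular, is avoided simply by choosing $x'\neq x_0'$.
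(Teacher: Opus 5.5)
Your argument is correct, but it takes a genuinely different, pointwise route from the paper's.

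\textbf{The paper's route.} The paper argues by mass. The inversion $x'\mapsto\mathcal{S}_{\lambda,x_0}(x')$ has Jacobian $(\lambda/|x'-x_0'|)^{2(n-1)}$ on $\R^{n-1}$, so one has $\int_{\partial\R^n_+\cap\B^{+}_{\lambda,x_0}}e^{u_{\lambda,x_0}}=\int_{\partial\R^n_+\cap\B^{+,c}_{\lambda,x_0}}e^{u}$. Combining this identity with $e^{u_{\lambda,x_0}}>e^u$ inside the half-ball gives $\int_{|x'-x_0'|\ge\lambda}e^u>\frac12\int_{\R^{n-1}}e^u$ for every $\lambda>0$. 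This is impossible as $\lambda\to\infty$, because $e^u\in L^1(\R^{n-1})$. That proof uses only $\int_{\S^{n-1}}e^f\,\ud\mu<\infty$ and no information about how $u$ behaves at infinity. This is why it is the standard device in moving-sphere arguments for integral equations.

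\textbf{Your route.} You use the explicit decay $e^{u(y')}\le C|y'|^{-2(n-1)}$, which comes from the boundedness of $f$ near $-\mathbf{e}_n$. After the Kelvin transform this gives $e^{u_{\lambda,x_0}(x')}=O(\lambda^{-2(n-1)})$ at each fixed $x'\neq x_0'$, so a single point already yields the contradiction. This is more elementary and quantitative, but it relies on the pointwise structure of $u$ inherited from $f\in L^\infty(\S^{n-1})$ rather than on integrability alone.

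\textbf{A minor point.} You only need the upper bound $\limsup_{|y'|\to\infty}|y'|^{2(n-1)}e^{u(y')}\le 2^{n-1}e^{\sup f}$, not the exact limit. So continuity of $f$ at $-\mathbf{e}_n$ is more than you need. Continuity of $u$ is still what makes the pointwise inequality in the definition of $\hat\lambda_0$ meaningful.
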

		\begin{pf}
			Assume by contradiction that $\hat{\lambda}_0=+\infty$. Then for every $\lambda>0$, we would have
			\begin{align*}
				e^{u_{\lambda,x_0}}>e^{u} \quad\mathrm{in}\quad \partial \R^n_{+}\cap\B^{+}_{\lambda,x_0} \qquad\mathrm{and}\qquad \int_{\partial \R^n_{+}\cap\B^{+}_{\lambda,x_0}}e^{u_{\lambda,x_0}}=\int_{\partial \R^n_{+}\cap\B^{+,c}_{\lambda,x_0}}e^{u}.
			\end{align*}
			Hence,
			\begin{align*}
				\int_{\partial \R^n_{+}\cap\B^{+,c}_{\lambda,x_0}}e^{u}=	\int_{\partial \R^n_{+}\cap\B^{+}_{\lambda,x_0}}e^{u_{\lambda,x_0}}>\int_{\partial \R^n_{+}\cap\B^{+}_{\lambda,x_0}}e^{u},
			\end{align*}
			which implies
			\begin{align*}
				\int_{\partial \R^n_{+}\cap\B^{+,c}_{\lambda,x_0}}e^{u}>\frac{1}{2}\int_{\partial\R^{n}_{+}}e^{u} \qquad\mathrm{for \quad any}\qquad \lambda>0.
			\end{align*}
			Letting $\lambda\to +\infty$, we obtain a contradiction.
		\end{pf}
		
		\begin{lem}\label{MS stpe 3 Lem}
			Let $\hat{\lambda}_0$ be defined by \eqref{Sec 4 limit lambda}. Then 
			\begin{align*}
				u(x')\equiv u_{\hat{\lambda}_0,x_0}(x') \qquad\mathrm{for}\qquad x'\in \partial \R^n_{+}.
			\end{align*}
		\end{lem}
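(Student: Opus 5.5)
We argue by contradiction in the standard moving-sphere fashion. Suppose $u\not\equiv u_{\hat\lambda_0,x_0}$ on $\partial\R^n_+$. By continuity in $\lambda$ and the definition of $\hat\lambda_0$, we have $e^{u_{\hat\lambda_0,x_0}}\ge e^{u}$ on $\partial\R^n_+\cap\B^+_{\hat\lambda_0,x_0}$. Corollary \ref{Sec 4 Cor} then gives $\mathcal P(u_{\hat\lambda_0,x_0})>\mathcal P(u)$ in $\B^+_{\hat\lambda_0,x_0}$. We insert this into the representation formula of Lemma \ref{Sec 4 lem1} with $\lambda=\hat\lambda_0$. Because $J_{\hat\lambda_0}(x',y)>0$ for $x',y\in\B^+_{\hat\lambda_0,x_0}$, and the bracket $e^{m_\sigma\mathcal P(u_{\hat\lambda_0,x_0})}-e^{m_\sigma\mathcal P(u)}$ is strictly positive on a set of positive measure, we obtain strict inequality $e^{u_{\hat\lambda_0,x_0}}>e^{u}$ at every $x'\in\partial\R^n_+\cap\B^+_{\hat\lambda_0,x_0}$, including $x'=x_0'$ (interpreted via the continuous extension in the preceding remark).

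Next we upgrade this strict inequality to a uniform margin that survives a small increase of $\lambda$. Near the sphere $|x'-x_0'|=\hat\lambda_0$ both functions coincide, so we need a boundary-derivative estimate. Differentiating the formula of Lemma \ref{Sec 4 lem1} in the radial direction at $|x'-x_0'|=\hat\lambda_0$ uses the fact that $J_{\hat\lambda_0}$ vanishes there and has a strictly negative outward normal derivative for $y\in\B^+_{\hat\lambda_0,x_0}$. This derivative can be computed from the factorized identity in the proof of Lemma \ref{Sec 4 lem1}: the difference equals $(|x'-x_0'|^2-\lambda^2)(\lambda^2-|y-x_0|^2)/\lambda^2$ up to a positive factor. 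It yields
\begin{align*}
\partial_r\bigl(e^{u_{\hat\lambda_0,x_0}}-e^{u}\bigr)<0\quad\text{on}\quad |x'-x_0'|=\hat\lambda_0 .
\end{align*}
Combined with $C^1$ regularity of $u$ (Appendix), this gives some $\delta>0$ with $e^{u_{\lambda,x_0}}>e^u$ in an annulus $\hat\lambda_0-\delta\le|x'-x_0'|<\lambda$ for all $\lambda\in[\hat\lambda_0,\hat\lambda_0+\epsilon)$. On the compact inner region $|x'-x_0'|\le\hat\lambda_0-\delta$ we have a positive minimum of $e^{u_{\hat\lambda_0,x_0}}-e^u$. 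By uniform continuity of $\lambda\mapsto u_{\lambda,x_0}$ there, this positivity persists for $\lambda$ slightly larger than $\hat\lambda_0$. The point $x_0'$ needs separate care, because $u_{\lambda,x_0}(x_0')=f(-\mathbf e_n)+(n-1)\log(2/\lambda^2)$ is finite and continuous in $\lambda$, so it causes no difficulty. Together these contradict the maximality of $\hat\lambda_0$, so $u\equiv u_{\hat\lambda_0,x_0}$.

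The main obstacle is the boundary (Hopf-type) step. We must show that the normal derivative of the integral formula is strictly negative. This requires differentiating under the integral, where the kernel $\mathscr P(x'-y',y_n)$ becomes singular as $y\to(x',0)$. The weight $y_n^{2n(1-\sigma)/(2\sigma-1)}$ may also be singular when $\sigma>1$. I would handle this by first writing the radial derivative of $J_\lambda$ explicitly from the factorized identity. I would then use Fatou's lemma applied to difference quotients, which gives a strict negative upper bound without needing full dominated convergence. Integrability of the weight comes from $\frac{2n(1-\sigma)}{2\sigma-1}>-1$. A secondary technical point is the behavior at infinity, where $u_{\lambda,x_0}$ is compared with $u$ via $e^{u}=O(|x'|^{-2(n-1)})$. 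This is only needed inside $\B^+_{\lambda,x_0}$, so it does not arise here.
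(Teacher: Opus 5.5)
Your proof is correct. It uses the same two ingredients as the paper. The first is strict inequality inside $\B^+_{\hat\lambda_0,x_0}$, obtained from Corollary \ref{Sec 4 Cor} and Lemma \ref{Sec 4 lem1}. The second is a strictly negative radial derivative of $e^{u_{\hat\lambda_0,x_0}}-e^{u}$ on $|x'-x_0'|=\hat\lambda_0$, obtained by differentiating that representation formula.

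The difference is only in the endgame. The paper takes $\lambda_i\downarrow\hat\lambda_0$ with negative minima of $\mathcal{P}(u_{\lambda_i,x_0})-\mathcal{P}(u)$ and pushes them to the flat boundary by the maximum principle. In the limit it gets a point on the sphere where $\nabla(u_{\hat\lambda_0,x_0}-u)=0$, which contradicts the Hopf sign. You instead move the sphere directly, using joint continuity of $\partial_r(e^{u_{\lambda,x_0}}-e^{u})$ in $(\lambda,x')$ together with compactness of the inner disk.

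Your Fatou justification of the derivative's sign, which uses $J_{\hat\lambda_0}\ge 0$ inside, is more careful than the paper's, which differentiates under the integral without comment. One correction: the factorized identity is $(|x'-x_0'|^2-\lambda^2)(|y-x_0|^2-\lambda^2)/\lambda^2$, so the sign you displayed is flipped, although your conclusion is right.
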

		\begin{pf}
			By the definition of $\hat{\lambda}_0$, we have
			\begin{align*}
				u(x')\leq  u_{\hat{\lambda}_0,x_0}(x') \qquad\mathrm{for}\qquad x'\in \partial \R^n_{+}\cap\B^{+}_{\hat{\lambda}_0,x_0}.
			\end{align*}
			We claim that equality must hold everywhere. Suppose not. Then there exists $x'_1\in \partial \R^{n}_{+}\cap\B^{+}_{\hat{\lambda}_0,x_0}$ such that $u(x_1')<u_{\hat{\lambda}_0,x_0}(x_1')$. By the strong maximum principle, together with Corollary \ref{Sec 4 Cor}, we obtain 
			\begin{align*}
				\mathcal{P}(u_{\hat{\lambda}_0,x_0})(x)> 	\mathcal{P}(u)(x) \qquad\mathrm{in}\qquad \B^{+}_{\hat{\lambda}_0,x_0}.
			\end{align*}
			Then, using Lemma \ref{Sec 4 lem1}, we further get $e^{u_{\hat{\lambda}_0,x_0}(x')}>e^{u(x')}$ for all $x'\in \partial \R^n_{+}\cap\B^{+}_{\hat{\lambda}_0,x_0}$, namely,
			\begin{align}\label{Sec 4 MS equ-a0}
			u_{\hat{\lambda}_0,x_0}(x')>u(x') \qquad\mathrm{for\ all}\qquad x'\in \partial \R^n_{+}\cap\B^{+}_{\hat{\lambda}_0,x_0}.
			\end{align}
			By continuity, we can choose $c_0>0$ such that 
			\begin{align}\label{Sec 4 MS equ-a}
				\mathcal{P}(u_{\hat{\lambda}_0,x_0})(x)-	\mathcal{P}(u)(x)>2c_0 \qquad\mathrm{in}\qquad \overline{\B}^{+}_{\hat{\lambda}_0/2,x_0}.
			\end{align}
			Since $	\mathcal{P}(u_{\lambda,x_0}) $ depends continuously on $\lambda$, there exists $\delta>0$ such that
			\begin{align}\label{Sec 4 MS equ-b0}
				\mathcal{P}(u_{\lambda,x_0})(x)-	\mathcal{P}(u)(x)>c_0 \quad\mathrm{in}\quad \overline{\B}^{+}_{\hat{\lambda}_0/2,x_0},\quad\forall \lambda\in (\hat{\lambda}_0, \hat{\lambda}_0+\delta).
			\end{align}
            Clearly the definition of $\hat{\lambda}_0$ implies the existence of a sequence $(\hat{\lambda}_0, \hat{\lambda}_0+\delta)\ni \lambda_i\to \hat{\lambda}_0$ such that for each fixed $i$, there exists $x_i \in \overline{\B}^{+}_{\lambda_i,x_0}$ satisfying
			\begin{align*}
				\mathcal{P}(u_{\lambda_i,x_0})(x_i)-	\mathcal{P}(u)(x_i)=\min_{x\in \overline{\B}^{+}_{\lambda_i,x_0}}	\left(\mathcal{P}(u_{\lambda_i,x_0})(x)-	\mathcal{P}(u)(x)\right)<0.
			\end{align*}
			Indeed, if this minimum were nonnegative for any $\lambda-\hat{\lambda}_0>0$ small, then Lemma \ref{Sec 4 lem1} and \eqref{Sec 4 MS equ-b0} would imply $u_{\lambda,x_0}>u$ in $\B^{+}_{\lambda,x_0}$ for any $\lambda-\hat{\lambda}_0>0$ small, which contradicts the definition of $\hat{\lambda}_0$. 
            
            Since $ \mathcal{P}(u_{\lambda_i,x_0})$ and $\mathcal{P}(u)$ satisfy the same PDE \eqref{Sec 4 PDE}, by the strong maximum principle, together with \eqref{Sec 4 cor equ-a} and \eqref{Sec 4 MS equ-b0}, we conclude that
			\begin{align*}
				x_i=(x_i',0)\in \partial\R^n_{+}\cap \left(\B^{+}_{\lambda_i,x_0}\backslash \overline{\B}^{+}_{\hat{\lambda}_0/2,x_0}\right).
			\end{align*}
			On the boundary $\partial \R^{n}_{+}$, we have $u(x')=\mathcal{P}(u)(x',0)$. Hence
			\begin{align*}
				u_{\lambda_i,x_0}(x'_i)-	u(x'_i)=\min_{x'\in \partial\R^n_{+}\cap\B^{+}_{\lambda_i,x_0}}	u_{\lambda_i,x_0}(x')-	u(x')<0
			\end{align*}
			and 
			\begin{align*}
				\nabla (u_{\lambda_i,x_0}-	u)(x'_i)=0.
			\end{align*}
			Without loss of generality, we may assume that $x_i\to \hat{x}_0=(\hat{x}'_0,0) \in \overline{\B}^{+}_{\hat{\lambda}_0,x_0}\backslash \overline{\B}^{+}_{\hat{\lambda}_0/2,x_0}$, and therefore
			\begin{align}\label{Sec 4 MS equ-b}
				u_{\hat{\lambda}_0,x_0}(\hat{x}'_0 )-	u(\hat{x}'_0 )\leq 0 \qquad\mathrm{and}\qquad 	\nabla (u_{\hat{\lambda}_0,x_0}-	u)(\hat{x}'_0) =0.
			\end{align}
			The inequality in \eqref{Sec 4 MS equ-b} and \eqref{Sec 4 MS equ-a0} force $|\hat{x}'_0 -x'_0 |=\hat{\lambda}_0$. Then, using Lemma \ref{Sec 4 lem1}, we obtain
			\begin{align*}
				0=&\langle\nabla (e^{u_{\hat{\lambda}_0,x_0} }-e^{u})( \hat{x}'_0), \hat{x}'_0 -x'_0\rangle\\
				=&c_{\sigma}^{'} \int_{\B^{+}_{\hat{\lambda}_0,x_0}}y_n^{\frac{2n(1-\sigma)}{2\sigma-1}} \left[e^{m_{\sigma}\mathcal{P}(u_{\hat{\lambda}_0,x_0})}-e^{m_{\sigma}\mathcal{P} (u)}\right]\langle \nabla (J_{\hat{\lambda}_0})(\hat{x}_0',y),\hat{x}'_0 -x'_0\rangle\  \ud y\\
				=&-2(n-1)c'_{\sigma}		\int_{\B^{+}_{\hat{\lambda}_0,x_0}}y_n^{\frac{2n(1-\sigma)}{2\sigma-1}} \left[e^{m_{\sigma}\mathcal{P}(u_{\hat{\lambda}_0,x_0})}-e^{m_{\sigma}\mathcal{P} (u)}\right]\frac{y_n^{n-1}(\hat{\lambda}_0^2-|x_0'-y'|^2-y_n^2)}{(|\hat{x}_0'-y'|^2+y_n^2)^{n}}\ud y<0,
			\end{align*}
			which yields a contradiction. The final equality follows from
			\begin{align*}
				\nabla|_{x'=\hat{x}'_0} \mathscr{P}(x'-y',y_n)=&-\frac{2(n-1)(\hat{x}_0'-y')y_n^{n-1}}{(|\hat{x}_0'-y'|^2+y_n^2)^n}
			\end{align*}
			and
			\begin{align*}
				&\nabla|_{x'=\hat{x}'_0} \left(\frac{\hat{\lambda}_0}{|x'-x'_0|}\right)^{2(n-1)}\mathscr{P}(\mathcal{S}_{\hat{\lambda}_0,x_0}(x')-y',y_n)\\
				=&\frac{2(n-1)(\hat{x}_0'-x_0')(\hat{\lambda}_0^2-|x_0'-y'|^2-y_n^2)y_n^{n-1}}{\hat{\lambda}_0^2(|\hat{x}_0'-y'|^2+y_n^2)^n}-\frac{2(n-1)(\hat{x}_0'-y')y_n^{n-1}}{(|\hat{x}_0'-y'|^2+y_n^2)^n}.
			\end{align*}
		\end{pf}
		\textbf{Step 3}: $u$ must be radially symmetric with respect to some point $y_0'$.
		
		Recall the following classification result due to Y. Y. Li \cite{Li}.
		\begin{lem}[\protect{\cite[Lemma 5.8]{Li}}]\label{Technique Lem 2}
			Let $n\geq 2$, $\nu\in \R$, and $g\in C(\R^{n-1})$. Assume that for any $x\in \R^{n-1}$, there exists $\lambda(x)>0$ such that 
			\begin{align*}
				\left(\frac{\lambda(x)}{|y-x|}\right)^{\nu}g\left(x+\frac{\lambda(x)^2(y-x)}{|y-x|^2}\right)=g(y) \qquad\qquad\mathrm{~~for~~} \qquad y\in\R^{n-1}\backslash\{x\}.
			\end{align*}
			Then there exist $a\geq 0$, $d>0$, and $x_0\in \R^{n-1}$ such that
			\begin{align*}
				g(x)=\pm a\left(\frac{1}{d^2+|x-x_0|^2}\right)^{\nu/2}.
			\end{align*}
		\end{lem}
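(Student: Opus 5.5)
We must prove Li's Lemma 5.8: if $g\in C(\R^{n-1})$ and for every $x$ there is $\lambda(x)>0$ with $(\lambda(x)/|y-x|)^{\nu} g(x+\lambda(x)^2(y-x)/|y-x|^2)=g(y)$ for all $y\neq x$, then $g=\pm a(d^2+|x-x_0|^2)^{-\nu/2}$.

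The plan is to extract the behaviour of $g$ at infinity from the symmetry and then identify $g$ with an inverse-quadratic profile. First I would handle the sign. If $g(x)=0$ for some $x$, the identity at that point gives $g(y)\to 0$ as $|y|\to\infty$, uniformly along every direction. Applying the identity at any other point $x'$, and letting $y\to x'$ so that the reflected point goes to infinity, gives $g(y)=0$ for all $y\neq x'$. Hence $g\equiv 0$, which is the case $a=0$. Otherwise, by continuity $g$ has a fixed sign, and replacing $g$ by $-g$ we may assume $g>0$ (for the positive-$g$ statement one can also take logs).

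Next, fix any point $x$ and let $y\to\infty$ in the identity. Continuity of $g$ at $x$ gives
\begin{align*}
|y|^{\nu}g(y)\to \lambda(x)^{\nu}g(x)=:A>0,
\end{align*}
and this limit is independent of $x$. Thus $\lambda(x)^{\nu}g(x)=A$ for every $x$, which determines $\lambda(x)$ explicitly. More precisely, I would expand to second order. Write $y=x+z$ with $|z|$ large, so the reflected point is $x+\lambda^2 z/|z|^2$. A Taylor expansion of $g$ at $x$ is needed, so I would first bootstrap regularity: the identity expresses $g$ near infinity through values near $x$, and conversely, and comparing expansions coming from two different centers forces smoothness. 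This is Li's standard route. Granting it, one gets
\begin{align*}
|y|^{\nu}g(y)=A+A\,\frac{\nabla g(x)}{g(x)}\cdot\lambda(x)^2\frac{y}{|y|^2}+\nu A\,\frac{x\cdot y}{|y|^2}+o(|y|^{-1}).
\end{align*}
The coefficient of $y/|y|^2$ cannot depend on $x$, so $\lambda(x)^2\nabla\log g(x)+\nu x=:-\nu x_0$ is constant. Combined with $\lambda(x)^2=(A/g(x))^{2/\nu}$, this gives a first-order ODE
\begin{align*}
\nabla\bigl(g^{-2/\nu}\bigr)=\tfrac{2}{\nu}\cdot\tfrac{\nu}{A^{2/\nu}}(x-x_0)\cdot(\text{const}),
\end{align*}
so $g^{-2/\nu}$ is a quadratic $c|x-x_0|^2+c'$. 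Positivity and $\lambda>0$ then force $c,c'>0$, which yields $g=a(d^2+|x-x_0|^2)^{-\nu/2}$.

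The main obstacle is the regularity needed for the Taylor step, since $g$ is only continuous; the case $\nu=0$ also needs separate care. To avoid differentiating, I would use Li's calculus lemma approach instead. Compose the inversions at two centers $x_1$ and $x_2$; this is a Möbius map, and applying both identities in succession shows that $h:=g^{-2/\nu}$ transforms under the composite Möbius map like the density of the round metric. Then I would prove elementarily that any continuous $h$ whose sphere-inversion symmetry holds with the radius relation $\lambda(x)^2=h(x)/c_\infty$ satisfies $h(y)+\dots$ quadratic identities, namely that $h(x)+h(y)$ minus a multiple of $|x-y|^2$ is affine. This gives the quadratic form directly. If the continuous argument stalls, I would fall back on mollifying $g$: the identity survives convolution only approximately, so this is the step I expect to need the most care.
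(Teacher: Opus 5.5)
\paragraph{Verdict.} Under the extra assumption $g\in C^1$ your argument is complete and is the classical Li--Zhang proof. The paper gives no proof of its own here; it only cites Li. But the statement is for $g\in C(\R^{n-1})$, and there the Taylor step is a genuine gap.

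\paragraph{What is correct.} The following steps are right:
\begin{itemize}
\item the limit $|y|^{\nu}g(y)\to\lambda(x)^{\nu}g(x)=A$;
\item the first-order expansion at infinity;
\item the constancy of $A\bigl(\nu x+\lambda(x)^2\nabla\log g(x)\bigr)$;
\item the integration $\nabla(g^{-2/\nu})=2A^{-2/\nu}(x-x_0)$.
\end{itemize}
The case $\nu=0$ needs no separate care, since $\lambda(x)^{\nu}g(x)=A$ already says $g\equiv A$. In the vanishing case, write $|y|^{\nu}g(y)\to0$, not $g(y)\to 0$; the latter fails for $\nu<0$. The correct form gives $g(x')=0$ at every $x'$. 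For the paper's application the $C^1$ version would even suffice, because the lemma is applied to $g=e^u$ with $u\in C^1_{\loc}$ by Theorem \ref{cor:bootstrap_C1}.

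\paragraph{Why none of your three routes closes the gap.}
\begin{itemize}
\item Comparing the identities at two centers $x,z$ only expresses $g$ near $x$ as an explicit smooth factor times $g\circ M$ near $z$, where $M=\phi_z\circ\phi_x$ is M\"obius. This transports regularity between points but never creates it, so there is no bootstrap.
\item The claimed quadratic identity for $h=g^{-2/\nu}$ is asserted without derivation.
\item Mollification fails because the identity is nonlinear in $g$: both the center and the radius $\lambda(x)=(A/g(x))^{1/\nu}$ vary. You note this yourself.
\end{itemize}

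\paragraph{A way to close it with your own tools.} Take $A>0$ and $\nu\neq0$, and put $h=(g/A)^{-2/\nu}$. Then $\lambda(x)^2=h(x)$ and $h(x)h(y)=|x-y|^2h(\phi_x(y))$.

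\begin{enumerate}
\item Each $\phi_x$ maps every line through $x$ into itself. So on a line $p+te$ with $|e|=1$, the function $\psi(t)=h(p+te)$ satisfies $\psi(s)\psi(t)=(t-s)^2\psi(T_st)$, where $T_st=s+\psi(s)/(t-s)$, and $\psi(t)\sim t^2$.
\item This identity says exactly that the length element $dt/\psi(t)$ is $T_s$-invariant. Set $\tau(t)=\int_{-\infty}^{t}du/\psi(u)$ and $L=\tau(+\infty)<\infty$. Then $\frac{d}{dt}\tau(T_st)=-\tau'(t)$, hence $\tau(T_st)=L+\tau(s)-\tau(t)$ for $t>s$.
\item Since $\tau$ is a $C^1$ diffeomorphism onto $(0,L)$,
\[
\psi(s)=(t-s)\bigl(\tau^{-1}(L+\tau(s)-\tau(t))-s\bigr),\qquad s<t,
\]
is $C^1$ in $s$.
\item Your expansion, run on the line, now gives $\psi(t)-t^2=(\psi'(s)-2s)\,t+o(t)$. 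Hence $\psi'(s)-2s$ is constant, and $h$ is a monic quadratic on every line.
\item Therefore $h-|y|^2$ is continuous and affine on every line, hence affine. So $h(y)=|y-x_0|^2+d^2$ with $d^2=\min h>0$, which is the claimed form.
\end{enumerate}
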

		
		\begin{lem}
			There exist $\lambda>0$ and $y_0'\in \R^{n-1}$ such that 
			\begin{align*}
				u(x')=(n-1)\ln \frac{2\lambda}{\lambda^2+|x'-y_0'|^2}+C.
			\end{align*}
		\end{lem}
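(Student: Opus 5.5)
The plan is to apply Lemma~\ref{Technique Lem 2} to $g=e^{u}$ with $\nu=2(n-1)$. The hypothesis of that lemma follows from Lemma~\ref{MS stpe 3 Lem}: for every $x_0=(x_0',0)\in\partial\R^n_+$ there is a finite limiting radius $\hat\lambda_0=\hat\lambda_0(x_0)>0$ with $u\equiv u_{\hat\lambda_0,x_0}$ on $\partial\R^n_+$. Positivity of $\hat\lambda_0$ comes from Step~1, and finiteness from Step~2.

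Exponentiating the identity $u\equiv u_{\hat\lambda_0,x_0}$ and using definition \eqref{Sec 4 Def-ulambda} gives
\begin{align*}
e^{u(y')}=\left(\frac{\hat\lambda_0(x_0)}{|y'-x_0'|}\right)^{2(n-1)}e^{u\left(x_0'+\frac{\hat\lambda_0(x_0)^2(y'-x_0')}{|y'-x_0'|^2}\right)},\qquad y'\neq x_0'.
\end{align*}
This is exactly the hypothesis of Lemma~\ref{Technique Lem 2} with $\lambda(x)=\hat\lambda_0(x)$. The continuity of $g$ holds because $u\in C^1(\R^{n-1})$, by the regularity result in the Appendix (Theorem~\ref{cor:bootstrap_C1}).

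Lemma~\ref{Technique Lem 2} then gives $e^{u(x')}=\pm a\,(d^2+|x'-y_0'|^2)^{-(n-1)}$. Since $e^u>0$, the sign is $+$ and $a>0$. Setting $\lambda=d$ and writing $a=(2\lambda)^{n-1}e^{C}$ for a suitable constant $C$, taking logarithms yields
\begin{align*}
u(x')=(n-1)\log\frac{2\lambda}{\lambda^2+|x'-y_0'|^2}+C.
\end{align*}

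The substantive part of the argument is already contained in Lemma~\ref{MS stpe 3 Lem}. The point to check carefully is that the moving sphere procedure, and hence the conclusion of Lemma~\ref{MS stpe 3 Lem}, holds for \emph{every} center $x_0\in\partial\R^n_+$, and not only for the centers where Step~1 was illustrated. Step~1 is indeed valid for arbitrary $x_0$, since it only uses that $u\in C^1$ and the decay $e^{u}=O(|x'|^{-2(n-1)})$ coming from \eqref{Sec 4 Def-u}. Once this is confirmed, the statement follows directly from Lemma~\ref{Technique Lem 2}.
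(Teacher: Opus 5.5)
Your proposal is correct and matches the paper's proof: the moving-sphere conclusion $u\equiv u_{\hat\lambda_0(x_0),x_0}$ for every boundary center $x_0$, with $0<\hat\lambda_0<\infty$ from Steps~1--2, is exponentiated and fed into Li's Lemma~\ref{Technique Lem 2} for $g=e^{u}$ with $\nu=2(n-1)$, and positivity of $e^{u}$ then fixes the sign and the constant. One small correction to your side remark: in Step~1 the far-field fact actually used is the lower bound $e^{u(x')}\ge \bigl(\min_{\S^{n-1}}e^{f}\bigr)\bigl(\frac{2}{1+|x'|^2}\bigr)^{n-1}$, not the upper bound $O(|x'|^{-2(n-1)})$, but this does not affect your argument.
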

		\begin{pf}
			By Step 2, for any $x=(x',0) \in \partial\R^{n}_{+}$, there is $\lambda(x')>0$ such that 
			\begin{align*}
				u(y')\equiv u\left(x'+\frac{\lambda^2(x')}{|y'-x'|^2}(y'-x')\right)+ (n-1)\log \frac{\lambda^2(x')}{|y'-x'|^2} \qquad\mathrm{for}\qquad y'\in \partial \R^n_{+}\backslash\{x'\}.
			\end{align*}
			Set $g=e^{u}$. Then we have
			\begin{align*}
				\left(\frac{\lambda(x')}{|y'-x'|}\right)^{2(n-1)}g\left(x'+\frac{\lambda^2(x')}{|y'-x'|^2}(y'-x')\right)=g(y')\qquad\mathrm{for}\qquad y'\in \partial \R^n_{+}\backslash\{x'\}.
			\end{align*}
			It then follows from Lemma \ref{Technique Lem 2} that there exist $\lambda>0, C'>0$ and $y_0'\in \R^{n-1}$ such that 
			\begin{align*}
				e^{u(x')}=g(x')=C'\left(\frac{2\lambda}{\lambda^2+|x'-y_0'|^2}\right)^{n-1}.
			\end{align*}
		\end{pf}

This completes the proof of Theorem \ref{Thm 1} by using \eqref{Sec 4 Def-u}.

			\subsection{Two dimensional case and Bergman space}
            
		In this subsection, we aim to prove that our weighted  Carleman inequality for $n=2$ is equivalent to Question \ref{Intro Que} with $\alpha=1$ and $q/\beta=p$, in which case our radial singular weight
$
\left(1-|\xi|^2\right)^{\frac{4(1-\sigma)}{2\sigma-1}}
$
is nontrivial.
        
	\textbf{Proof of Theorem \ref{Thm 2}:}       
			Replacing $f$ by $pf$, using Theorem \ref{Thm 1}, for $\sigma\in\left(\frac{1}{2},\frac{3}{2}\right) $ and $f\in L^{1}(\S^1)$, there holds
			\begin{align*}
				\left(\int_{\B^2}e^{\frac{2p}{2\sigma-1}P(f)(\xi)}\left(1-|\xi|^2\right)^{\frac{4(1-\sigma)}{2\sigma-1}}\ud\xi \right)^{\frac{2\sigma-1}{2p}}\leq \left(\int_{\B^2}\left(1-|\xi|^2\right)^{\frac{4(1-\sigma)}{2\sigma-1}}\right)^{\frac{2\sigma-1}{2p}}\left(\int_{\S^{1}}e^{pf}\ud \mu\right)^{1/p},
			\end{align*}
			where   $P(f)$ is the classical Poisson kernel in $\B^2$, 
			\begin{align*}
				P(f)(\xi)=\int_{\S^1}\frac{1-|\xi|^2}{|\xi-\eta|^2}f(\eta)\, 
				\ud \mu(\eta).
			\end{align*}
			\textbf{Case 1}:$|F|=e^{P(f)}$ for some smooth function $f\in L^1(\S^{1})$.
			Set $u=P(f)$ to be the harmonic extension of $f$, and let $v$ be a harmonic conjugate of $u$. Then define
			\begin{align*}
				F=e^{u+iv} \qquad\mathrm{with}\qquad |F|=e^{u},
			\end{align*}
			so that $F$ is holomorphic in $\B^2$. Set $\alpha=\frac{2}{2\sigma-1}\in (1,+\infty)$. Then $\frac{4(1-\sigma)}{2\sigma-1}=\alpha-2$. Therefore,
			\begin{align*}
				\left(\int_{\B^2}\left(1-|\xi|^2\right)^{\frac{4(1-\sigma)}{2\sigma-1}}\ud \xi\right)^{\frac{2\sigma-1}{2}}=\left(\frac{\pi}{\alpha-1}\right)^{\frac{1}{\alpha}}.
			\end{align*}
			Hence, we may rewrite the inequality as
			\begin{align*}
				\left(\int_{\B^2}|F|^{p\alpha}\frac{(\alpha-1)}{\pi}\left(1-|\xi|^2\right)^{\alpha-2}\ud\xi \right)^{\frac{1}{p\alpha}}\leq \left(\int_{\S^{1}}e^{pf}\ud \mu\right)^{1/p}.
			\end{align*}
			
			Since $\lim_{r\to 1}|F|(re^{i\theta})=e^{f}$ a.e. on $\S^1$, Jensen's inequality gives
			\begin{align*}
				|F|^{p}	=e^{p u}=e^{P(p f)}\leq P(e^{p f}),
			\end{align*}
			which implies that for any $r\in [0,1)$,
			\begin{align*}
				\frac{1}{2\pi}\int_{0}^{2\pi}|F(re^{i\theta})|^{p}\ud \theta&\leq 	\frac{1}{2\pi}\int_{0}^{2\pi}P(e^{p f})(re^{i\theta})\ud \theta\\
               &=\int_{\S^{1}}\left(\frac1{2\pi}\int_0^{2\pi}\frac{1-r^2}{|re^{i\theta}-\eta|^2}\ud\theta\right)e^{pf(\eta)}\ud\mu(\eta) =\int_{\S^{1}}e^{p f}\ud \mu.
			\end{align*}
			Therefore $F\in H^{p}(\B^2)$. Equivalently, our inequality becomes
			\begin{align*}
				\|F\|_{A^{p\alpha}_{\alpha}(\B^2)}\leq \|F\|_{H^{p}(\B^2) }.
			\end{align*}
			If the equality holds, then by Theorem \ref{Thm 1} we obtain
			\begin{align*}
				f(\eta)=c+\frac{1}{p}\log\frac{1-|a|^2}{|a-\eta|^2} \qquad\mathrm{for}\qquad a\in \B^2,
			\end{align*}
			which implies that there exists $a\in \B^2$ such that
			\begin{align*}
				F(z)=\frac{C}{(1-z\bar{a})^{2/p}}.
			\end{align*}
			\textbf{Case 2}: For a general function $F\in H^p(\B^2)$. Since $\log |F|\in L^1(\S^1)$, we have the factorization
\[
F=M_{F}Q_{F}
\]
(see \cite[Theorem 17.17, p.~344]{Rudin}), where
			\begin{align*}
				|Q_{F}|=e^{P(\log |F|)}, \qquad |M_F|=1\quad\mathrm{on}\quad \S^1.
			\end{align*}
			Here $Q_F$ is the outer factor and $M_F$ is the inner factor. More precisely, one may choose
			\begin{align*}
				Q_{F}(z)=c\exp\left\{\frac{1}{2\pi}\int_{-\pi}^{\pi}\frac{e^{it}+z}{e^{it}-z}\log |F(e^{it})|\,\ud t\right\}, \qquad |c|=1,
			\end{align*}
			and then set $M_{F}=F/Q_{F}$. We note that $Q_{F}$ has no zeros in $\B^2$, and hence $M_{F}$ is holomorphic in $\B^2$ and satisfies $|M_{F}|=1$ on $\S^{1}$. In fact, we also have
			\begin{align*}
				|M_{F}|\leq 1 \quad\mathrm{in}\quad \B^2.
			\end{align*}
			This follows from the fact that $\log|M_{F}|$ is subharmonic in $\B^2$ and vanishes on $\S^1$. Consequently, we obtain
			\begin{align}\label{Sec 5 equ-1}
				\|F\|_{A^{p\alpha}_{\alpha}(\B^2)}\leq \|Q_{F}\|_{A^{p\alpha}_{\alpha}(\B^2)}   \qquad\mathrm{and}\qquad\|Q_{F}\|_{H^{p}(\B^2)}=\|F\|_{H^{p}(\B^2)}.
			\end{align}
			On the other hand, by Case 1, we already know that
			\begin{align}\label{Sec 5 equ-2}
				\|Q_{F}\|_{A^{p\alpha}_{\alpha}(\B^2)} \leq\|Q_{F}\|_{H^{p}(\B^2)}.
			\end{align}
			Combining \eqref{Sec 5 equ-1} and \eqref{Sec 5 equ-2}, we conclude that
			\begin{align*}
				\|F\|_{A^{p\alpha}_{\alpha}(\B^2)}\leq \|F\|_{H^{p}(\B^2)}.
			\end{align*}
			When equality holds, $M_F=e^{i\theta}$ for some $\theta \in \R$ and $\log |Q_F|$ take the form in Theorem \ref{Thm 1}. This completes the proof.

		\section{weighted Huber inequality}\label{Sec 5}

        \subsection{Conformally invariant boundary operators}\label{Sec 5.1}
In this subsection, we aim to show that the hyperbolic extension $P(f)$ can also be viewed as a solution to a polyharmonic equation with conformally invariant boundary data, see Theorem \ref{thm:Green}. Based on this observation, we will then derive the corresponding Green representation formula.

	To explain this more clearly, we first recall the standard conformally invariant boundary operators in the upper half-space. For $n=2m$, on $(\R^n_+,\R^{n-1})$, J.~Case \cite{Case2} introduced the general-order conformally invariant boundary operators through the following recursive formulas:
\begin{align*}
\sum_{l=1}^{j}\binom{j}{l}
\frac{\Gamma\left(\frac12+j\right)\Gamma\left(\frac32+2j-2l-m\right)}
{\Gamma\left(\frac12+j-l\right)\Gamma\left(\frac32+2j-l-m\right)}
\overline{\Delta}^l\tilde{B}^{2m-1}_{2j-2l}
=
(-1)^j \partial_t^{2j}-\tilde{B}^{2m-1}_{2j},
\end{align*}
and
\begin{align*}
\sum_{l=1}^{j}\binom{j}{l}
\frac{\Gamma\left(\frac32+j\right)\Gamma\left(\frac52+2j-2l-m\right)}
{\Gamma\left(\frac32+j-l\right)\Gamma\left(\frac52+2j-l-m\right)}
\overline{\Delta}^l\tilde{B}^{2m-1}_{2j+1-2l}
=
(-1)^{j+1} \partial_t^{2j+1}-\tilde{B}^{2m-1}_{2j+1}.
\end{align*}
Here $\overline{\Delta}$ is the induced Laplacian on $\mathbb R^{n-1}$. It is not hard to see that 
\begin{align*}
    \tilde{B}^{2m-1}_{j}=(-1)^{[\frac{j+1}{2}]}\partial_t^j+\sum_{k=1}^{ [\frac{j}{2}]}a_{j,k} \partial_t^{j-2k}\bar{\Delta}^{k}   \end{align*}
for some constant $a_{j,k}$ where $[x]$ denotes the integer part of $x$.

\begin{thm}[\protect{J. Case \cite[Theorem 5.1]{Case2}}]\label{Case thm}
Let $f^{(0)}\in C^{m-1}(\R^{n-1})\cap H^{m-\frac12}(\R^{n-1})$. Then there exists a unique solution $V$ to
\begin{align}\label{eq:half-space-bvp}
\begin{cases}
(-\Delta)^mV=0 & \mathrm{in}\ \R^n_+,\\[0.3em]
\tilde{B}^{2m-1}_0(V)=f^{(0)} & \mathrm{on}\ \partial\R^n_+,\\[0.3em]
\tilde{B}^{2m-1}_j(V)=0,\quad 1\le j\le m-1 & \mathrm{on}\ \partial\R^n_+.
\end{cases}
\end{align}
Moreover, $V$ is given by the hyperbolic Poisson extension of $f^{(0)}$, namely
\begin{align*}
V=\mathcal{P}(f^{(0)})(x',x_n)
=
\frac{2^{n-1}}{|\S^{n-1}|}
\int_{\R^{n-1}}
\left(
\frac{x_n}{|x'-y'|^2+x_n^2}
\right)^{n-1}
f^{(0)}(y')\ud y'.
\end{align*}
\end{thm}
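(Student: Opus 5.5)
The plan is to verify three things directly: $V=\mathcal P(f^{(0)})$ is polyharmonic of order $m$, it has the prescribed boundary values, and the solution is unique. The base fact, from \eqref{Sec 4 PDE}, is that $\mathcal P(f^{(0)})$ solves the degenerate equation $L V:=\Delta V+\frac{2-n}{x_n}\partial_{x_n}V=0$, i.e.\ $x_n^{n-2}$ times the hyperbolic Laplacian acting on $V$ vanishes.

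For polyharmonicity I will work with $\mathscr P$ itself rather than with $L$. Write
\begin{align*}
\mathscr P(x'-y',x_n)=x_n^{n-1}|x-(y',0)|^{-2(n-1)}.
\end{align*}
The identity $\Delta(x_n^{k}|x|^{-2s})=0$-type recursion together with $n-1=2m-1$ lets me apply $\Delta$ repeatedly. Each application lowers the power of $x_n$ by a controlled amount, using
\begin{align*}
\Delta\bigl(x_n^{a}\rho^{-b}\bigr)=a(a-1)x_n^{a-2}\rho^{-b}-2ab\,x_n^{a}\rho^{-b-2}+b(b+2-n)x_n^{a}\rho^{-b-2},\qquad \rho=|x-(y',0)|^2 .
\end{align*}
I expect the combination to vanish after $m$ steps. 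A cleaner route is the classical fact that $x_n^{2m-1}\rho^{-(2m-1)}$ is, up to a constant, $\partial_{x_n}^{m-1}$-type derivatives of the Boggio/Green kernel for $(-\Delta)^m$ on the half space. Concretely, $x_n^{2m-1}/|x-y|^{2(2m-1)}$ with $n=2m$ equals $c\,x_n^{m}\cdot(\text{normal derivative of order } m \text{ of } G)$, which is polyharmonic. Differentiating under the integral with $f^{(0)}\in C^{m-1}\cap H^{m-1/2}$ then gives $(-\Delta)^mV=0$.

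For the boundary conditions, $\tilde B_0V=V|_{x_n=0}=f^{(0)}$ is the approximate identity property of the normalized kernel, already used in \eqref{Sec 4 PDE}. For $j\ge1$, I will expand $V$ near $x_n=0$ using the equation $LV=0$. Setting $t=x_n$ and writing $V=\sum_k t^k v_k(x')$ formally, the relation $\partial_t^2V+\frac{2-n}{t}\partial_tV=-\overline\Delta V$ gives $v_1=0$. It also gives the recursion $k(k+1-n)v_{k+1}\cdot\ldots=-\overline\Delta v_{k-1}$, so the odd coefficients vanish up to order $n-1$ and $v_{2j}=c_j\overline\Delta^j f^{(0)}$ for explicit constants $c_j$, until the log/nonlocal term appears at order $t^{n-1}=t^{2m-1}$. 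Thus $\partial_t^{i}V|_{t=0}$ for $i\le 2m-2$ is a local expression in $\overline\Delta^{\,\cdot}f^{(0)}$. Substituting into the recursive definition of $\tilde B_j$, whose Gamma-coefficients are precisely designed to cancel these local terms, yields $\tilde B_jV=0$ for $1\le j\le m-1$, since these only involve derivatives of order $\le m-1\le 2m-2$. Checking that Case's coefficients match the $c_j$ is the main computational obstacle. I would prove it by induction on $j$, comparing both sides on the test functions $f^{(0)}=e^{i\xi\cdot x'}$, where $\overline\Delta=-|\xi|^2$ and $V=c\,(|\xi|t)^{m-1/2}K_{m-1/2}(|\xi|t)e^{i\xi\cdot x'}$ is an explicit Bessel expression with known Taylor coefficients.

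For uniqueness, if $W$ solves \eqref{eq:half-space-bvp} with zero data, then $\tilde B_j W=0$ for $0\le j\le m-1$. These conditions are triangular in $\partial_t^jW$ modulo tangential operators, so they are equivalent to $\partial_t^jW=0$ for $j<m$, i.e.\ Dirichlet data. A polyharmonic function in the energy class $\dot H^m(\R^n_+)$, which is where $H^{m-1/2}$ data places the solution, with zero Dirichlet data vanishes by the energy identity $\int|\nabla^mW|^2=0$. Alternatively, taking the tangential Fourier transform reduces the problem to an ODE whose decaying solutions span an $m$-dimensional space, and the $m$ boundary conditions pin it to zero.
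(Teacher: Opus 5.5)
The paper gives no proof of this statement; it is quoted from Case, so your proposal has to stand on its own. Your uniqueness step is sound: triangularity of the $\tilde B^{2m-1}_j$ turns zero data into zero Dirichlet data, and the energy identity finishes. You are also right to insist on a class such as $\dot H^m(\R^n_+)$. Without one, uniqueness fails, since $x_n^m$ is $m$-harmonic and $\tilde B^{2m-1}_j(x_n^m)=0$ on $\partial\R^n_+$ for $0\le j\le m-1$.

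The genuine gap is the central claim $(-\Delta)^m\mathcal{P}(f^{(0)})=0$, which neither of your two routes proves.
\begin{itemize}
\item The direct computation stops at ``I expect the combination to vanish''. Also, your formula for $\Delta(x_n^a\rho^{-b})$ is correct for $\rho=|x-(y',0)|$, not for its square.
\item The ``cleaner route'' is false. Since $n=2m$, Boggio's $G$ is jointly homogeneous of degree $0$, so $x_n^m$ times any $m$-th order derivative of $G$ has degree $0$. But $x_n^{2m-1}|x-(y',0)|^{-2(2m-1)}$ has degree $-(n-1)$.
\item Moreover, multiplying by $x_n^m$ does not preserve polyharmonicity: for $m=2$, $x_n^3$ is biharmonic but $\Delta^2(x_n^2\cdot x_n^3)=120\,x_n$.
\end{itemize}

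You can supply the missing step from your own Bessel formula. Since $z^{m-1/2}K_{m-1/2}(z)=e^{-z}q(z)$ with $q$ a polynomial of degree $m-1$, the tangential Fourier transform of $V$ is $\hat V(\xi,t)=\hat f^{(0)}(\xi)e^{-|\xi|t}q(|\xi|t)$, normalized so that $q(0)=1$. Now $(\partial_t^2-|\xi|^2)^m=(\partial_t-|\xi|)^m(\partial_t+|\xi|)^m$ annihilates $\hat V$, because $(\partial_t+|\xi|)^m\bigl(e^{-|\xi|t}q(|\xi|t)\bigr)=e^{-|\xi|t}\partial_t^m\bigl(q(|\xi|t)\bigr)=0$.

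A real-variable alternative also works. Set $\Delta_c:=\Delta+\frac{c}{t}\partial_t$ and $D:=t^{-1}\partial_t$; one checks $\Delta_cD=D\Delta_{c-2}$. Writing $\Delta_{c+2}=\Delta_c+2D$, induction gives $\Delta_c^{j+1}U=0\Rightarrow\Delta_{c+2}^{j+2}U=0$. Starting from $\Delta_{2-n}V=0$, which is the hyperbolic harmonicity of $V$, you get $\Delta^mV=0$.

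Your boundary step is also still only a plan.
\begin{itemize}
\item Odd $j$ are automatic: every term of $\tilde B^{2m-1}_j$ then carries an odd number $\le m-1<2m-1$ of $t$-derivatives, and these vanish at $t=0$.
\item For even $j$ you must actually prove the scalar identity between Case's Gamma coefficients and your $c_j$. For $j=2$ it holds: the recursion gives $\tilde B^{2m-1}_2=-\partial_t^2-\frac{1}{3-2m}\overline{\Delta}$, and your expansion gives $\partial_t^2V|_{t=0}=-\frac{1}{3-2m}\overline{\Delta}f^{(0)}$, so the two cancel.
\item The general even case still has to be carried out. Your proposed induction on the symbols $e^{-|\xi|t}q(|\xi|t)$ is a reasonable way to do it.
\end{itemize}
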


Recall \eqref{conf-map:ball_half-space} that $\mathcal{S}^{-1}:\B^n\to \R^n_+$ is the conformal map from the unit ball onto the upper half-space, and write
\begin{align*}
(\mathcal{S}^{-1})^\ast(|\ud x|^2)
=
\left(\frac{2}{|\xi+\mathbf e_n|^2}\right)^2|\ud \xi|^2
=
e^{2\tau}|\ud \xi|^2.
\end{align*}
Then, for any smooth function $V$ on $\R^n_+$,
\begin{align*}
\bigl((-\Delta_x)^mV\bigr)\circ \mathcal{S}^{-1}
=
e^{-n\tau}(-\Delta_\xi)^m\bigl(V\circ \mathcal{S}^{-1}\bigr).
\end{align*}
Accordingly, for any $F\in C^n( \overline{\B^n})$, we define
\begin{align}\label{Sec 5 Def-B}
B^{2m-1}_j(F)
:=
(-1)^{[\frac{j}{2}]} e^{j\tau}\bigl(\tilde{B}^{2m-1}_j(F\circ \mathcal{S})\bigr)\circ \mathcal{S}^{-1},
\qquad 0\le j\le m-1.
\end{align}
We first note that the operators are defined locally by choosing stereographic charts. The conformal covariance of Case's boundary operators (see \cite[p.4]{Case2}) implies that these local definitions agree on overlaps. Hence \(B_j^{2m-1}\) is globally well-defined on \(\mathbb S^{n-1}\). By the formulas for \(\tilde{B}^{2m-1}_j\), we see that \(B^{2m-1}_j\) must take the form
\begin{align}\label{B formula}
    B^{2m-1}_j
    =
    \partial_{\nu}^j
    +
    \sum_{k=0}^{j-1}D_{j-k} (\partial^k_{\nu}|_{\S^n-1}),
\end{align}
where \(D_k\) is an intrinsic differential operator on \(\S^{n-1}\). Using the conformal covariance above together with \eqref{P-P}, we obtain the following consequence.

\begin{lem}\label{Sec 5 lem}
Let $f\in C^{n}(\S^{n-1})$. Then there exists a unique solution $F$ to
\begin{align}\label{eq:ball-bvp}
\begin{cases}
(-\Delta)^mF=0 & \mathrm{in}\ \B^n,\\[0.3em]
B^{2m-1}_0(F)=f & \mathrm{on}\ \S^{n-1},\\[0.3em]
B^{2m-1}_j(F)=0,\quad 1\le j\le m-1 & \mathrm{on}\ \S^{n-1}.
\end{cases}
\end{align}
Moreover, $F$ is given by the hyperbolic Poisson extension of $f$, namely
\begin{align*}
F=P(f)(\xi)
:=
\int_{\S^{n-1}}
\left(
\frac{1-|\xi|^2}{|\xi-\eta|^2}
\right)^{n-1}
f(\eta)\,\ud \mu(\eta),
\end{align*}
where $\ud\mu$ denotes the normalized surface measure on $\S^{n-1}$.
\end{lem}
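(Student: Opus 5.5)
The plan is to transport Case's half-space result (Theorem \ref{Case thm}) to the ball through the conformal map $\mathcal{S}$, using the definition \eqref{Sec 5 Def-B} of $B^{2m-1}_j$ and the intertwining identity \eqref{P-P}. Since the operators $B^{2m-1}_j$ are defined chartwise and glued by conformal covariance, it suffices to work in a stereographic chart. By a rotation of the ball I may place the pole of that chart at any prescribed point of $\S^{n-1}$.

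\emph{Existence.} Given $f\in C^n(\S^{n-1})$, I set $F=P(f)$. By \eqref{P-P} we have $F\circ\mathcal{S}=\mathcal{P}(f\circ\mathcal{S})$, where $\mathcal{P}$ is the hyperbolic Poisson operator on $\R^n_+$. Case's theorem gives $(-\Delta)^m\mathcal{P}(f^{(0)})=0$, $\tilde B_0^{2m-1}\mathcal{P}(f^{(0)})=f^{(0)}$ and $\tilde B_j^{2m-1}\mathcal{P}(f^{(0)})=0$ for $j\ge1$.

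There is a technical point here. The datum $f^{(0)}=f\circ\mathcal{S}(\cdot,0)$ tends to the constant $f(-\mathbf e_n)$ at infinity, so it is not in $H^{m-1/2}(\R^{n-1})$. I will handle this by splitting $f=(f-f(p))+f(p)$, where $p$ is the pole. After rotation, $f-f(p)$ decays at infinity, at least like $|y'|^{-1}$ for $C^1$ data. One needs decay fast enough to lie in $H^{m-1/2}$. If first-order vanishing is insufficient, I would subtract a short Taylor-type combination of $f$ near $p$. Alternatively, I would avoid the Sobolev class entirely and verify the equations for $\mathcal P(f^{(0)})$ directly: the identities are local and linear, and the kernel computation in Case's proof only needs local regularity and boundedness.

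Constants are trivially handled, since $P(c)=c$. The function $c$ is $m$-polyharmonic, and because each $\tilde B_j$ with $j\ge1$ involves only derivatives of order $\ge1$, we get $\tilde B_j(c)=0$. Pulling back, the conformal covariance $((-\Delta_x)^mV)\circ\mathcal S^{-1}=e^{-n\tau}(-\Delta_\xi)^m(V\circ\mathcal S^{-1})$ gives $(-\Delta)^mF=0$ in $\B^n$. Definition \eqref{Sec 5 Def-B} then gives $B_0F=f$ and $B_jF=0$ on $\S^{n-1}$ away from the pole. Choosing a second chart with a different pole, these identities hold on all of $\S^{n-1}$. I also need $F\in C^n(\overline{\B^n})$ to make sense of the boundary operators; this follows from boundary regularity of the hyperbolic Poisson integral for $C^n$ data, via Case's regularity statement or direct kernel estimates.

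\emph{Uniqueness.} Suppose $F_1,F_2$ both solve \eqref{eq:ball-bvp}, and put $W=F_1-F_2$. Then $W$ solves the homogeneous problem. Its pullback $V=W\circ\mathcal S$ is $m$-polyharmonic in $\R^n_+$ with all $\tilde B_j V=0$, and it is bounded with smooth extension to infinity. Uniqueness in Case's theorem then gives $V=0$. Case's uniqueness is stated in an energy class, so I have to check that $V$ belongs to it. The natural route is the Dirichlet-type energy for $(-\Delta)^m$ with these boundary operators: it is conformally invariant, hence finite on the ball and therefore finite for $V$.

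I expect the main obstacle to be this functional-analytic bookkeeping, namely fitting the data and the solution into Case's hypotheses, rather than any algebra. If it causes trouble, I would prove uniqueness directly on the ball instead. I would integrate $W(-\Delta)^mW$ by parts with the conformally invariant boundary bilinear form. With $B_jW=0$ for all $j$, all boundary terms vanish, so the energy is zero. The energy equals the Paneitz/GJMS-type quadratic form, whose kernel consists of constants, and then $B_0W=0$ forces $W=0$.
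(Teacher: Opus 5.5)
Your existence argument follows the paper's route: you transport Case's half-space theorem to the ball via $\mathcal{S}$, using \eqref{P-P} and the conformal covariance built into \eqref{Sec 5 Def-B}. However, the step that makes Case's theorem applicable is never closed.

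Splitting off $f(p)$ leaves a pulled-back datum of size only $O(|y'|^{-1})$. For $m\ge 2$ this is not even in $L^2(\R^{n-1})$, let alone $H^{m-\frac12}(\R^{n-1})$. Subtracting a ``Taylor-type'' function does not help, because it is circular: that function must itself be shown to solve the problem, and its pullback has the same non-decay at infinity unless it vanishes near the pole. Your fallback of verifying the identities directly is only a sketch; made precise, it amounts to localizing the data.

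The paper does exactly this with a partition of unity. It writes $f=f\chi+f(1-\chi)$ with $\chi\equiv 0$ near $-\mathbf{e}_n$ and $1-\chi\equiv 0$ near $\mathbf{e}_n$. Then $(f\chi)\circ\mathcal{S}$, and the pullback of $f(1-\chi)$ under a stereographic chart with pole $\mathbf{e}_n$, both lie in $C^n_c(\R^{n-1})$. Case's theorem applies to each piece, and linearity of $P$ gives the claim for $f$.

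For uniqueness, both of your routes rest on facts you do not establish, and both are far heavier than needed.

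\emph{Half-space route.} Case's uniqueness really does require a restricted class: $V=x_n^m$ is $m$-polyharmonic in $\R^n_+$ with $\tilde{B}^{2m-1}_jV=0$ for $0\le j\le m-1$. So the finite-energy membership of $W\circ\mathcal{S}$ carries the whole argument, and you only assert it.

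\emph{Ball route.} This needs Case's full energy on $(\B^n,\S^{n-1})$ to be nonnegative with kernel exactly the constants. That is a sharp trace-inequality-type statement, not something available here.

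The paper's uniqueness proof is one line. By \eqref{B formula}, $B^{2m-1}_j=\partial_\nu^j+\sum_{k=0}^{j-1}D_{j-k}\circ(\partial_\nu^k|_{\S^{n-1}})$ with tangential $D_{j-k}$. Inductively, $B^{2m-1}_0W=\cdots=B^{2m-1}_{m-1}W=0$ forces $W=\partial_\nu W=\cdots=\partial_\nu^{m-1}W=0$ on $\S^{n-1}$. Uniqueness for the Dirichlet problem of $(-\Delta)^m$ on $\B^n$ then gives $W\equiv 0$.

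You already used the analogous triangular structure of $\tilde{B}^{2m-1}_j$ when you handled constants. Applying it to the homogeneous problem removes all of the functional-analytic bookkeeping.
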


\begin{pf}
First, we decompose $f$ as
\begin{align*}
	f=f\chi+f(1-\chi),
\end{align*}
where $\chi$ is a cut-off function near $-\mathbf e_n$ such that
\begin{align*}
	\chi\equiv 0 \quad\mathrm{in}\quad B_{1/2}(-\mathbf e_n),
	\qquad
	\chi\equiv 1 \quad\mathrm{in}\quad
	\S^{n-1}\backslash B_{1}(-\mathbf e_n).
\end{align*}
Then $(f\chi)\circ \mathcal{S}\big|_{\partial\R^n_+}\in C^n_c(\R^{n-1})$. Applying Theorem \ref{Case thm}, we obtain that $P(f\chi)$ solves \eqref{eq:ball-bvp} with
\[
B^{2m-1}_0(P(f\chi))=f\chi
\quad \text{on } \S^{n-1}.
\]

For the other part $f(1-\chi)$, we choose another conformal map $\mathcal{S}'$, given by the inversion with respect to the sphere centered at $\mathbf e_n$ with radius $\sqrt{2}$:
\begin{align*}
	\mathcal{S}'(x)=\mathbf e_n-\frac{2(x-\mathbf e_n)}{|x-\mathbf e_n|^2}.
\end{align*}
Then
\[
(f(1-\chi))\circ \mathcal{S}'\big|_{\partial\R^n_+}\in C^n_c(\R^{n-1}).
\]
Again by Theorem \ref{Case thm}, $P(f(1-\chi))$ solves \eqref{eq:ball-bvp} with boundary value $f(1-\chi)$. By linearity, $P(f)$ solves \eqref{eq:ball-bvp} with boundary value $f$.

It remains to prove uniqueness. If $f\equiv0$, then by the structure of $B^{2m-1}_j$, see \eqref{B formula}, we have
\begin{align*}
\begin{cases}
(-\Delta)^mF=0 & \mathrm{in}\ \B^n,\\[0.3em]
F=\partial_{\nu}F=\cdots=\partial_{\nu}^{m-1}F=0 & \mathrm{on}\ \S^{n-1}.
\end{cases}
\end{align*}
By the uniqueness of the Dirichlet problem for $(-\Delta)^m$, we get $F\equiv 0$. This proves uniqueness and completes the proof.
\end{pf}

Let $G_n(\xi,\zeta)$ be the Green function of $(-\Delta)^{\frac{n}{2}}$ with Dirichlet boundary conditions. Then
the critical Boggio formula for $(-\Delta)^{\frac{n}{2}}$ (see \cite{Boggio} or  \cite[P. 50]{GGS}) gives
	\[
	G_n(\xi,\zeta)
	=
	k_n^{-1}
	\int_1^{\sqrt{[\xi,\zeta]}/|\xi-\zeta|}
	\frac{(v^2-1)^{m-1}}{v^{2m-1}}\ud v.
	\]
	where $k_n=2^{n-2}\Gamma^2(n/2)|\S^{n-1}|$. From \eqref{conformal equ-a}, we have 
	\[
	\frac{\sqrt{[\xi,\zeta]}}{|\xi-\zeta|}=\frac1{|\Psi_\zeta(\xi)|},
	\]
	the change of variables $s=v^{-2}$ yields
	\begin{align}\label{Green}
		G_n(\xi,\zeta)
		&=
		k_n^{-1}\int_1^{1/|\Psi_\zeta(\xi)|}\frac{(1-v^{-2})^{m-1}}{v}\ud v \nonumber\\
		&=
		\frac{k_n^{-1}}{2}\int_{|\Psi_\zeta(\xi)|^2}^{1}\frac{(1-s)^{m-1}}{s}\ud s \nonumber\\
		&=
		k_n^{-1}\left(
		\log \frac1{|\Psi_\zeta(\xi)|}
		+
		J_m(|\Psi_\zeta(\xi)|^2)
		\right),
	\end{align}
	where
    \begin{align*}
      J_m(t):=\frac12\int_t^1 \frac{(1-s)^{m-1}-1}{s}\ud s.
    \end{align*}
	In particular $G_n\ge 0$, since the integrand in the Boggio representation is nonnegative, and $G_n(\cdot,\zeta)=0$ on $\S^{n-1}$ because $|\Psi_\zeta(\xi)|=1$ there.
\begin{thm}\label{thm:Green}
For every $F\in \mathcal H_f$, where
\begin{align*}
        \mathcal H_f
	:=
	\left\{
	u\in C^n( \overline{\B^n}) :
	B^{2m-1}_0u=f,\ B^{2m-1}_1u=\cdots=B^{2m-1}_{m-1}u=0
	\right\},
    \end{align*}
we have the following unique integral representation:
	\[
	F(\xi)
	=
	P(f)(\xi)
	+
	\int_{\B^n}
	G_n(\xi,\zeta)\,
(-\Delta)^m F(\zeta)\,\ud \zeta.
	\]
\end{thm}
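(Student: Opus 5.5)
The plan is to split $F$ into its polyharmonic part with the prescribed boundary data and a Green potential. Set
\[
W(\xi):=\int_{\B^n}G_n(\xi,\zeta)\,(-\Delta)^mF(\zeta)\,\ud\zeta,\qquad V:=F-W .
\]
The aim is to show that $V$ solves the boundary value problem \eqref{eq:ball-bvp} with datum $f$. Lemma \ref{Sec 5 lem} will then give $V=P(f)$, which is the desired representation.

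\textbf{Step 1: the potential $W$.} Since $F\in C^n(\overline{\B^n})$, the density $h:=(-\Delta)^mF$ is continuous on $\overline{\B^n}$. By the defining property of the Dirichlet Green function (Boggio's formula \eqref{Green}), $W$ is the unique solution of
\[
(-\Delta)^mW=h\ \ \text{in }\B^n,\qquad W=\partial_\nu W=\cdots=\partial_\nu^{m-1}W=0\ \ \text{on }\S^{n-1}.
\]
To justify that $W$ has enough regularity up to the boundary for these traces to make sense, I would use the standard estimates for Boggio's kernel in \cite{GGS}. The kernel $G_n(\xi,\zeta)$ vanishes to order $m$ in the distance of $\xi$ to $\S^{n-1}$, and its singularity at $\xi=\zeta$ is only logarithmic. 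Hence $W\in C^{m-1,\alpha}(\overline{\B^n})$, and $W$ is smooth enough that $\partial_\nu^kW=0$ for $k\le m-1$.

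\textbf{Step 2: boundary data of $W$ and of $V$.} By the structural formula \eqref{B formula}, each $B^{2m-1}_j$ equals $\partial_\nu^j$ plus intrinsic tangential operators acting on the lower traces $\partial_\nu^kW|_{\S^{n-1}}$ with $k<j$. All of these traces vanish, so $B^{2m-1}_jW=0$ for $0\le j\le m-1$.

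Now consider $V=F-W$. We have $(-\Delta)^mV=0$ in $\B^n$. By linearity, $B^{2m-1}_0V=f$ and $B^{2m-1}_jV=0$ for $1\le j\le m-1$.

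\textbf{Step 3: identify $V$ and prove uniqueness.} The uniqueness part of Lemma \ref{Sec 5 lem} reduces to uniqueness for the Dirichlet problem of $(-\Delta)^m$. That argument only uses the traces up to order $m-1$, so it applies in the regularity class where $V$ lives. It yields $V=P(f)$, which proves the formula.

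Uniqueness of the representation is immediate. The density $(-\Delta)^mF$ is determined by $F$. Conversely, if $F=P(f)+\int G_n\,g$ for some continuous $g$, then applying $(-\Delta)^m$ forces $g=(-\Delta)^mF$.

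\textbf{Main obstacle.} The delicate point is regularity at the boundary. Lemma \ref{Sec 5 lem} is stated for $f\in C^n$, whereas here $f=F|_{\S^{n-1}}$ is only $C^n$ and $W$ is only $C^{m-1,\alpha}$ up to the boundary. To handle this, I would first apply uniqueness in the weak class $H^m(\B^n)$: the difference $V-P(f)$ is $m$-polyharmonic with vanishing Dirichlet traces, so the energy argument forces it to be zero. This avoids needing $B^{2m-1}_j$ for $j\ge1$ to act classically on $W$; only $W\in H^m_0(\B^n)$ is needed, and that follows from $h\in L^2$ and Boggio's kernel being the $H^m_0$ Green function.
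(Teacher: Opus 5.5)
Your proposal matches the paper's proof. The paper also subtracts the Boggio–Green potential, uses that each $B^{2m-1}_j$ has leading term $\partial_\nu^j$ acting on vanishing lower traces to see that the remainder carries the same boundary data as $F$, and then invokes the uniqueness in Lemma \ref{Sec 5 lem}. Your $H^m_0$ energy argument is simply a more careful justification of the boundary-regularity step that the paper passes over silently; in fact, since $(-\Delta)^mF$ is continuous, $L^p$ theory already gives $W\in C^{2m-1,\alpha}(\overline{\B^n})$, so the $B$-operators act classically on $W$ anyway.
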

\begin{pf}
    By the definition of the Green function, we know that $G_n(\xi,\cdot)\ast (-\Delta)^mF$ is the solution to
   \begin{align*}
\begin{cases}
(-\Delta)^mu=(-\Delta)^mF & \qquad\mathrm{in}~\ \B^n,\\[0.3em]
u=\partial_{\nu}u=\cdots=\partial_{\nu}^{m-1}u=0 & \qquad\mathrm{on}~\ \S^{n-1}.
\end{cases}
\end{align*}
Therefore, setting
\[
\tilde{F}:=F-G_n(\xi,\cdot)\ast (-\Delta)^mF,
\]
we see that $\tilde{F}$ must be $m$-harmonic and satisfy the corresponding boundary conditions. Moreover, by the definition \eqref{Sec 5 Def-B} of $B^{2m-1}_j$, each $B^{2m-1}_j$ is a linear differential operator whose highest-order term is $\partial_{\nu}^j$. It follows that
\begin{align*}
B^{2m-1}_0(\tilde{F})=f, \qquad B^{2m-1}_j(\tilde{F})=0 \qquad 1\leq j\leq m-1.
\end{align*}
The conclusion then follows from Lemma \ref{Sec 5 lem}.
\end{pf}

Although conformally invariant boundary operators are explicitly known in the upper half-space, their concrete formulas remain unknown on general compact manifolds with boundary, even in the case of the standard ball. In what follows, we derive explicit formulas for these conformal boundary operators in our setting, namely for functions $F\in \mathcal{H}_f$.
	 For $f\in C^n(\S^{n-1})$, we can expand $f$ as the spherical harmonics
	\begin{align}\label{Sec 5 expansion}
	    f=\sum_{\ell=0}^\infty \sum_{k=1}^{N_\ell} f_{\ell,k}Y_{\ell,k},
	\qquad
	-\Delta_{\S^{n-1}}Y_{\ell,k}=\lambda_\ell Y_{\ell,k},
	\qquad
	\lambda_\ell=\ell(\ell+n-2).
	\end{align}
    \begin{lem}\label{Sec 5.1 Lem-a}
        Let $f\in C^n(\S^{n-1})$ with the spherical harmonic expansions \eqref{Sec 5 expansion}, then we have
\[
	P(f)(\xi)
	=
	\sum_{\ell,k} f_{\ell,k}R_\ell(r)Y_{\ell,k}(\theta),
	\qquad
	\xi=r\theta,\quad r=|\xi|,\ \theta\in \S^{n-1}.
	\]
    where
	\begin{align}\label{Sec 5.1 R equ}
	R_\ell(r)
	=
	\frac{{}_2F_1(\ell,1-m;\ell+m;r^2)}
	{{}_2F_1(\ell,1-m;\ell+m;1)} r^\ell
	  \end{align}    and ${}_2F_1$ is hypergeometric function defined by \eqref{Hyper}.
    \end{lem}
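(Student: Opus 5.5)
The plan is to reduce the statement to one radial ODE for each degree $\ell$. I would use the zonal structure of the hyperbolic Poisson kernel together with the equation $\Delta_{\B}P(f)=0$. Fix $r\in[0,1)$ and $\theta\in\S^{n-1}$, and write $\xi=r\theta$. The kernel
\[
\left(\frac{1-r^2}{1+r^2-2r\,\theta\cdot\eta}\right)^{n-1}=:K_r(\theta\cdot\eta)
\]
depends on $\eta$ only through $\theta\cdot\eta$, and $K_r\in L^1((-1,1),(1-t^2)^{\frac{n-3}{2}}\ud t)$ for $r<1$. The Funk--Hecke formula \eqref{Funk-Hecke} then gives $P(Y_{\ell,k})(r\theta)=R_\ell(r)Y_{\ell,k}(\theta)$, where $R_\ell(r)$ is a multiple of $\lambda_\ell$ that depends only on $r$ and $\ell$. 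Since $f\in C^n(\S^{n-1})$, its coefficients satisfy $|f_{\ell,k}|\lesssim \lambda_\ell^{-n/2}\|f\|_{C^n}$. The kernel is positive with total mass $1$, so $|R_\ell(r)|\,|Y_{\ell,k}(\theta)|\le \sup_{\S^{n-1}}|Y_{\ell,k}|$, which gives $|R_\ell|\le1$. These two facts make the expansion $\sum f_{\ell,k}Y_{\ell,k}$ converge absolutely and uniformly, so $P$ may be applied term by term to obtain $P(f)=\sum f_{\ell,k}R_\ell Y_{\ell,k}$.

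It remains to identify $R_\ell$. I would not evaluate the Gegenbauer integral directly. Instead I would use that $P(Y_{\ell,k})$ is hyperbolic-harmonic, i.e.
\[
\Delta u+\frac{2(n-2)}{1-|\xi|^2}\,\xi\cdot\nabla u=0 .
\]
Inserting $u=R(r)Y_{\ell,k}(\theta)$ and using $-\Delta_{\S^{n-1}}Y_{\ell,k}=\lambda_\ell Y_{\ell,k}$ gives
\[
R''+\frac{n-1}{r}R'-\frac{\ell(\ell+n-2)}{r^2}R+\frac{2(n-2)r}{1-r^2}R'=0 .
\]
Next I substitute $R(r)=r^\ell w(z)$ with $z=r^2$ and verify by direct computation that $w$ satisfies Gauss's hypergeometric equation with $c=\ell+\frac n2=\ell+m$ and $\{a,b\}=\{\ell,1-m\}$, using $n=2m$. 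The coefficient of $w$ should come out as $-\ell(1-m)$, and this is the check I expect to need the most care. The indicial exponents of the ODE at $r=0$ are $\ell$ and $2-n-\ell$. $R_\ell$ is bounded near $0$; indeed $R_\ell(0)Y_{\ell,k}(\theta)$ equals $P(Y_{\ell,k})(0)$, which is smooth. Hence $R_\ell$ must be a multiple of $r^\ell\,{}_2F_1(\ell,1-m;\ell+m;r^2)$. When $\ell=0$ I would use $R_0\equiv1$ directly, since $P(1)=1$.

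Finally I fix the constant from the boundary behaviour. For continuous $g$, $P(g)(r\theta)\to g(\theta)$ as $r\to1$, because the normalized kernel is an approximate identity; this is the same computation that shows $P$ solves the Dirichlet problem. Hence $R_\ell(1^-)=1$. The denominator is nonzero: by \eqref{Pre 1.5}, with $c-a-b=2m-1>0$,
\[
{}_2F_1(\ell,1-m;\ell+m;1)=\frac{\Gamma(\ell+m)\Gamma(2m-1)}{\Gamma(m)\Gamma(\ell+2m-1)}>0 .
\]
For integer $m$ the series actually terminates, since $1-m$ is a nonpositive integer, so everything is a polynomial and continuity at $r=1$ is immediate. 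This yields \eqref{Sec 5.1 R equ}. The main obstacle is the algebraic reduction of the radial ODE to hypergeometric form with exactly these parameters, together with the uniqueness argument that discards the singular solution at $r=0$.
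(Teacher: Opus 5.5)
Your proposal is correct and follows essentially the same route as the paper. Both derive the radial ODE from $\Delta_{\B}P(f)=0$ and substitute $R_\ell=r^\ell w(r^2)$ to get the hypergeometric equation with parameters $(\ell,1-m;\ell+m)$. Both then discard the solution behaving like $r^{2-n-\ell}$ at the origin and normalize by $R_\ell(1)=1$. The Funk--Hecke separation into modes, the term-by-term justification, and the explicit check that ${}_2F_1(\ell,1-m;\ell+m;1)\neq 0$ are extra rigor that the paper leaves implicit.
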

    \begin{pf}
If $l=0$, the equality is trivial. So, we only focus on $l\geq 1$. 	Because
	\[
	\Delta_{\B}
	=
	\frac{(1-r^2)^2}{4}\Delta
	+\frac{n-2}{2}(1-r^2)\,\xi\cdot \nabla,
	\]
	the equation $\Delta_{\B} P(f)=0$ yields, on each spherical harmonic mode,
	\begin{align}\label{Sec 5.1 equ-a}
	R_\ell''(r)
	+
	\left(\frac{n-1}{r}+\frac{2(n-2)r}{1-r^2}\right)R_\ell'(r)
	-\frac{\lambda_\ell}{r^2}R_\ell(r)=0,
	\qquad
	R_\ell(1)=1.
	\end{align}
	Setting
	\[
	R_\ell(r)=r^\ell F(r^2),\qquad s=r^2,
	\]
	then we can find 
    \begin{align}\label{Sec 5 equ-d}
        F(s)=O(s^{-l/2}) \qquad\mathrm{as}\qquad s\to 0.   \end{align}
By the direct computation, we find that $F$ solves
	\[
	s(1-s)F''+\bigl(\ell+m-(\ell-m+2)s\bigr)F'-\ell(1-m)F=0.
	\]
According to the properties of the hypergeometric equation (see page 1016 in \cite{Gradshteyn&Ryzhik}),  there exist two linear independent solutions $F^{(1)}(s)$, $F^{(2)}(s)$, where
\[
	F^{(1)}(s)={}_2F_1(\ell,1-m;\ell+m;s),
	\]
	and \begin{align*}
	    F^{(2)}(s)\sim s^{1-l-m} \quad\mathrm{as}\quad s\to 0.
	\end{align*}
Then by \eqref{Sec 5 equ-d}, the second solution $F^{(2)}$ disappears and so $F(s)=cF^{(1)}(s)$. By the property of $P(f)$, we have $P(f)|_{\S^{n-1}}=f$,  
    thus we have  the normalization $R_\ell(1)=1$, which gives $F(s)=F^{(1)}(s)/F^{(1)}(1)$.
\end{pf}    
  For \(j=1,\dots,m-1\), we define the intrinsic differential operators on $\S^{n-1}$ as follows:
\begin{align}\label{Sec 5 T_j}
T_j:=p_{j}(-\Delta_{\mathbb S^{n-1}}),
\end{align}
where the polynomials \(p_{j}\) are defined as follows. Set
\[
p_{0}(x)=1,\qquad p_{1}(x)=0.
\]
For \(j\ge2\), set
\[
\begin{aligned}
2(n-j-1)p_{j}(x)
={}&
\Bigl[
10\binom{j-1}{2}
-(4n-10)(j-1)
\Bigr]p_{j-1}(x)
\\
&+
\Bigl[
24\binom{j-1}{3}
-6(n-3)\binom{j-1}{2}
\Bigr]p_{j-2}(x)
\\
&+
\Bigl[
24\binom{j-1}{4}
-6(n-3)\binom{j-1}{3}
\Bigr]p_{j-3}(x)
\\
&-
2x(j-1)p_{j-2}(x)
-
2x\binom{j-1}{2}p_{j-3}(x),
\end{aligned}
\]
with \(p_{q}\equiv0\) for \(q<0\) and $\binom{j}{k}=0$ if $j<k$.
	
\begin{lem}
	For any $u\in \mathcal{H}_{f}$, the conformal boundary operators can be written as
	\begin{align}\label{Boundary Operator}
		B^{2m-1}_0 u:=u|_{\S^{n-1}},
		\qquad
		B^{2m-1}_j u:=\partial_\nu^j u-T_j(u|_{\S^{n-1}}),
		\qquad j=1,\dots,m-1,
	\end{align}
	where $\nu$ is the outward Euclidean unit normal on $\S^{n-1}$.
\end{lem}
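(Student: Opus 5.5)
The plan is to reduce everything to the hyperbolic Poisson extension $P(f)$ and its spherical harmonic expansion from Lemma \ref{Sec 5.1 Lem-a}. Take $u\in\mathcal H_f$. By Theorem \ref{thm:Green}, $u=P(f)+w$ with $w(\xi)=\int_{\B^n}G_n(\xi,\zeta)(-\Delta)^mu(\zeta)\,\ud\zeta$. The function $w$ vanishes to order $m$ on $\S^{n-1}$, since it solves the Dirichlet problem. Hence $\partial_\nu^j w=0$ on $\S^{n-1}$ for $j\le m-1$, and $\partial_\nu^j u=\partial_\nu^j P(f)$ there. It therefore suffices to show
\[
\partial_\nu^j P(f)\big|_{\S^{n-1}}=T_j f=p_j(-\Delta_{\S^{n-1}})f,\qquad 1\le j\le m-1 .
\]
Together with the defining condition $B^{2m-1}_j u=0$ and the triangular structure \eqref{B formula}, this identifies $B^{2m-1}_j u=\partial_\nu^j u-T_j(u|_{\S^{n-1}})$ on $\mathcal H_f$. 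Because both sides vanish on $\mathcal H_f$, the content of the statement is exactly this identity for the normal derivatives. The well-definedness of $B^{2m-1}_0u=u|_{\S^{n-1}}$ is immediate from \eqref{Sec 5 Def-B} with $j=0$.

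Next, expand $f$ as in \eqref{Sec 5 expansion}, so that $P(f)=\sum f_{\ell,k}R_\ell(r)Y_{\ell,k}$. On $\S^{n-1}$ we have $\partial_\nu=\partial_r$, so the claim reduces, mode by mode, to
\[
R_\ell^{(j)}(1)=p_j(\lambda_\ell),\qquad \lambda_\ell=\ell(\ell+n-2).
\]
Convergence of the differentiated series follows from $f\in C^n$ and the smoothness of $P(f)$ up to the boundary (Lemma \ref{Sec 5 lem}). To compute $R_\ell^{(j)}(1)$ I will use the ODE \eqref{Sec 5.1 equ-a}. Multiplying by $r^2(1-r^2)$ gives
\[
r^2(1-r^2)R''+\bigl((n-1)r(1-r^2)+2(n-2)r^3\bigr)R'-\lambda_\ell(1-r^2)R=0,
\]
which has polynomial coefficients. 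Expanding at $r=1$ in $t=r-1$ turns this into a linear recursion. The recursion expresses $R^{(j)}(1)$ through $R^{(j-1)}(1)$, $R^{(j-2)}(1)$ and $R^{(j-3)}(1)$, with coefficients linear in $\lambda_\ell$ and in binomials of $j-1$.

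To see this, differentiate the polynomial-coefficient ODE $j-1$ times via Leibniz and evaluate at $r=1$, where $1-r^2$ vanishes. The leading coefficient of $R^{(j)}(1)$ comes from derivatives of $r^2(1-r^2)$ and $(n-1)r(1-r^2)+2(n-2)r^3$. It should be proportional to $2(n-j-1)$, matching the left side of the stated recursion. This also explains why the recursion is only valid for $j\le m-1<n-1$: the coefficient never vanishes, and the point $r=1$ is a regular singular point with indicial exponents $0$ and $n-1$. The lower-order terms contribute the displayed coefficients with $\binom{j-1}{2},\binom{j-1}{3},\binom{j-1}{4}$ and the $-2x(j-1)$, $-2x\binom{j-1}{2}$ terms from the $\lambda_\ell(1-r^2)R$ part. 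The initial data are $R_\ell(1)=1=p_0$, and $R_\ell'(1)=0=p_1$. The latter follows from the ODE at $r=1$, where the term $2(n-2)r^3R'$ survives alone, forcing $R'(1)=0$.

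Matching this recursion exactly with the polynomials $p_j$ is the main obstacle, since it is a careful bookkeeping of the Leibniz expansion (cubic coefficients generate the third derivative terms). I would first verify the structural claim: the coefficient of $R^{(j)}(1)$ equals $-2(n-j-1)$ up to sign, and only $R^{(j-1)}, R^{(j-2)}, R^{(j-3)}$ appear. I would then check $j=2,3$ by hand against the explicit hypergeometric form \eqref{Sec 5.1 R equ} as a sanity test. Induction on $j$ then gives $R_\ell^{(j)}(1)=p_j(\lambda_\ell)$, and summing over modes yields $\partial_\nu^jP(f)=T_jf$, completing the proof.
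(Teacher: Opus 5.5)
Your proposal is correct and follows essentially the same route as the paper. You use the Green representation to reduce to the identity $\partial_\nu^j P(f)=T_jf$, expand in spherical harmonics, and differentiate the polynomial-coefficient radial equation $(r^2-r^4)R_\ell''+\bigl((n-1)r+(n-3)r^3\bigr)R_\ell'-\lambda_\ell(1-r^2)R_\ell=0$ exactly $j-1$ times at $r=1$, which gives a recursion with nonvanishing leading coefficient $2(n-j-1)$ and initial data $R_\ell(1)=1$, $R_\ell'(1)=0$, identical to the recursion defining $p_j$. What remains is the explicit Leibniz bookkeeping with $a^{(i)}(1),b^{(i)}(1),c^{(i)}(1)$, which the paper writes out and which yields exactly the displayed coefficients.
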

\begin{rem}
We should point out that, if \(u\notin\mathcal{H}_{f}\), the conformal boundary operators \(B^{2m-1}_j\) cannot in general be written in the above form. It is only after restricting \(u\) to the special space \(\mathcal{H}_{f}\) that these operators can be simplified as above.
\end{rem}
\begin{pf}
	By Theorem \ref{thm:Green}, it suffices to compute the higher-order normal derivatives of $P(f)$,
	\begin{align*}
		\partial_\nu^j P(f)
		=
		\sum_{\ell,k} f_{\ell,k}R_\ell^{(j)}(1)Y_{\ell,k}.
	\end{align*}
	By Lemma \ref{Sec 5.1 Lem-a}, the numbers $R_\ell^{(j)}(1)$ are uniquely determined. We next claim that
	\[
	R_\ell^{(j)}(1)=p_j(\lambda_\ell).
	\]
	Indeed, we rewrite \eqref{Sec 5.1 equ-a} as
	\[
	aR_\ell''+bR_\ell'-\lambda_\ell cR_\ell=0,
	\]
	where
	\[
	a(r)=r^2-r^4,\qquad
	b(r)=(n-1)r+(n-3)r^3,\qquad
	c(r)=1-r^2.
	\]
    then it follows that the initial data 
    \begin{align*}
        R_\ell^{(0)}(1)=1 \qquad\mathrm{and}\qquad R_\ell^{(1)}(1)=0.
    \end{align*} 
We also note that one can alternatively differentiate the formula \eqref{Sec 5.1 R equ} directly to obtain
$
R_\ell^{(1)}(1)=0.$
	Differentiating this equation \(j-1\) times and evaluating at \(r=1\), we obtain
	\begin{align}\label{eq:t-recursion-general}
		0={}&
		\sum_{i=0}^{j-1}
		\binom{j-1}{i}
		a^{(i)}(1)R_\ell^{(j+1-i)}(1)
		+
		\sum_{i=0}^{j-1}
		\binom{j-1}{i}
		b^{(i)}(1)R_\ell^{(j-i)}(1)
		\nonumber\\
		&-
		\lambda_\ell
		\sum_{i=0}^{j-1}
		\binom{j-1}{i}
		c^{(i)}(1)R_{\ell}^{(j-i-1)}(1).
	\end{align}
	The required derivatives at \(r=1\) are
	\[
	a(1)=0,\qquad a'(1)=-2,\qquad a''(1)=-10,
	\qquad a^{(3)}(1)=-24,\qquad a^{(4)}(1)=-24,
	\]
	and
	\[
	a^{(i)}(1)=0,\qquad i\ge5.
	\]
	Moreover,
	\[
	b(1)=2(n-2),\qquad b'(1)=4n-10,
	\]
	\[
	b''(1)=6(n-3),\qquad b^{(3)}(1)=6(n-3),
	\]
	and
	\[
	b^{(i)}(1)=0,\qquad i\ge4.
	\]
	Finally,
	\[
	c(1)=0,\qquad c'(1)=-2,\qquad c''(1)=-2,
	\]
	and
	\[
	c^{(i)}(1)=0,\qquad i\ge3.
	\]
	In \eqref{eq:t-recursion-general}, the coefficient of \(R_\ell^{(j)}(1)\) is
	\[
	(j-1)a'(1)+b(1)
	=
	-2(j-1)+2(n-2)
	=
	2(n-j-1).
	\]
	Therefore, for \(j\ge2\),
	\begin{align}\label{eq:t-recursion}
		2(n-j-1)R_\ell^{(j)}(1)
		={}&
		\Bigl[
		10\binom{j-1}{2}
		-(4n-10)(j-1)
		\Bigr]R_\ell^{(j-1)}(1)
		\nonumber\\
		&+
		\Bigl[
		24\binom{j-1}{3}
		-6(n-3)\binom{j-1}{2}
		\Bigr]R_\ell^{(j-2)}(1)
		\nonumber\\
		&+
		\Bigl[
		24\binom{j-1}{4}
		-6(n-3)\binom{j-1}{3}
		\Bigr]R_\ell^{(j-3)}(1)
		\nonumber\\
		&-
		2\lambda_\ell(j-1)R_\ell^{(j-2)}(1)
		-
		2\lambda_\ell\binom{j-1}{2}R_\ell^{(j-3)}(1),
	\end{align}
	with the convention that \(R_\ell^{(q)}(1)=0\) for \(q<0\).

	Since \(p_j(\lambda_\ell)\) satisfies the same recursive formula as \(R_\ell^{(j)}(1)\), with the same initial data, we conclude that
	\[
	R_\ell^{(j)}(1)=p_j(\lambda_\ell).
	\]
	Thus
	\[
	\partial_\nu^jP(f)
	=
	\sum_{\ell,k}f_{\ell,k}p_j(\lambda_\ell)Y_{\ell,k}
	=
	T_j(f).
	\]
	Consequently,
	\[
	B^{2m-1}_j(P(f))=\partial_\nu^jP(f)-T_j(f)=0,
	\qquad j=1,\dots,m-1,
	\]
	and clearly
	\[
	B^{2m-1}_0(P(f))=f.
	\]
	This proves the claim.
\end{pf}

\begin{rem}\label{rem:compare-Case-m2m3}
For $m=2,3$, the boundary operators $B_j^{2m-1}$ defined by \eqref{Boundary Operator}
agree with the corresponding lower-order boundary operators constructed by Case \cite{J.Case} and Case--Luo \cite{Case&Luo} on the round ball.
\begin{itemize}
    \item 

When $m=2$, then $n=4$,  one has $p_1(x)=0$,
therefore,
\[
B_1^3u=\partial_\nu u,
\]
which is exactly Case's boundary operator $B_1^3$ in the critical four-dimensional case, see \cite{J.Case}.

\item When $m=3$, then $n=6$,  we obtain $p_1(x)=0, p_2(x)=-\frac{x}{3}$, 
it follows that
\[
B_1^5u=\partial_\nu u,
\qquad
B_2^5u=\partial_\nu^2u-\frac13\Delta_{\S^5}u,
\]
which coincides with the corresponding formulas of Case--Luo \cite{Case&Luo} in the critical six-dimensional case.
\end{itemize}
Hence, for $m=2,3$, the boundary operators obtained from the hyperbolic Poisson extension coincide with the lower-order Case-type conformally covariant boundary operators on the round ball.
\end{rem}

        \subsection{Proof of Theorem \ref{Thm 3}}

 \begin{lem}\label{lem:reverse-Chebyshev}
	Let $I\subset \R$ be an interval, and let $\nu$ be a finite positive measure on $I$ with
	\[
	0<\nu(I)<\infty.
	\]
	Assume that $u,v,uv\in L^1(I,\nu)$, where $u$ is nondecreasing on $I$ and $v$ is nonincreasing on $I$.
	Then
	\begin{equation}\label{eq:reverse-Chebyshev}
		\int_I u(r)v(r)\,\ud\nu(r)
		\le
		\frac{1}{\nu(I)}
		\left(\int_I u(r)\,\ud\nu(r)\right)
		\left(\int_I v(r)\,\ud\nu(r)\right).
	\end{equation}
\end{lem}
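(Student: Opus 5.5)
This is Chebyshev's integral inequality for oppositely monotone functions, and I will prove it with the standard symmetrization (double-integral) trick. For $r,s\in I$, since $u$ is nondecreasing and $v$ is nonincreasing, the differences $u(r)-u(s)$ and $v(r)-v(s)$ have opposite signs (or one of them vanishes). Hence
\[
\bigl(u(r)-u(s)\bigr)\bigl(v(r)-v(s)\bigr)\le 0\qquad\text{for all } r,s\in I .
\]
I will integrate this pointwise inequality against the product measure $\ud\nu(r)\,\ud\nu(s)$ on $I\times I$.

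Expanding the product gives four terms:
\[
u(r)v(r)+u(s)v(s)-u(r)v(s)-u(s)v(r).
\]
Each term is integrable with respect to $\nu\otimes\nu$, because $u,v,uv\in L^1(I,\nu)$ and $\nu(I)<\infty$. Specifically, $\int\!\!\int |u(r)v(r)|\,\ud\nu\,\ud\nu=\nu(I)\|uv\|_{L^1}$, and $\int\!\!\int |u(r)v(s)|=\|u\|_{L^1}\|v\|_{L^1}$ by Tonelli. Fubini then lets me integrate term by term, which yields
\[
2\nu(I)\int_I uv\,\ud\nu-2\int_I u\,\ud\nu\int_I v\,\ud\nu\le 0 .
\]
Dividing by $2\nu(I)>0$ gives \eqref{eq:reverse-Chebyshev}.

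The only point needing care is the integrability check that justifies splitting the double integral into four separate pieces. This is why the hypothesis $uv\in L^1$ is included alongside $u,v\in L^1$. Without it, the term $\int\!\!\int u(r)v(r)\,\ud\nu(r)\,\ud\nu(s)$ might not be finite, and the expansion would be of the form $\infty-\infty$. With all three functions integrable and $\nu$ finite, every term is absolutely integrable, and the argument goes through directly. No result earlier in the paper is needed.
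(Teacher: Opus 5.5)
Your proposal is correct and follows the paper's proof essentially verbatim: the same pointwise inequality $\bigl(u(r)-u(s)\bigr)\bigl(v(r)-v(s)\bigr)\le 0$, integrated against $\nu\otimes\nu$ and expanded. Your Tonelli/Fubini check that each of the four terms is integrable (which is where $uv\in L^1$ is used) is a justification the paper leaves implicit.
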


\begin{pf}
	Since $u$ is nondecreasing and $v$ is nonincreasing, for every $r,s\in I$ we have
	\[
	(u(r)-u(s))(v(r)-v(s))\le 0.
	\]
	Integrating this inequality over $I\times I$ with respect to $\nu\otimes \nu$, we obtain
	\[
	\iint_{I\times I}(u(r)-u(s))(v(r)-v(s))\,\ud\nu(r)\ud\nu(s)\le 0.
	\]
	Expanding the integrand gives
	\begin{align*}
		0
		&\ge
		\iint_{I\times I}
		\bigl(u(r)v(r)+u(s)v(s)-u(r)v(s)-u(s)v(r)\bigr)\,\ud\nu(r)\ud\nu(s) \\
		&=
		2\nu(I)\int_I u(r)v(r)\,\ud\nu(r)
		-2\left(\int_I u(r)\,\ud\nu(r)\right)\left(\int_I v(r)\,\ud\nu(r)\right).
	\end{align*}
	Hence
	\[
	\nu(I)\int_I uv\,\ud\nu
	\le
	\left(\int_I u\,\ud\nu\right)\left(\int_I v\,\ud\nu\right),
	\]
	which is exactly \eqref{eq:reverse-Chebyshev}.
\end{pf}

Let $\alpha=\frac{n}{2\sigma-1}$. Then Theorem \ref{Thm 3} reduces to the following statement. Our approach is partly inspired by Huber's original proof \cite{Huber}, although several essential modifications and adjustments are needed in our setting.

\begin{thm}\label{thm:Huber}
	Let $n=2m\ge 2$, and let $F\in \mathcal H_f$. Set $\alpha=\frac{n}{2\sigma-1}>n-1$ and 
	\[
	\gamma:=\int_{\B^n}\bigl((-\Delta)^mF(\xi))^{+}\ud \xi\qquad \mathrm{and}\qquad \beta:=\alpha k_n^{-1} \gamma<n,	\]
	 then we have the following high dimensional weighted Huber isoperimetric inequality
	\begin{align}
		\left(\int_{\B^n}
		e^{\alpha F(\xi)}
		\ud \nu\right)^{1/\alpha}
		\le
		M_{\alpha,\beta;I,J_m}
		\left(
		\int_{\S^{n-1}}e^{(n-1)f}\ud \mu
		\right)^{1/(n-1)},
		\label{eq:Huber-final}
	\end{align}
	where 
    \begin{align*}
        \ud \nu=\frac{(1-|\xi|^2)^{\alpha-n}
		e^{2\alpha  I(|\xi|^2)}
		\,d\xi}{\int_{\B^n}(1-|\xi|^2)^{\alpha-n}
		e^{2\alpha  I(|\xi|^2)}
		\,d\xi}
    \end{align*}
    and
	\[
	M_{\alpha,\beta;I,J_m}
	=
	\left[\frac{
		\displaystyle\int_0^1
		r^{n-1-\beta}(1-r^2)^{\alpha-n}
		e^{2\alpha  I(r^2)+\beta J_m(r^2)}\ud r
	}{
		\displaystyle\int_0^1
		r^{n-1}(1-r^2)^{\alpha-n}
		e^{2\alpha I(r^2)}\ud r
	}\right]^{\frac{1}{\alpha}} .
	\]
\end{thm}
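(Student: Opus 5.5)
We start from the Green representation of Theorem \ref{thm:Green}, which gives $F=P(f)+\int_{\B^n}G_n(\cdot,\zeta)(-\Delta)^mF(\zeta)\,\ud\zeta$. Since $G_n\ge0$, we can replace $(-\Delta)^mF$ by its positive part and obtain $F\le P(f)+\int_{\B^n}G_n(\cdot,\zeta)\,\ud\hat\mu(\zeta)$, where $\ud\hat\mu=((-\Delta)^mF)^+\ud\zeta$ has total mass $\gamma$. Set $\ud\rho=\ud\hat\mu/\gamma$, a probability measure (the case $\gamma=0$ is Theorem \ref{Sec 2 thm} directly). Then
\[
e^{\alpha F}\le e^{\alpha P(f)}\exp\Bigl(\int_{\B^n}\beta k_n G_n(\cdot,\zeta)\,\ud\rho(\zeta)\Bigr)\le\int_{\B^n}e^{\alpha P(f)}e^{\beta k_nG_n(\cdot,\zeta)}\,\ud\rho(\zeta)
\]
by Jensen's inequality, since $\alpha\gamma=\beta k_n$. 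Integrating against $\ud\nu$ and using Fubini, it suffices to show that for each fixed $\zeta\in\B^n$,
\[
\int_{\B^n}e^{\alpha P(f)}e^{\beta k_nG_n(\cdot,\zeta)}\,\ud\nu\le M^{\alpha}\Bigl(\int_{\S^{n-1}}e^{(n-1)f}\ud\mu\Bigr)^{\alpha/(n-1)}.
\]

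Next we use conformal invariance to move $\zeta$ to the origin. By \eqref{Green}, $k_nG_n(\xi,\zeta)=\log|\Psi_\zeta(\xi)|^{-1}+J_m(|\Psi_\zeta(\xi)|^2)$, which depends only on $|\Psi_\zeta(\xi)|$. Substitute $\xi=\Psi_{-\zeta}(w)$ and replace $f$ by $f_{\Psi}$ as in the conformal invariance theorem of Section \ref{Sec 3}, namely identity \eqref{Conformal thm equ-a} together with $P(f\circ\Psi)=P(f)\circ\Psi$. The weighted measure $e^{\alpha P(f)}\ud\nu$ then transforms into $e^{\alpha P(\tilde f)}\ud\nu$, with $\int e^{(n-1)\tilde f}\ud\mu=\int e^{(n-1)f}\ud\mu$; here the exponent $\alpha$ corresponds to $m_\sigma$ after rescaling $f\mapsto(n-1)f$. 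The weight becomes radial: $e^{\beta k_nG_n}=r^{-\beta}e^{\beta J_m(r^2)}$ with $r=|w|$.

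We then reduce to the radial problem. Write $\ud\nu=c\,r^{n-1}(1-r^2)^{\alpha-n}e^{2\alpha I(r^2)}\ud r\,\ud\theta$, and let $A(r)=\int_{\S^{n-1}}e^{\alpha P(\tilde f)(r\theta)}\ud\mu(\theta)$. Apply Lemma \ref{lem:reverse-Chebyshev} on $I=[0,1)$ with the probability measure $\ud\nu_r$ (the radial marginal of $\ud\nu$), taking $u=A(r)$ and $v=r^{-\beta}e^{\beta J_m(r^2)}$. To apply the lemma we must check two monotonicity facts.

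\begin{itemize}
\item $A$ is nondecreasing. Here $P(\tilde f)$ is hyperbolically harmonic, so $e^{\alpha P(\tilde f)}$ is hyperbolically subharmonic, and spherical means of hyperbolic subharmonic functions over Euclidean spheres centered at $0$ (which are hyperbolic spheres) increase with the radius.
\item $v$ is nonincreasing. We have $\frac{\ud}{\ud r}\log v=-\beta/r-\beta r\frac{(1-r^2)^{m-1}-1}{r^2}=-\beta(1-r^2)^{m-1}/r\le0$.
\end{itemize}

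The lemma then gives
\[
\int A v\,\ud\nu_r\le\Bigl(\int A\,\ud\nu_r\Bigr)\Bigl(\int v\,\ud\nu_r\Bigr).
\]
The first factor is bounded by Theorem \ref{Sec 2 thm}, applied to $(n-1)\tilde f$ with $m_\sigma(n-1)=\alpha$. The second factor equals exactly $M^\alpha$, the ratio of integrals defining $M_{\alpha,\beta;I,J_m}$, and it is finite because $\beta<n$.

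The main obstacle is the conformal change step: checking precisely that \eqref{Conformal thm equ-a} transports $e^{\alpha P(f)}\ud\nu$ to $e^{\alpha P(\tilde f)}\ud\nu$ for a general $\alpha$ (equivalently $\sigma$), with all constants consistent. The integrability needed for Fubini and for the lemma, given $\beta<n$, is a secondary point.
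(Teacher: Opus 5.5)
Your proposal is correct and follows the paper's proof essentially step for step. The shared steps are: the Green representation with $G_n\ge0$; Jensen against $((-\Delta)^mF)^+\ud\zeta/\gamma$; the conformal substitution $\xi=\Psi_{-\zeta}(w)$, justified pointwise by \eqref{Conformal thm equ-a}; the reverse Chebyshev inequality of Lemma \ref{lem:reverse-Chebyshev} with radial weight $r^{-\beta}e^{\beta J_m(r^2)}$; and finally Theorem \ref{Sec 2 thm} together with the invariance of $\int_{\S^{n-1}}e^{(n-1)f}\ud\mu$. The step you flag as the main obstacle is already settled, since \eqref{Conformal thm equ-a} holds for every admissible $\sigma$. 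The only cosmetic difference is that you quote the monotonicity of spherical means of the hyperbolically subharmonic function $e^{\alpha P(\tilde f)}$ as a standard fact, whereas the paper derives it from the divergence theorem applied to $\operatorname{div}\bigl((1-|\xi|^2)^{2-n}\nabla e^{\alpha P(f_{-\zeta})}\bigr)\ge0$.
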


\begin{pf}
	By Theorem \ref{thm:Green},
	\[
	F(\xi)
	=
	P(f)(\xi)
	+
	\int_{\B^n}
	G_n(\xi,\zeta)\,(-\Delta)^mF(\zeta)\ud \zeta.
	\]
	Since $G_n\ge 0$ and
	\[
	(-\Delta)^mF=((-\Delta)^mF)^{+}-((-\Delta)^mF)^{-},
	\]
	for $\gamma>0$, set $\ud \hat{\nu}(\zeta):=\frac{((-\Delta)^mF)^{+}(\zeta)}{\gamma}\,\ud \zeta$ , we get
	\[
	F(\xi)
	\le
	P(f)(\xi)
	+
	\gamma\int_{\B^n}
	G_n(\xi,\zeta)\,\ud \hat{\nu}(\zeta).
	\]
	Therefore, using \eqref{Green}, we can see
	\[
	e^{\alpha F(\xi)}
	\le
	e^{\alpha P(f)(\xi)}
	\exp\left(
	\beta\int_{\B^n}
	\left(
	\log \frac1{|\Psi_\zeta(\xi)|}
	+
	J_m(|\Psi_\zeta(\xi)|^2)
	\right)\,\ud \hat{\nu}(\zeta)
	\right).
	\]
	Since the exponential is convex, Jensen's inequality yields
	\[
	e^{\alpha F(\xi)}
	\le
	e^{\alpha P(f)(\xi)}
	\int_{\B^n}
	|\Psi_\zeta(\xi)|^{-\beta}
	e^{\beta J_m(|\Psi_\zeta(\xi)|^2)}
	\,\ud \hat{\nu}(\zeta).
	\]
	Multiply both sides by $(1-|\xi|^2)^{\alpha-n}e^{2\alpha I(|\xi|^2)}$
	and integrate:
	\begin{align}
		\int_{\B^n} e^{\alpha F(\xi)}(1-|\xi|^2)^{\alpha-n}e^{2\alpha I(|\xi|^2)}\,d\xi
		\le
		\int_{\B^n}\mathcal I_\zeta\,\ud \hat{\nu}(\zeta),
		\label{eq:jensen-reduction}
	\end{align}
	where
	\[
	\mathcal I_\zeta
	:=
	\int_{\B^n}
	e^{\alpha P(f)(\xi)}(1-|\xi|^2)^{\alpha-n}e^{2\alpha I(|\xi|^2)}
	|\Psi_\zeta(\xi)|^{-\beta}
	e^{\beta J_m(|\Psi_\zeta(\xi)|^2)}\,d\xi.
	\]
	
	\medskip
	Next, we will focus on how to estimate the term $\mathcal I_\zeta$. The basic idea is using the conformal invariance to change $|\Psi_{\zeta}(\xi)|$ into $|\xi'|$. Fix $\zeta\in\B^n$ and make the change of variables $\xi=\Psi_a(\xi')$. By the formulas \eqref{conformal equ-a},
	\[
	1-|\xi|^2=\frac{1-|a|^2}{[a,\xi']}(1-|\xi'|^2),
	\qquad
	d\xi=\left(\frac{1-|a|^2}{[a,\xi']}\right)^n\,d\xi',
	\]
	and, if $\eta\in \S^{n-1}$, we can also define
	\[
	f_a(\eta):=f\circ\Psi_a(\eta)+\log \frac{1-|a|^2}{[a,\eta]},
	\]
	then using Lemma \ref{Conformal lem 2} and $P(f\circ\Psi_a)=P(f)\circ \Psi_a$, we can see
	\[
	 P(f)(\xi)= P(f_a)(\xi')-\log \frac{1-|a|^2}{[a,\xi']}+2I(|\xi'|^2)-2I(|\Psi_a(\xi')|^2).
	\]
	Using these identities, the factors involving 
    extra term cancel exactly, and we obtain
\begin{align*}
    &e^{\alpha P(f)(\xi)}(1-|\xi|^2)^{\alpha-n}e^{2\alpha I(|\xi|^2)}
	|\Psi_\zeta(\xi)|^{-\beta}
	e^{\beta J_m(|\Psi_\zeta(\xi)|^2)}\ud \xi\\
    =&e^{\alpha P(f_a)(\xi')}(1-|\xi'|^2)^{\alpha-n}e^{2\alpha I(|\xi'|^2)}
	|\Psi_\zeta\circ\Psi_a(\xi')|^{-\beta}
	e^{\beta J_m(|\Psi_\zeta\circ \Psi_a(\xi')|^2)}\ud \xi'\end{align*}
   Since $\Psi_{\zeta}^{-1}=\Psi_{-\zeta}$, choosing $a=-\zeta$, it follows that 
	\begin{align*}
	\mathcal I_\zeta
	=&
	\int_{\B^n}
	e^{\alpha  P (f_{-\zeta})(\xi')}
	(1-|\xi'|^2)^{\alpha-n}
	e^{2\alpha I(|\xi'|^2)}
	|\xi'|^{-\beta}e^{\beta J_m(|\xi'|^2)}
	\,d\xi'\\
    =&\int_{0}^{1}\left(\int_{\S^{n-1}} e^{\alpha P(f_{-\zeta})(r\omega)}\,d\omega\right)
	\left(r^{-\beta}e^{\beta J_m(r^2)} \right) r^{n-1}(1-r^2)^{\alpha-n}
	e^{2\alpha I(r^2)}\ud r \\
    :=&\int_{0}^{1}M_{\zeta}(r)
	e^{\beta g_m(r)} r^{n-1}(1-r^2)^{\alpha-n}
	e^{2\alpha I(r^2)}\ud r,     \end{align*}
	where
	\[
	M_{\zeta}(r):=\int_{\S^{n-1}} e^{\alpha P f_{-\zeta}(r\omega)} \,d\omega \qquad\mathrm{and}\qquad g_m(r):=\log\frac1r+J_m(r^2).
	\]
    Next, we claim that 
    \begin{align}\label{Sec 5 cliam}
        M'_{\zeta}(r)\geq 0\qquad \mathrm{and}\qquad g'_m(r)<0.    \end{align}
	For the first inequality in \eqref{Sec 5 cliam}, since $P(f_{-\zeta})$ is hyperbolic harmonic,
	\[
	\operatorname{div}\!\left((1-|\xi|^2)^{2-n}\nabla  (P(f_{-\zeta}))\right)=0.
	\]
	Therefore
	\begin{align}\label{Sec 5.2 euq-q}
	    \operatorname{div}\!\left((1-|\xi|^2)^{2-n}\nabla e^{\alpha P(f_{-\zeta})}\right)
	=
	\alpha^2(1-|\xi|^2)^{2-n}e^{\alpha P(f_{-\zeta})}|\nabla  P(f_{-\zeta})|^2\ge 0.
	\end{align}
	
	By the divergence theorem,
	\begin{align*}
	(1-r^2)^{2-n}r^{n-1}M_{\zeta}'(r)
	=&
	\int_{\partial B_r}
	(1-|\xi|^2)^{2-n}\partial_\nu e^{\alpha P(f_{-\zeta})}\,d\sigma\\
	=&
	\int_{B_r}
	\operatorname{div}\!\left((1-|\xi|^2)^{2-n}\nabla e^{\alpha P(f_{-\zeta})}\right)\,d\xi
	\ge 0.
	\end{align*}
	Hence
	$
	M_{\zeta}'(r)\ge 0$ for $0<r<1.
	$
	That is, $M_{\zeta}$ is nondecreasing.
	
	To achieve the second inequality in \eqref{Sec 5 cliam}, by the integral formula for $J_m$,
	\[
	g_m(r)=\frac12\int_{r^2}^{1}\frac{(1-s)^{m-1}}{s}\ud s,
	\]
	hence
	\[
	g_m'(r)=-\frac{(1-r^2)^{m-1}}{r}<0.
	\]
Since $M_{\zeta}$ is nondecreasing and $g_m$ is nonincreasing, then Lemma \ref{lem:reverse-Chebyshev} yields
	\begin{align}\label{Sec 5 equ-c}
	    \mathcal I_\zeta
	\le&
	M^{\alpha}_{\alpha,\beta;I,J_m}\int_{0}^{1}M_{\zeta}(r)
	 r^{n-1}(1-r^2)^{\alpha-n}
	e^{2\alpha I(r^2)}\ud r\nonumber \\
    =&M^{\alpha}_{\alpha,\beta;I,J_m}\int_{\B^n}
	e^{\alpha P (f_{-\zeta})(\xi')}
	(1-|\xi'|^2)^{\alpha-n}
	e^{2\alpha I(|\xi'|^2)}
	\,d\xi'	\end{align}
	with
	\[
	M^{\alpha}_{\alpha,\beta;I,J_m}
	=
	\frac{
		\displaystyle\int_0^1 r^{n-1-\beta}(1-r^2)^{\alpha-n}
		e^{2\alpha I(r^2)+\beta J_m(r^2)}\ud r
	}{
		\displaystyle\int_0^1 r^{n-1}(1-r^2)^{\alpha-n}
		e^{2\alpha I(r^2)}\ud r
	}.
	\]
	The condition $\beta<n$ is exactly the integrability condition near $r=0$. 
    Using \eqref{Sec 5 equ-c} and \eqref{Intro Inequ} in Theorem \ref{Thm 1}, we can estimate 
    \begin{align*}
    \frac{\mathcal I_\zeta}{\int_{\B^n} (1-|\xi'|^2)^{\alpha-n}e^{2\alpha I(|\xi'|^2)}\,d\xi'}
    &\le M^{\alpha}_{\alpha,\beta;I,J_m}\int_{\B^n}
		e^{\alpha  P(f_{-\zeta})}\ud\nu
        \le M^{\alpha}_{\alpha,\beta;I,J_m}\|e^{f_{-\zeta}}\|_{L^{n-1}(\S^{n-1},\ud \mu)}^{\alpha}\\
        &=	M^{\alpha}_{\alpha,\beta;I,J_m}\|e^{f}\|_{L^{n-1}(\S^{n-1},\ud \mu)}^{\alpha},\quad\forall \zeta\in\B^n,
    \end{align*}
    where we have used 
    \[
	\int_{\S^{n-1}} e^{(n-1)f_{-\zeta}}\ud \mu
	=
	\int_{\S^{n-1}} e^{(n-1)f}\ud \mu, \quad\forall \zeta \in\B^n
	\]
    to obtain the last equality. Inserting this estimate into \eqref{eq:jensen-reduction}, we get  \eqref{eq:Huber-final}.
	
	It remains to discuss the equality. If \(\gamma=0\), then \((-\Delta)^mF\le 0\) a.e. in \(\mathbb B^n\). Since \(G_n\ge0\), the Green representation gives
\[
F(\xi)\le P(f)(\xi).
\]
Therefore the desired inequality follows directly from Theorem \ref{Thm 1}, with \(M_{\alpha,0;I,J_m}=1\).
Moreover, equality forces \(F=P(f)\) and \(f\) to be an extremal of Theorem \ref{Thm 1}.

 If $\gamma>0$ and $F$ is smooth, the equality in the Jensen inequality would force the probability measure $\hat{\nu}$ to be a Dirac mass, which is impossible since $\hat{\nu}$ is absolutely continuous with respect to $\ud \zeta$. Hence the equality cannot occur in the smooth class.
	
	Finally, in the enlarged singular class allowing one interior atom, for example, 
    \begin{align*}
         F=P(f)+\int_{\B^n} G_n(\xi,\zeta)\ud \hat{\mu}(\zeta)\quad\mathrm{where}~ \ud \hat{\mu} \mathrm{~is~ a~ Radon~ measure}.   
    \end{align*}
 In the enlarged singular class, suppose equality holds. Then equality in the first estimate
\[
F\le P(f)+G_n*\hat\mu_+
\]
forces \(\hat\mu_-=0\). Equality in Jensen's inequality forces
$
\hat\mu_+=\gamma\delta_{\zeta_0}
$
for some \(\zeta_0\in\mathbb B^n\). For this fixed \(\zeta_0\), equality in Lemma \ref{lem:reverse-Chebyshev} implies that
\(M_{\zeta_0}\) is constant. Since
\[
(1-r^2)^{2-n}r^{n-1}M_{\zeta_0}'(r)
=
\alpha^2\int_{B_r}
(1-|\xi|^2)^{2-n}
e^{\alpha P(f_{-\zeta_0})}
|\nabla P(f_{-\zeta_0})|^2\,d\xi,
\]
we obtain
\[
\nabla P(f_{-\zeta_0})\equiv0.
\]

Hence \(f_{-\zeta_0}\equiv c\). On the other hand, by Theorem \ref{Thm 1}, the equality condition also forces \(f=f_{a,c}\). Therefore, necessarily \(a=\zeta_0\). Hence the singular extremals are precisely
\[
F_{a,c}(\xi)=P(f_{a,c})(\xi)+\gamma G_n(\xi,a).
\]
Conversely, these functions make every step above an equality.

To prove the sharpness in the original smooth class, one approximates \(\gamma\delta_a\) by a sequence of nonnegative smooth functions \(\rho_\varepsilon\in C^{\infty}_c(\B^n)\to \gamma\delta_a\) satisfying
$
\int_{\B^n}\rho_\varepsilon=\gamma,
$
and defines
\[
F_\varepsilon(\xi):=P(f_{a,c})(\xi)+\int_{\B^n}G_n(\xi,\zeta)\rho_\varepsilon(\zeta)\,\ud \zeta.
\]
Then each $F_\varepsilon$ belongs to the admissible smooth class and satisfies
$
\int_{\B^n}((-\Delta)^mF_\varepsilon)^{+}\,\ud \zeta=\gamma.
$
Moreover, $F_\varepsilon\to F_{a,c}$ pointwise, hence, by the Fatou's Lemma,
\[
\liminf_{\e\to 0}\frac{\|e^{F_\varepsilon}\|_{L^{\frac n{2\sigma-1}}(\B^n,\ud\nu)}}
{\|e^{f_{a,c}}\|_{L^{n-1}(\S^{n-1},\ud\mu)}}
\geq 
M_{\alpha,\beta;I,J_m}.
\]
 On the other hand, for smooth class, we also have 
 \begin{align*}
     \limsup_{\e\to 0}\frac{\|e^{F_\varepsilon}\|_{L^{\frac n{2\sigma-1}}(\B^n,\ud\nu)}}
{\|e^{f_{a,c}}\|_{L^{n-1}(\S^{n-1},\ud\mu)}}
\leq 
M_{\alpha,\beta;I,J_m}. \end{align*}
 Therefore, the constant $M_{\alpha,\beta;I,J_m}$ cannot be replaced by any smaller one.
\end{pf}
	\begin{rem}
	    When $\sigma=1$, equivalently $\alpha=n$, the quantity $\|e^{F}\|_{L^n(\B^n,\ud \nu)}$ is also conformally invariant. More precisely, if $F|_{\S^{n-1}}=f$, then under the conformal transformation $\Psi_a$ we define
\begin{align*}
    F_{\Psi_a}(\xi)=F\circ \Psi_{a}(\xi)+\log \frac{1-|a|^2}{[a,\xi]}+2I(|\Psi_a(\xi)|^2)-2I(|\xi|^2),
\end{align*}
which can also be characterized by $
\Psi_a^{*}(e^{nF}\ud V_{g^*})=e^{nF_{\Psi_a}}\ud V_{g^*},
$
where $g^*=e^{2\tilde{I}_n(|\xi|^2)}|\ud \xi|^2$ is the adapted metric. If $\xi\in \S^{n-1}$, then this reduces exactly to the conformal transformation of $f$ with respect to $L^{n-1}(\S^{n-1},\ud \mu)$, namely,
\begin{align*}
    F_{\Psi_a}\big|_{\S^{n-1}}(\eta)=f\circ \Psi_{a}(\eta)+\log \frac{1-|a|^2}{|a-\eta|^2}=f_{\Psi_a}(\eta).
\end{align*}
	\end{rem}

Lastly, in the case $m=2$ and $\gamma=0$, we point out that if $F\notin \mathcal{H}_f$, then the desired weighted Huber isoperimetric inequality may fail. Notice that $M_{\alpha,\beta;I,J_m}=1$ if $\gamma=0$ or equivalently $\beta=0$.
	
	\begin{lem}\label{Sec 5 Lem-2}
		For $n=4$ and $f=0$ on $\S^{3}$, there exists a function $F$ satisfying $\Delta^2F=0$, $F\big|_{\S^3}=f=0$ and $F\notin \mathcal{H}_{f}$ such that, for any $\sigma\in\left(\frac{1}{2},\frac{7}{6}\right)$, we have
		\begin{align}
			\|e^{F}\|_{L^{\frac{4}{2\sigma-1}}(\B^4,\ud \nu)}	> \|e^{f}\|_{L^{3}(\S^{3},\ud \mu)},
			\label{Intro:Huber-final-2}
		\end{align}
		where 
		\[
		\ud \nu=\frac{(1-|\xi|^2)^{\frac{8(1-\sigma)}{2\sigma-1}}e^{\frac{8}{2\sigma-1}I(|\xi|^2)}\ud \xi}{\int_{\B^4}(1-|\xi|^2)^{\frac{8(1-\sigma)}{2\sigma-1}}e^{\frac{8}{2\sigma-1}I(|\xi|^2)}\ud \xi }.
		\]
	\end{lem}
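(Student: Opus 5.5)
The plan is to take the simplest biharmonic function that vanishes on $\S^3$ but has nonzero normal derivative there, namely
\[
F(\xi)=c\,(1-|\xi|^2),\qquad c>0 .
\]
Since $F$ is a quadratic polynomial, $\Delta F=-8c$ is constant and hence $\Delta^2F=0$ in $\B^4$. Also $F|_{\S^3}=0=f$. For $n=4$ we have $m=2$, and Remark \ref{rem:compare-Case-m2m3} (with $p_1\equiv 0$) gives $B_1^{3}u=\partial_\nu u$. Here $\partial_\nu F=-2c\neq 0$ on $\S^3$, so $B^3_1F\neq 0$ and therefore $F\notin\mathcal H_f$. Equivalently, $F\neq P(0)=0$, which by the uniqueness part of Lemma \ref{Sec 5 lem} also shows that $F$ cannot satisfy the conformal boundary conditions.

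Next I check that the measure $\ud\nu$ is a well-defined probability measure for every $\sigma\in(\frac12,\frac76)$. The weight $(1-|\xi|^2)^{\frac{8(1-\sigma)}{2\sigma-1}}$ is integrable on $\B^4$ exactly when $\frac{8(1-\sigma)}{2\sigma-1}>-1$, which for $n=4$ is the condition $\sigma<\frac{7}{6}=\frac{2n-1}{2(n-1)}$. The factor $e^{\frac{8}{2\sigma-1}I(|\xi|^2)}$ is bounded above and below, since $I(x)=-x/4$ when $n=4$. Hence the normalization is finite and positive, and $\int_{\B^4}\ud\nu=1$.

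The inequality itself is then immediate. We have $F>0$ on the open ball, which has full $\nu$-measure, because $\nu$ is absolutely continuous with a positive density. Writing $q=\frac{4}{2\sigma-1}$, this gives
\[
\|e^{F}\|_{L^{q}(\B^4,\ud\nu)}^{q}=\int_{\B^4}e^{qc(1-|\xi|^2)}\,\ud\nu>\int_{\B^4}\ud\nu=1 .
\]
On the other side, $\|e^{f}\|_{L^3(\S^3,\ud\mu)}=\|1\|_{L^3(\S^3,\ud\mu)}=1$, because $\mu$ is normalized. This yields \eqref{Intro:Huber-final-2}.

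There is no real obstacle here. The only point that needs care is the identification $B^3_1=\partial_\nu$, which justifies that $F\notin\mathcal H_f$, and this is already recorded in Remark \ref{rem:compare-Case-m2m3}. The example shows that without the boundary condition $B^{3}_1F=0$ the sharp constant $M=1$ (the case $\gamma=0$) fails: here $\gamma=0$ because $\Delta^2F\equiv0$. Letting $c\to\infty$ shows that no finite constant can work at all.
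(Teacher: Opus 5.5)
Your proposal is correct and matches the paper's proof, which takes the same example $F=1-|\xi|^2$ (your case $c=1$) and uses $B^3_1=\partial_\nu$ together with $F>0$ in $\B^4$. Your added remark that letting $c\to\infty$ rules out any finite constant is a valid strengthening.
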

	\begin{pf}
		The idea is very simple: it suffices to choose
		$
		F(\xi)=1-|\xi|^2.
		$
		For $n=4$, we have $B^3_0(F)=f=0$ and $B^3_1(F)=\partial_{\nu}F$. Moreover, $F$ solves the boundary value problem
		\begin{align}\label{eq:ball-bvp-2}
			\begin{cases}
				(-\Delta)^2F=0 & \mathrm{in}~\ \B^4,\\[0.3em]
				F=0 & \mathrm{on}~\ \S^{3},\\[0.3em]
				\partial_{\nu}F=-2, & \mathrm{on}~\ \S^{3}.
			\end{cases}
		\end{align}
		Hence $F\notin \mathcal{H}_f$. On the other hand, since $F(\xi)=1-|\xi|^2>0$ in $\B^4$, it follows immediately that
		\begin{align*}
			\|e^{F}\|_{L^{\frac{4}{2\sigma-1}}(\B^4,\ud \nu)}	>1= \|e^{f}\|_{L^{3}(\S^{3},\ud \mu)}.
		\end{align*}
		This proves the claim.
	\end{pf}

	\textbf{Proof of Corollary \ref{cor:Huber_domain}}: 
Without loss of generality, we may assume that $\Omega$ is smooth; otherwise, we can approximate it by smooth domains.
		Let \(\Psi:\B^2\to\Omega\) be conformal. Since \(\partial\Omega\) is
		smooth, \(\Psi\) extends smoothly to \(\overline{\B^2}\) and
		\(\Psi'\neq0\) on \(\overline{\B^2}\). Define
		\[
		f(\xi):=F(\Psi(\xi))+\log|\Psi'(\xi)|,
		\qquad \xi\in\B^2 .
		\]
		Then \(f\in C^2(\B^2)\cap C(\overline{\B^2})\). Since
		\(\log|\Psi'|\) is harmonic, we have
		\[
		\Delta f(\xi)
		=
		|\Psi'(\xi)|^2(\Delta F)(\Psi(\xi)).
		\]
		Therefore, by the change of variables \(x=\Psi(\xi)\),
		\[
		\int_{\B^2}(-\Delta f)^+\,d\xi
		=
		\int_{\Omega}(-\Delta F)^+\,dx
		=
		\gamma
		<
		2\pi(2\sigma-1).
		\]
		Applying the disk version of the inequality to \(f\), we get
		\[
		\left(
		\frac{3-2\sigma}{\pi(2\sigma-1)}
		\int_{\B^2}
		e^{\frac{2}{2\sigma-1}f(\xi)}
		(1-|\xi|^2)^{\frac{4(1-\sigma)}{2\sigma-1}}
		\,d\xi
		\right)^{\frac{2\sigma-1}{2}}
		\le
		\frac{c_\sigma}{2\pi}
		\int_{\partial\B^2}e^f\,ds .
		\]
		The boundary term transforms as
		\[
		\int_{\partial\B^2}e^f\,ds
		=
		\int_{\partial\B^2}
		e^{F(\Psi(\xi))}|\Psi'(\xi)|\,ds_\xi
		=
		\int_{\partial\Omega}e^F\,ds .
		\]
	 Hence
		\[
		\begin{aligned}
			&\int_{\B^2}
			e^{\frac{2}{2\sigma-1}f(\xi)}(1-|\xi|^2)^{\frac{4(1-\sigma)}{2\sigma-1}}\,d\xi \\
			&=
			\int_{\B^2}
			e^{\frac{2}{2\sigma-1}F(\Psi(\xi))}
			|\Psi'(\xi)|^{\frac{2}{2\sigma-1}}
			(1-|\xi|^2)^{\frac{4(1-\sigma)}{2\sigma-1}}\,d\xi \\
			&=
			\int_{\B^2}
			e^{\frac{2}{2\sigma-1}F(\Psi(\xi))}
			\bigl((1-|\xi|^2)|\Psi'(\xi)|\bigr)^{\frac{4(1-\sigma)}{2\sigma-1}}
			|\Psi'(\xi)|^2\,d\xi .
		\end{aligned}
		\]
		By the conformal radius formula,
		\[
		r_\Omega(\Psi(\xi))=(1-|\xi|^2)|\Psi'(\xi)|,
		\]
		and therefore, after changing variables \(x=\Psi(\xi)\),
		\[
		\int_{\B^2}
		e^{\frac{2}{2\sigma-1}f(\xi)}(1-|\xi|^2)^{\frac{4(1-\sigma)}{2\sigma-1}}\,d\xi
		=
		\int_\Omega e^{\frac{2}{2\sigma-1}F(x)}r_\Omega(x)^{\frac{4(1-\sigma)}{2\sigma-1}}\,dx .
		\]
		This proves the desired inequality.

\section{Appendix}\label{Appendix}
		In this appendix, we aim to prove that every extremal function of \eqref{Intro Inequ} must be of class $C^1$. This regularity is a crucial ingredient in the moving sphere argument; see \eqref{Sec 4 MS equ-b}. The main idea is that, via the stereographic projection $\mathcal{S}$, it suffices to establish the regularity of extremal functions for the inequality \eqref{Sec 3 Equ Rn Lem euq}.
		
		Let $f\in L^{\infty}(\S^{n-1})$ be an extremal function of \eqref{Intro Inequ}, and set
		\[
		u(x')= f\circ \mathcal{S}(x',0)+(n-1)f_0.
		\]
		By Lemma \ref{Sec 3 Equ Rn Lem}, we know that $u$ satisfies the Euler--Lagrange equation associated with \eqref{Sec 3 Equ Rn Lem euq}, namely,
		\begin{align}\label{Sec 4 E-L equ}
			e^{u(x')}=c'_{\sigma}\int_{\R^{n}_{+}}e^{m_{\sigma}\mathcal{P}(u)(y)}y_n^{\frac{2n(1-\sigma)}{2\sigma-1}}\mathscr{P}(x'-y',y_n)\ud y.
		\end{align}
		Here
		\begin{align*}
			\mathscr{P}(x',x_n) =\left(\frac{x_n}{|x'|^2+x_n^2}\right)^{n-1}.
		\end{align*}
		Moreover, by \eqref{Sec 4 Pf0} and Remark \ref{Sec 2 rem}, we have
		\begin{align*}
			e^{m_{\sigma}\mathcal{P}(u)}=&e^{m_{\sigma}\mathcal{P}(f\circ \mathcal{S})}e^{m_{\sigma}(n-1)\left[\log |\xi +\mathbf{e}_{n}|^2 -2I\left(1\right)+2I(|\xi|^2)\right]\circ \mathcal{S}-m_{\sigma}(n-1)\log2}\\
			=&e^{m_{\sigma}\mathcal{P}(f\circ \mathcal{S})+2m_{\sigma}(n-1)( I(|\xi|^2)-I\left(1\right) )\circ \mathcal{S}} \left(\frac{2}{ (1+y_n)^2+|y'|^2}\right)^{\frac{n}{2\sigma-1}} .
		\end{align*}
		Since $f\in L^\infty(\mathbb S^{n-1})$ and \eqref{Sec 4 Def Pu} implies $\mathcal{P}(f\circ \mathcal{S})\in L^\infty(\R^{n}_{+})$, together with Remark \ref{Sec 2 rem} we obtain
		$$e^{m_{\sigma}\mathcal{P}(f\circ \mathcal{S})+2m_{\sigma}(n-1)( I(|\xi|^2)-I\left(1\right) )\circ \mathcal{S}}\in L^\infty(\mathbb R^{n}_{+}).$$
		Motivated by \eqref{Sec 4 E-L equ}, we therefore introduce the integral operator
		\begin{align}\label{Sec 4 T}
			T_{\sigma}(v)(x')=\int_{\R^{n}_{+}}\frac{v(y) y_n^{\frac{2n(1-\sigma)}{2\sigma-1}}}{((1+y_n)^2+|y'|^2)^{\frac{n}{2\sigma-1}}}\left(\frac{y_n}{|x'-y'|^2+y_n^2}\right)^{n-1}\ud y
		\end{align}
		for $v\in L^{\infty}(\R^{n}_{+})$ and $\sigma\in\left(\frac{1}{2},\frac{2n-1}{2(n-1)}\right)$. For convenience, set
		\[
		\alpha:=\frac{n}{2\sigma-1},
		\qquad
		\theta=\frac{2n(1-\sigma)}{2\sigma-1},
		\]
		and introduce the function
		\begin{align*}
			\mathscr{P}_{x_n}(x'):=\mathscr{P}(x',x_n) =\left(\frac{x_n}{|x'|^2+x_n^2}\right)^{n-1}.
		\end{align*}
		Then
		\[
		\mathscr{P}_{x_n}(x')=x_n^{-(n-1)}\mathscr{P}_{1}(x'/x_n).
		\]
		Since $\mathscr{P}_{1}\in W^{1,1}(\R^{n-1})$, by scaling we also have
		\[
		\|\nabla \mathscr{P}_{x_n}\|_{L^1(\R^{n-1})}
		=x_n^{-1}\|\nabla \mathscr{P}_{1}\|_{L^1(\R^{n-1})}
		\le \frac{C_1}{x_n}.
		\]
		Therefore, for every $h\in \R^{n-1}$,
		\begin{align}\label{Sec 3 P-ine}
			\|\mathscr{P}_{x_n}(\cdot+h)-\mathscr{P}_{x_n}\|_{L^1(\R^{n-1})}
			\le
			\min\Bigl\{2\|\mathscr{P}_{x_n}\|_{L^1},\,|h|\,\|\nabla \mathscr{P}_{x_n}\|_{L^1}\Bigr\}
			\le C\min\left\{1,\frac{|h|}{x_n}\right\}. 
		\end{align}
		We shall use this estimate repeatedly below. We first claim that $T_{\sigma}$ is well defined. Indeed, since
		\[
		\bigl((1+y_n)^2+|y'|^2\bigr)^\alpha\ge (1+y_n)^{2\alpha},
		\]
		we obtain
		\begin{align*}
			|T_\sigma(v)(x')|
			&\le
			\|v\|_{L^\infty(\R^n_+)}
			\int_0^\infty\int_{\R^{n-1}}
			\frac{y_n^{\theta}}{\bigl((1+y_n)^2+|y'|^2\bigr)^\alpha}
			\mathscr{P}_{y_n}(x'-y')\,\ud y'\ud y_n\\
			&\le
			\|v\|_{L^\infty(\R^n_+)}
			\int_0^\infty
			\frac{y_n^{\theta}}{(1+y_n)^{2\alpha}}
			\left(\int_{\R^{n-1}}\mathscr{P}_{y_n}(x'-y')\,\ud y'\right)\ud y_n\\
			&=
			C_0\|v\|_{L^\infty(\R^n_+)}
			\int_0^\infty \frac{y_n^{\theta}}{(1+y_n)^{2\alpha}}\,\ud y_n.
		\end{align*}
		Now,
		\[
		\theta=\frac{2n(1-\sigma)}{2\sigma-1}>-1
		\]
		because $\sigma<\frac{2n-1}{2(n-1)}$, while
		\[
		\theta-2\alpha
		=
		\frac{2n(1-\sigma)}{2\sigma-1}-\frac{2n}{2\sigma-1}
		=
		-\frac{2n\sigma}{2\sigma-1}
		<
		-1
		\]
		since $\sigma>\frac12$. Hence,
		\[
		\int_0^\infty \frac{y_n^{\theta}}{(1+y_n)^{2\alpha}}\,\ud y_n<\infty,
		\]
		and therefore $T_\sigma(v)$ is well defined and bounded on $\R^{n-1}$.

		\begin{pro}\label{prop:Tsigma_holder}
			Let $n\ge 2$ and $
			\theta:=\frac{2n(1-\sigma)}{2\sigma-1}>-1,
			$  where $\sigma\in \left(\frac12,\frac{2n-1}{2(n-1)}\right)$. Let $T_{\sigma}$ be defined by \eqref{Sec 4 T}, then 
			$T_\sigma(v)$ is well-defined on $\R^{n-1}$ and
			there exists a constant $C=C(n,\sigma)>0$ such that, for all $x',z'\in \R^{n-1}$,
			\[
			|T_\sigma(v)(x')-T_\sigma(v)(z')|
			\le C\|v\|_{L^\infty(\R^n_+)}
			\begin{cases}
				|x'-z'| & \theta>0,\\
				|x'-z'|\bigl(1+\bigl|\log |x'-z'|\bigr|\bigr), & \theta=0,\ |x'-z'|\le \frac12,\\[4pt]
				|x'-z'|^{\theta+1}, & -1<\theta<0.
			\end{cases}
			\]
			Moreover, if $\theta>0$, then
			\[
			|\nabla T_\sigma(v)(x')-\nabla T_\sigma(v)(z')|
			\le C\|v\|_{L^\infty(\R^n_+)}
			\begin{cases}
				|x'-z'| & \theta>1\\
				|x'-z'|\bigl(1+\bigl|\log |x'-z'|\bigr|\bigr), & \theta=1,\ |x'-z'|\le \frac12,\\[4pt]
				|x'-z'|^{\theta}, & 0<\theta<1.
			\end{cases}
			\]
		\end{pro}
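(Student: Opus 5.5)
We will estimate the difference $T_\sigma(v)(x')-T_\sigma(v)(z')$ by applying \eqref{Sec 3 P-ine} slice by slice in $y_n$. Set $h=x'-z'$ and $w(y):=v(y)y_n^{\theta}\bigl((1+y_n)^2+|y'|^2\bigr)^{-\alpha}$, so that $|w(y)|\le \|v\|_{L^\infty}\,y_n^{\theta}(1+y_n)^{-2\alpha}$. Then
\begin{align*}
T_\sigma(v)(x')-T_\sigma(v)(z')=\int_0^\infty\int_{\R^{n-1}} w(y',y_n)\bigl(\mathscr{P}_{y_n}(x'-y')-\mathscr{P}_{y_n}(z'-y')\bigr)\ud y'\ud y_n .
\end{align*}
For each fixed $y_n$, we bound $|w(\cdot,y_n)|$ by its supremum and use the $L^1$ translation estimate \eqref{Sec 3 P-ine}. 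This gives
\begin{align*}
|T_\sigma(v)(x')-T_\sigma(v)(z')|\le C\|v\|_{L^\infty}\int_0^\infty \frac{y_n^{\theta}}{(1+y_n)^{2\alpha}}\min\Bigl\{1,\frac{|h|}{y_n}\Bigr\}\ud y_n .
\end{align*}
It then remains to evaluate this one-dimensional integral. On $y_n\ge 1$ the integrand is at most $|h|\,y_n^{\theta-1-2\alpha}$, and this is integrable since $\theta-2\alpha<-1$. That region therefore contributes $O(|h|)$. On $(0,1)$ we split at $y_n=|h|$ when $|h|<1$. The region $(0,|h|)$ gives $\int_0^{|h|}y_n^\theta\,\ud y_n\sim|h|^{\theta+1}$. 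The region $(|h|,1)$ gives $|h|\int_{|h|}^1 y_n^{\theta-1}\,\ud y_n$, which is $O(|h|)$ if $\theta>0$, $|h|\,|\log|h||$ if $\theta=0$, and $O(|h|^{\theta+1})$ if $\theta<0$. For $|h|\ge1$ the boundedness of $T_\sigma(v)$ already established suffices in the cases $\theta\ge 0$. In the case $\theta<0$ the bound $|h|^{\theta+1}\ge 1$ is also trivially fine. This yields the three cases of the first estimate.

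For the gradient estimate with $\theta>0$, we first justify differentiating under the integral. We have $\nabla_{x'}\mathscr{P}_{y_n}(x'-y')$ with $\|\nabla\mathscr{P}_{y_n}\|_{L^1}\le C/y_n$, and $\int_0^\infty y_n^{\theta-1}(1+y_n)^{-2\alpha}\,\ud y_n<\infty$ exactly when $\theta>0$. So $\nabla T_\sigma(v)(x')=\int w(y)\nabla\mathscr{P}_{y_n}(x'-y')\,\ud y$ converges absolutely, and dominated convergence gives differentiability. Next we need the analogue of \eqref{Sec 3 P-ine} for $\nabla\mathscr{P}$. By the scaling $\nabla\mathscr{P}_{y_n}(x')=y_n^{-n}(\nabla\mathscr{P}_1)(x'/y_n)$ and $\nabla\mathscr{P}_1\in W^{1,1}$ (the second derivatives decay like $|x'|^{-2(n-1)-2}$, which is integrable), we get
\begin{align*}
\|\nabla\mathscr{P}_{y_n}(\cdot+h)-\nabla\mathscr{P}_{y_n}\|_{L^1}\le C\min\Bigl\{\frac1{y_n},\frac{|h|}{y_n^2}\Bigr\}.
\end{align*}
The same slicing then bounds the gradient difference by $\int_0^\infty y_n^{\theta-1}(1+y_n)^{-2\alpha}\min\{1,|h|/y_n\}\,\ud y_n$. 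This is the previous integral with $\theta$ replaced by $\theta-1$, and the same case analysis gives $|h|$, $|h|(1+|\log|h||)$, or $|h|^{\theta}$ according to whether $\theta>1$, $\theta=1$, or $0<\theta<1$.

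The main obstacle is the rigorous handling of the slicing, namely Fubini with a sign-changing kernel difference. This is settled by absolute integrability, because the bound $|w|\,|\Delta_h\mathscr{P}|$ is integrable by the computation above. Verifying $\nabla\mathscr{P}_1\in W^{1,1}(\R^{n-1})$ is routine. The only delicate point is the borderline logarithmic cases, where we restrict to $|h|\le \frac12$ as in the statement.
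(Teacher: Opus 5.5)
Your proposal is correct and is essentially the paper's proof. In both, the weight is bounded by $\|v\|_{L^\infty}y_n^{\theta}(1+y_n)^{-2\alpha}$, the $L^1$ translation estimate \eqref{Sec 3 P-ine} is applied slice by slice (with its analogue for $\nabla\mathscr{P}_{y_n}$ coming from the $y_n^{-2}$ scaling of $\|D^2\mathscr{P}_{y_n}\|_{L^1}$), and the resulting one-dimensional integral is split at $y_n=|h|$ and $y_n=1$. Your explicit treatment of $|h|\ge 1$ via the boundedness of $T_\sigma(v)$, and likewise of $\nabla T_\sigma(v)$ from the same integrability of $y_n^{\theta-1}(1+y_n)^{-2\alpha}$, fills in a case the paper leaves implicit, since it only treats $0<|h|\le 1$.
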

		
		\begin{pf}
			
			Let $h\in \R^{n-1}$. Then
			\begin{align*}
				T_\sigma(v)(x'+h)-T_\sigma(v)(x')
				&=
				\int_0^\infty\int_{\R^{n-1}}
				\frac{v(y',y_n)y_n^{\theta}}{\bigl((1+y_n)^2+|y'|^2\bigr)^\alpha}
				\Bigl[\mathscr{P}_{y_n}(x'+h-y')-\mathscr{P}_{y_n}(x'-y')\Bigr]
				\,\ud y'\ud y_n.
			\end{align*}
			Taking absolute values and arguing as above,
			\begin{align*}
				&|T_\sigma(v)(x'+h)-T_\sigma(v)(x')|\\
				&\le
				\|v\|_{L^\infty(\R^n_+)}
				\int_0^\infty
				\frac{y_n^{\theta}}{(1+y_n)^{2\alpha}}
				\left(
				\int_{\R^{n-1}}
				|\mathscr{P}_{y_n}(x'+h-y')-\mathscr{P}_{y_n}(x'-y')|\,\ud y'
				\right)\ud y_n.
			\end{align*}
			By the change of variables,
			\[
			\int_{\R^{n-1}}
			|\mathscr{P}_{y_n}(x'+h-y')-\mathscr{P}_{y_n}(x'-y')|\,\ud y'
			=
			\|\mathscr{P}_{y_n}(\cdot+h)-\mathscr{P}_{y_n}\|_{L^1(\R^{n-1})}.
			\]
			Thus \eqref{Sec 3 P-ine} yields
			\begin{align*}
				|T_\sigma(v)(x'+h)-T_\sigma(v)(x')|
				&\le
				C\|v\|_{L^\infty(\R^n_+)}
				\int_0^\infty
				\frac{y_n^{\theta}}{(1+y_n)^{2\alpha}}
				\min\left\{1,\frac{|h|}{y_n}\right\}\,\ud y_n.
			\end{align*}
			Set
			\[
			I(h):=
			\int_0^\infty
			\frac{y_n^{\theta}}{(1+y_n)^{2\alpha}}
			\min\left\{1,\frac{|h|}{y_n}\right\}\,\ud y_n.
			\]
			We estimate $I(h)$ for $0<|h|\le 1$. Splitting the integral into three parts,
			\[
			I(h)
			=
			\int_0^{|h|}\frac{y_n^{\theta}}{(1+y_n)^{2\alpha}}\,\ud y_n
			+
			|h|\int_{|h|}^{1}\frac{y_n^{\theta-1}}{(1+y_n)^{2\alpha}}\,\ud y_n
			+
			|h|\int_{1}^{\infty}\frac{y_n^{\theta-1}}{(1+y_n)^{2\alpha}}\,\ud y_n.
			\]
			Since $(1+y_n)^{-2\alpha}\sim 1$ on $(0,1)$ and
			\[
			\theta-1-2\alpha
			=
			-\frac{2n\sigma}{2\sigma-1}-1<-1,
			\]
			the third integral is bounded by $C|h|$. Hence it remains to estimate the first two terms.
			
			\smallskip
			\noindent\emph{Case 1: $\theta>0$.}
			Then
			\[
			\int_0^{|h|} y_n^{\theta}\,\ud y_n\le C|h|^{\theta+1}\le C|h|,
			\qquad
			|h|\int_{|h|}^{1} y_n^{\theta-1}\,\ud y_n\le C|h|.
			\]
			Therefore
			\[
			I(h)\le C|h|,
			\]
			and so
			\[
			|T_\sigma(v)(x'+h)-T_\sigma(v)(x')|
			\le C\|v\|_{L^\infty(\R^n_+)}|h|.
			\]
			
			Moreover, in this case one can improve the above estimate to $C^{1,\alpha}$ regularity.
			Indeed, since
			\[
			\|\nabla_{x'}\mathscr{P}_{y_n}\|_{L^1(\R^{n-1})}
			=
			y_n^{-1}\|\nabla \mathscr{P}_{1}\|_{L^1(\R^{n-1})}
			\le C y_n^{-1},
			\]
			we have
			\[
			\int_0^\infty \frac{y_n^\theta}{(1+y_n)^{2\alpha}}
			\|\nabla_{x'}\mathscr{P}_{y_n}\|_{L^1(\R^{n-1})}\,\ud y_n
			\le
			C\int_0^\infty \frac{y_n^{\theta-1}}{(1+y_n)^{2\alpha}}\,\ud y_n<\infty,
			\]
			and therefore we may differentiate under the integral sign to obtain
			\[
			\nabla T_\sigma(v)(x')
			=
			\int_0^\infty\int_{\R^{n-1}}
			\frac{v(y',y_n)y_n^{\theta}}{\bigl((1+y_n)^2+|y'|^2\bigr)^\alpha}
			\nabla_{x'}\mathscr{P}_{y_n}(x'-y')
			\,\ud y'\ud y_n.
			\]
			Next, by scaling,
			\[
			\|D^2_{x'}\mathscr{P}_{y_n}\|_{L^1(\R^{n-1})}
			=
			y_n^{-2}\|D^2\mathscr{P}_{1}\|_{L^1(\R^{n-1})}
			\le C y_n^{-2}.
			\]
			Hence
			\begin{align*}
				\|\nabla \mathscr{P}_{y_n}(\cdot+h)-\nabla \mathscr{P}_{y_n}\|_{L^1(\R^{n-1})}
				&\le 2\|\nabla \mathscr{P}_{y_n}\|_{L^1(\R^{n-1})}
				\le C y_n^{-1},\\
				\|\nabla \mathscr{P}_{y_n}(\cdot+h)-\nabla \mathscr{P}_{y_n}\|_{L^1(\R^{n-1})}
				&\le |h|\,\|D^2 \mathscr{P}_{y_n}\|_{L^1(\R^{n-1})}
				\le C|h|\,y_n^{-2}.
			\end{align*}
			Combining the above two bounds, we get
			\[
			\|\nabla \mathscr{P}_{y_n}(\cdot+h)-\nabla \mathscr{P}_{y_n}\|_{L^1(\R^{n-1})}
			\le C y_n^{-1}\min\left\{1,\frac{|h|}{y_n}\right\}.
			\]
			Therefore,
			\begin{align*}
				&|\nabla T_\sigma(v)(x'+h)-\nabla T_\sigma(v)(x')|\\
				&\le
				\|v\|_{L^\infty(\R^n_+)}
				\int_0^\infty
				\frac{y_n^\theta}{(1+y_n)^{2\alpha}}
				\|\nabla \mathscr{P}_{y_n}(\cdot+h)-\nabla \mathscr{P}_{y_n}\|_{L^1(\R^{n-1})}
				\,\ud y_n\\
				&\le
				C\|v\|_{L^\infty(\R^n_+)}
				\int_0^\infty
				\frac{y_n^{\theta-1}}{(1+y_n)^{2\alpha}}
				\min\left\{1,\frac{|h|}{y_n}\right\}\,\ud y_n.
			\end{align*}
			Set
			\[
			J(h):=
			\int_0^\infty
			\frac{y_n^{\theta-1}}{(1+y_n)^{2\alpha}}
			\min\left\{1,\frac{|h|}{y_n}\right\}\,\ud y_n.
			\]
			As above, for $0<|h|\le 1$ we split
			\[
			J(h)
			=
			\int_0^{|h|}\frac{y_n^{\theta-1}}{(1+y_n)^{2\alpha}}\,\ud y_n
			+
			|h|\int_{|h|}^{1}\frac{y_n^{\theta-2}}{(1+y_n)^{2\alpha}}\,\ud y_n
			+
			|h|\int_{1}^{\infty}\frac{y_n^{\theta-2}}{(1+y_n)^{2\alpha}}\,\ud y_n.
			\]
			Since
			\[
			\theta-2-2\alpha
			=
			-\frac{2n\sigma}{2\sigma-1}-2<-1,
			\]
			the third integral is bounded by $C|h|$. For the first two terms, we distinguish three subcases.
			
			If $0<\theta<1$, then
			\[
			\int_0^{|h|} y_n^{\theta-1}\,\ud y_n\le C|h|^{\theta},
			\qquad
			|h|\int_{|h|}^{1} y_n^{\theta-2}\,\ud y_n\le C|h|^{\theta}.
			\]
			Hence
			\[
			J(h)\le C|h|^\theta.
			\]
			
			If $\theta=1$, then
			\[
			\int_0^{|h|}1\,\ud y_n=|h|,
			\qquad
			|h|\int_{|h|}^{1}\frac{1}{y_n}\,\ud y_n
			=
			|h|\log\frac1{|h|},
			\]
			and therefore
			\[
			J(h)\le C|h|\Bigl(1+\log\frac1{|h|}\Bigr).
			\]
			
			If $\theta>1$, then
			\[
			\int_0^{|h|} y_n^{\theta-1}\,\ud y_n\le C|h|^\theta\le C|h|,
			\qquad
			|h|\int_{|h|}^{1} y_n^{\theta-2}\,\ud y_n\le C|h|.
			\]
			Hence
			\[
			J(h)\le C|h|.
			\]
			
			Consequently,
			\[
			|\nabla T_\sigma(v)(x'+h)-\nabla T_\sigma(v)(x')|
			\le
			C\|v\|_{L^\infty(\R^n_+)}
			\begin{cases}
				|h|^\theta, & 0<\theta<1,\\[4pt]
				|h|\bigl(1+\bigl|\log|h|\bigr|\bigr), & \theta=1,\ |h|\le \frac12,\\[4pt]
				|h|, & \theta>1.
			\end{cases}
			\]
			Thus
			\[
			T_\sigma(v)\in C^{1,\alpha'}_{\loc}(\R^{n-1})
			\qquad \text{for every } \alpha'\in (0,\min\{1,\theta\}).
			\]
			
			\smallskip
			\noindent\emph{Case 2: $\theta=0$.}
			In this case,
			\[
			\int_0^{|h|}1\,\ud y_n=|h|,
			\qquad
			|h|\int_{|h|}^{1}\frac{1}{y_n}\,\ud y_n
			=
			|h|\log\frac1{|h|}.
			\]
			Hence
			\[
			I(h)\le C|h|\Bigl(1+\log \frac1{|h|}\Bigr),
			\]
			which implies
			\[
			|T_\sigma(v)(x'+h)-T_\sigma(v)(x')|
			\le
			C\|v\|_{L^\infty(\R^n_+)}
			|h|\Bigl(1+\log \frac1{|h|}\Bigr).
			\]
			Thus $T_\sigma(v)$ is log-Lipschitz, and in particular belongs to $C^{0,\alpha}_{\loc}$ for every $\alpha\in(0,1)$.
			
			\smallskip
			\noindent\emph{Case 3: $-1<\theta<0$.}
			Then
			\[
			\int_0^{|h|} y_n^{\theta}\,\ud y_n\le C|h|^{\theta+1},
			\qquad
			|h|\int_{|h|}^{1} y_n^{\theta-1}\,\ud y_n\le C|h|^{\theta+1}.
			\]
			Since $0<\theta+1<1$ and $0<|h|\le 1$, we also have $|h|\le |h|^{\theta+1}$. Therefore
			\[
			I(h)\le C|h|^{\theta+1},
			\]
			hence
			\[
			|T_\sigma(v)(x'+h)-T_\sigma(v)(x')|
			\le
			C\|v\|_{L^\infty(\R^n_+)}|h|^{\theta+1}.
			\]
			
			Combining the above three cases, we conclude that
			\[
			T_\sigma(v)\in C^{1,\alpha'}_{\loc}(\R^{n-1})
			\qquad \text{for every } \alpha'\in (0,\min\{1,\theta\})
			\quad \text{if } \theta>0,
			\]
			while
			\[
			T_\sigma(v)\in C^{0,\alpha'}_{\loc}(\R^{n-1})
			\quad\text{for every}\quad
			0<\alpha'<\min\{1,\theta+1\}
			\]
			if $-1<\theta\le 0$. Since
			\[
			\theta+1
			=
			\frac{2n-1-2(n-1)\sigma}{2\sigma-1}>0,
			\]
			the proof is complete.
		\end{pf}

		If $f\in L^{\infty}(\S^{n-1})$ is the extremal function, and recall 
		\begin{align}\label{Sec u-def}
			u(x')=f\circ \mathcal{S}(x',0)+(n-1)f_0, \qquad\mathrm{where}\qquad f_0(x')= \log\frac{2}{1+|x'|^2},
		\end{align}
		by \eqref{Sec 4 E-L equ} and Proposition \ref{prop:Tsigma_holder}, it follows that $u\in C^{\alpha'}_{\loc}(\mathbb R^{n-1})$ and so $f\in C_{\loc}^{\alpha'}(\S^{n-1}\backslash \{-\mathbf{e}_{n}\})$ for some $\alpha'\in(0,1)$. But choosing another projection point (for example $\mathbf{e}_{n} $), then it follows that $f\in C^{\alpha'}(\S^{n-1})$. From here and the property of the hyperbolic harmonic extension $P(f)$, we get $P(f)\in C^{\alpha'}(\overline{\B^n})$. Since  $P(f)\circ \mathcal{S}=\mathcal{P}(f\circ \mathcal{S})$, it follows that $\mathcal{P}(f\circ \mathcal{S})\in C^{\alpha'}(\overline{\R^{n}_{+}})$, therefore it follows that 
		\begin{align}\label{Sec 4 C alpha}
			e^{m_{\sigma}\mathcal{P}(f\circ \mathcal{S})+2m_{\sigma}(n-1)( I(|\xi|^2)-I\left(1\right) )\circ \mathcal{S}}\in C_{\loc}^{\alpha'}(\overline{\R_{+} ^{n}})
		\end{align}

		\begin{pro}\label{prop:local_gain}
			With the same assumption as Proposition \ref{prop:Tsigma_holder}, 
			if  \(v\in C_{\loc}^{\alpha'}(\overline{\R^n_+})\cap L^\infty(\R^n_+)\), for $\theta\in (-1,0]$, then the following statements hold.
			\begin{itemize}
				\item If \(0<\alpha'<1\) and \(\alpha'+\theta+1<1\), then
				\[
				T_\sigma(v)\in C^{\alpha'+\theta+1}_{\loc}(\R^{n-1}).
				\]
				
				\item If \(0<\alpha'<1\) and \(1<\alpha'+\theta+1<2\), then
				\[
				T_\sigma(v)\in C^{1,\alpha'+\theta}_{\loc}(\R^{n-1}).
				\]
			\end{itemize}
		\end{pro}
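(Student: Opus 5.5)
The plan is to exploit the H\"older continuity of $v$ to subtract a locally frozen value and gain regularity. For a fixed $x'$ and small $h$, I would write
\[
T_\sigma(v)(x'+h)-T_\sigma(v)(x')=\int_0^\infty\!\!\int_{\R^{n-1}} w(y)\,y_n^{\theta}\bigl[\mathscr{P}_{y_n}(x'+h-y')-\mathscr{P}_{y_n}(x'-y')\bigr]\ud y'\ud y_n ,
\]
where $w(y)=v(y)\bigl((1+y_n)^2+|y'|^2\bigr)^{-\alpha}$; on compact sets $w$ inherits $C^{\alpha'}_{\loc}$ from $v$. The key observation is that $\int_{\R^{n-1}}\bigl[\mathscr{P}_{y_n}(x'+h-y')-\mathscr{P}_{y_n}(x'-y')\bigr]\ud y'=0$ for each $y_n$. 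Hence, in the region $y_n\le 1$, I may replace $w(y',y_n)$ by $w(y',y_n)-w(x',0)$ and gain the factor $|w(y)-w(x',0)|\lesssim (|y'-x'|+y_n)^{\alpha'}$, with the large-$|y'|$ part controlled by boundedness of $w$. The region $y_n\ge1$ is harmless: by \eqref{Sec 3 P-ine} it contributes $C|h|$, exactly as in Proposition \ref{prop:Tsigma_holder}.

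For the near region I would use the pointwise kernel bounds $|\mathscr{P}_{t}(z)|\lesssim t^{n-1}(|z|+t)^{-2(n-1)}$ and $|\nabla\mathscr{P}_t(z)|\lesssim t^{n-1}(|z|+t)^{-2n+1}$. These give the weighted $L^1$ estimate
\[
\int_{\R^{n-1}}(|z|+t)^{\alpha'}\,|\mathscr{P}_t(z+h)-\mathscr{P}_t(z)|\,\ud z\lesssim \min\{1,|h|/t\}\,(t+|h|)^{\alpha'},
\]
which I would check by splitting $|z|\le 2|h|$ from $|z|>2|h|$. Integrating against $y_n^\theta\,\ud y_n$ over $(0,1)$ splits as before into
\[
\int_0^{|h|}y_n^{\theta}|h|^{\alpha'}\ud y_n+|h|\int_{|h|}^1 y_n^{\theta-1+\alpha'}\ud y_n .
\]
When $\alpha'+\theta<0$ (the first bullet), this is $\lesssim |h|^{\alpha'+\theta+1}$, which proves $T_\sigma(v)\in C^{\alpha'+\theta+1}_{\loc}$.

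For the second bullet, with $\alpha'+\theta>0$, I would first show that $\nabla T_\sigma(v)$ exists. The formal derivative $\int w\,y_n^\theta\nabla\mathscr{P}_{y_n}$ converges after the same subtraction, since $\int\nabla\mathscr{P}_{y_n}=0$ and the weighted bound gives $\lesssim y_n^{\theta-1+\alpha'}$, which is integrable near $0$. I would justify differentiation under the integral through truncation $y_n>\varepsilon$ and uniform convergence. Then I would repeat the difference estimate for $\nabla\mathscr{P}_t$, using the second-derivative kernel bound $\lesssim t^{n-1}(|z|+t)^{-2n}$. This yields
\[
|\nabla T_\sigma(v)(x'+h)-\nabla T_\sigma(v)(x')|\lesssim \int_0^{|h|}y_n^{\theta-1+\alpha'}\ud y_n+|h|\int_{|h|}^1y_n^{\theta-2+\alpha'}\ud y_n\lesssim |h|^{\alpha'+\theta},
\]
because $0<\alpha'+\theta<1$.

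The main obstacle is the bookkeeping in the subtraction step. The freezing point must be $(x',0)$ for both the $x'$ and the $x'+h$ terms so that the cancellation is exact. The tail $|y'-x'|\gtrsim1$ must be handled separately using the decay of $\mathscr{P}_{y_n}$ there, because $w$ is only locally H\"older. Both points look routine but need care to keep the constants local and uniform in $x'$ on compact sets.
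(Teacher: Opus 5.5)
Your route differs from the paper's, and it works apart from one step in the gradient estimate, discussed below.

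\textbf{Comparison with the paper.} The paper localizes with a cutoff $\chi$ and freezes only in the vertical variable: $\chi vb=\chi\bigl(vb-v(y',0)b(y',0)\bigr)+\chi\, v(y',0)b(y',0)$.
\begin{itemize}
\item The first piece is $O(y_n^{\alpha'})$, so it is handled by Proposition~\ref{prop:Tsigma_holder} again, with $\theta$ replaced by $\theta+\alpha'$.
\item The second piece is a compactly supported $C^{\alpha'}$ boundary function convolved with $J_R(z)=\int_0^{2R}\chi_2(y_n)y_n^{\theta}\mathscr{P}_{y_n}(z)\,\ud y_n=c_{n,\theta}|z|^{\theta+2-n}+H_R(z)$, with a logarithm when $n=2$, $\theta=0$. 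This is a Riesz potential of order $\theta+1$ on $\R^{n-1}$, and the paper quotes the classical H\"older mapping properties.
\end{itemize}
You instead freeze at the boundary point $(x',0)$ in all variables. You then use the mean-zero property of the kernel differences together with pointwise bounds on $\mathscr{P}_t$, $\nabla\mathscr{P}_t$ and $D^2\mathscr{P}_t$. Your argument is self-contained and uniform in $n\ge2$, $\theta\in(-1,0]$: there is no separate logarithmic case and no appeal to potential theory, since you essentially reprove the Riesz-potential Schauder estimate. The paper's argument is shorter because it reuses Proposition~\ref{prop:Tsigma_holder} and standard results.

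\textbf{The step that fails as written.} The problem is the H\"older estimate for $\nabla T_\sigma(v)$ in the range $y_n<|h|$. Your insistence on a common freezing point $(x',0)$ is correct for the first bullet: there $\int\mathscr{P}_{y_n}\neq0$, and $\int_0^{|h|}y_n^{\theta}|h|^{\alpha'}\,\ud y_n$ is finite. For the gradient it is harmful.
\begin{itemize}
\item The term $\nabla\mathscr{P}_{y_n}(x'+h-\cdot)$ concentrates near $y'=x'+h$, where $|w(y)-w(x',0)|\sim|h|^{\alpha'}$.
\item So the weighted $L^1$ bound is only $y_n^{-1}|h|^{\alpha'}$, and this is sharp.
\item Since $\theta\le0$, $\int_0^{|h|}y_n^{\theta-1}|h|^{\alpha'}\,\ud y_n=\infty$.
\end{itemize}
Hence the integrand $y_n^{\theta-1+\alpha'}$ in your final display does not follow from ``repeating the difference estimate''.

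\textbf{The repair.} Do what you already do for the existence of the derivative.
\begin{itemize}
\item For $y_n<|h|$, do not take differences of kernels. Estimate the two gradient integrals separately: subtract $w(x'+h,0)$ from the one centred at $x'+h$, and $w(x',0)$ from the one centred at $x'$. This is legitimate because $\int\nabla\mathscr{P}_{y_n}=0$. Each integral is then $\lesssim y_n^{\alpha'-1}$, which gives $\int_0^{|h|}y_n^{\theta+\alpha'-1}\,\ud y_n\lesssim|h|^{\alpha'+\theta}$.
\item Keep the common centre and the $D^2\mathscr{P}$ bound only for $|h|\le y_n\le1$. This gives $|h|\int_{|h|}^1y_n^{\theta+\alpha'-2}\,\ud y_n\lesssim|h|^{\alpha'+\theta}$.
\end{itemize}
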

		
		\begin{pf}
			Fix \(R>0\), and let \(x'\in B'_R\subset \R^{n-1}\). Choose
			\[
			\chi(y',y_n)=\chi_1(y')\chi_2(y_n)\in C_c^\infty(\R^{n-1}\times[0,\infty))
			\]
			such that
			\[
			\chi_1\equiv 1 \ \text{on } B'_{3R/2}, \qquad \supp \chi_1\subset B'_{2R},
			\]
			and
			\[
			\chi_2\equiv 1 \ \text{on } [0,3R/2], \qquad \supp \chi_2\subset [0,2R].
			\]
			Write
			\[
			T_\sigma(v)=T_{\rm loc}(v)+T_{\rm far}(v),
			\]
			where
			\begin{align*}
				T_{\rm loc}(v)(x')
				&:=
				\int_0^\infty\int_{\R^{n-1}}
				\frac{\chi(y',y_n)\,v(y',y_n)\,  y_n^{\theta}}{\bigl((1+y_n)^2+|y'|^2\bigr)^\alpha}
				\left(\frac{y_n}{|x'-y'|^2+y_n^2}\right)^{n-1}
				\,\ud y'\ud y_n,
			\end{align*}
			and \(T_{\rm far}(v):=T_\sigma(v)-T_{\rm loc}(v)\).
			
			Since on the support of \(1-\chi\) one has either \(y_n\ge 3R/2\) or \(|y'|\ge 3R/2\), it follows that
			\[
			|x'-y'|+y_n\ge c_R>0
			\qquad \text{for all } x'\in B'_R .
			\]
			Hence the kernel of \(T_{\rm far}(v)\) is smooth in \(x'\), and differentiation under the integral sign gives
			\[
			T_{\rm far}(v)\in C^\infty(B'_R).
			\]
			
			It remains to study \(T_{\rm loc}(v)\). Set
			\[
			b(y',y_n):=\frac{1}{\bigl((1+y_n)^2+|y'|^2\bigr)^\alpha},
			\qquad
			\mathscr{P}_{y_n}(z):=\left(\frac{y_n}{|z|^2+y_n^2}\right)^{n-1}.
			\]
			Then
			\[
			T_{\rm loc}(v)(x')
			=
			\int_0^{2R}\int_{\R^{n-1}}
			\chi(y',y_n)\,v(y',y_n)\,b(y',y_n)\,y_n^{\theta}\mathscr{P}_{y_n}(x'-y')\,\ud y'\ud y_n.
			\]
			
			We decompose
			\[
			\chi(y',y_n)\,v(y',y_n)\,b(y',y_n)
			=
			R_1(x',y',y_n)+R_2(x',y',y_n),
			\]
			where
			\[
			R_1(y',y_n):=\chi(y',y_n)\bigl(v(y',y_n)b(y',y_n)-v(y',0)b(y',0)\bigr),
			\]
			and
			\[
			R_2(y',y_n):=\chi(y',y_n)v(y',0)b(y',0).
			\]
			Accordingly,
			\[
			T_{\rm loc}(v)=E_1+E_2,
			\]
			with
			\begin{align*}
				E_1(x')
				&:=
				\int_0^{2R}\int_{\R^{n-1}}
				R_1(y',y_n)\,y_n^{\theta}\mathscr{P}_{y_n}(x'-y')\,\ud y'\ud y_n,\\
				E_2(x')
				&:=
				\int_0^{2R}\int_{\R^{n-1}}
				R_2(y',y_n)\,y_n^{\theta}\mathscr{P}_{y_n}(x'-y')\,\ud y'\ud y_n.
			\end{align*}

			\medskip
			\noindent\emph{Step 1: the error term \(E_1\).}
			Because \(v\in C^{\alpha'}(\overline{\R^n_+})\), we have
			\[
			|v(y',y_n)b(y',y_n)-v(y',0)b(y',0)|\le C_R y_n^{\alpha'}
			\qquad \text{on } \supp \chi .
			\]
			Since \(b\) is bounded on \(\supp \chi\), it follows that
			\[
			|R_1(y',y_n)|\le C_R y_n^{\alpha'} .
			\]
			Hence \(E_1\) has the same form as \(T_\sigma\), but with \(y_n^{\theta}\) replaced by \(y_n^{\alpha'+\theta}\). Consequently its kernel is of order \(\alpha'+\theta+1\), and the same argument as Proposition \ref{prop:Tsigma_holder} gives
			\[
			E_1\in
			\begin{cases}
				C^{\alpha'+\theta+1}(B'_R), & \alpha'+\theta+1<1,\\[3pt]
				C^{1,\alpha'+\theta}(B'_R), & 1<\alpha'+\theta+1<2.
			\end{cases}
			\]

			\medskip
			\noindent\emph{Step 2: the principal term \(E_2\).}
			Define
			\[
			J_R(z):=\int_0^{2R} \chi_2(y_n)y_n^{\theta}\mathscr{P}_{y_n}(z)\,\ud y_n
			=\int_0^{2R}\frac{y_n^{n+\theta-1}\chi_2(y_n)}{(|z|^2+y_n^2)^{n-1}}\,\ud y_n.
			\]
			If \(z\neq 0\), the change of variables \(y_n=|z|\tau\) yields
			\[
			J_R(z)
			=
			|z|^{\theta+2-n}\int_0^{2R/|z|}
			\frac{\tau^{n+\theta-1}\chi_2(|z|\tau)}{(1+\tau^2)^{n-1}}\,\ud \tau.
			\]
			Hence
			\[
			J_R(z)=\begin{cases}
				c_{n,\theta}|z|^{\theta+2-n}+H_R(z)& n\not=2, \theta\in (-1,0], n=2, \theta\in (-1,0)\\
				c_{2,0}\log|z|+H_R(z)&n=2,\theta=0,
			\end{cases}
			\]
			where \(c_{n,\theta}>0\) and \(H_R\in C^\infty(B'_{4R})\). Therefore, set
			\[
			f(y'):=\chi_1(y')v(y',0)b(y',0)\in C^{\alpha'}_c(\R^{n-1}),
			\]
			If $n\not=2, \theta\in (-1,0]$ and $n=2, \theta\in (-1,0)$, we may write
			\[
			E_2(x')=\,c_{n,\theta}\int_{\R^{n-1}}\frac{f(y')}{|x'-y'|^{n-2-\theta}}\,\ud y'
			+\int_{\R^{n-1}}f(y')H_R(x'-y')\,\ud y'.
			\]
			The second term is smooth. For the first one, the classical Schauder estimate for the Riesz potential implies
			\[
			I_\theta(f)(x'):=\int_{\R^{n-1}}\frac{f(y')}{|x'-y'|^{n-2-\theta}}\,\ud y'
			\in
			\begin{cases}
				C^{\alpha'+\theta+1}(B'_R), & \alpha'+\theta+1<1,\\[3pt]
				C^{1,\alpha'+\theta}(B'_R), & 1<\alpha'+\theta+1<2.
			\end{cases}
			\]
			For $n=2$ and $\theta=0$, the same arguments are still valid.

			Combining the above decomposition,
			we conclude that
			\[
			T_\sigma(v)\in
			\begin{cases}
				C^{\alpha'+\theta+1}(B'_R), & \alpha'+\theta+1<1,\\[3pt]
				C^{1,\alpha'+\theta}(B'_R), & 1<\alpha'+\theta+1<2.
			\end{cases}
			\]
			Since \(R>0\) is arbitrary, the desired local statement follows.
		\end{pf}

		\begin{thm}\label{cor:bootstrap_C1}
			Assume that $f$ is an extremal function and let $u$ be defined by \eqref{Sec u-def}, then $u$ satisfies the integral equation
			\begin{align}\label{Sec 4 E-L equ-1}
				e^{u(x')}=c'_{\sigma}\int_{\R^{n}_{+}}e^{m_{\sigma}\mathcal{P}(u)(y)}y_n^{\frac{2n(1-\sigma)}{2\sigma-1}}\mathscr{P}(x'-y',y_n)\ud y    \end{align}
			and $u\in C^1_{\loc}(\R^{n-1})$.
		\end{thm}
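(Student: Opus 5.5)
The plan has two parts: derive the Euler--Lagrange equation \eqref{Sec 4 E-L equ-1}, then bootstrap regularity using Propositions \ref{prop:Tsigma_holder} and \ref{prop:local_gain}.

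\emph{Step 1: the Euler--Lagrange equation.} Take $f\in L^\infty(\S^{n-1})$ extremal for \eqref{Intro Inequ}. Perturb $f\mapsto f+\epsilon\varphi$ with $\varphi\in L^\infty(\S^{n-1})$. Since $f$ is bounded, $P(f+\epsilon\varphi)=P(f)+\epsilon P(\varphi)$ with $|P(\varphi)|\le\|\varphi\|_\infty$, so dominated convergence lets us differentiate both sides of \eqref{Sec 2 Ineq} at $\epsilon=0$. Writing $A=\int_{\B^n}e^{m_\sigma P(f)}\ud\nu$, the vanishing of the first variation of $A^{1/m_\sigma}/\int e^f\ud\mu$ gives
\[
A^{\frac1{m_\sigma}-1}\int_{\B^n}e^{m_\sigma P(f)}P(\varphi)\ud\nu=\Big(\int_{\S^{n-1}}e^f\ud\mu\Big)^{-1}A^{\frac1{m_\sigma}}\int_{\S^{n-1}}e^f\varphi\,\ud\mu .
\]
Fubini (the kernel is nonnegative) moves $P$ onto the interior weight, so $e^{f(\eta)}=c\int_{\B^n}e^{m_\sigma P(f)(\xi)}\big(\tfrac{1-|\xi|^2}{|\xi-\eta|^2}\big)^{n-1}\ud\nu(\xi)$ for a.e.\ $\eta$. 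Transporting this identity by $\mathcal S$ exactly as in Lemma \ref{Sec 3 Equ Rn Lem}, using \eqref{Sec 4 equ-a}, \eqref{Sec 4 equ-b}, \eqref{P-P} and \eqref{Sec 4 Pf0}, produces \eqref{Sec 4 E-L equ-1} a.e. The Jacobian and conformal factors cancel just as in the computation for the inequality. We then redefine $u$ on a null set so that \eqref{Sec 4 E-L equ-1} holds everywhere.

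\emph{Step 2: regularity bootstrap.} The computation preceding \eqref{Sec 4 T} shows that \eqref{Sec 4 E-L equ-1} reads $e^{u}=c\,T_\sigma(v)$ with $v\in L^\infty(\R^n_+)$. It also gives $e^u$ a positive lower bound on compacts, since $e^f\ge e^{-\|f\|_\infty}$. We split according to $\theta=\frac{2n(1-\sigma)}{2\sigma-1}$.
\begin{itemize}
\item If $\theta>0$, Proposition \ref{prop:Tsigma_holder} already gives $T_\sigma(v)\in C^{1}$, hence $u=\log(cT_\sigma(v))\in C^1_{\loc}$.
\item If $-1<\theta\le0$, Proposition \ref{prop:Tsigma_holder} gives $u\in C^{\alpha_0}_{\loc}$ with $\alpha_0<\theta+1$. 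As in the text, applying this with both poles $\pm\mathbf e_n$ yields $f\in C^{\alpha_0}(\S^{n-1})$, then $P(f)\in C^{\alpha_0}(\overline{\B^n})$, and hence \eqref{Sec 4 C alpha} with exponent $\alpha_0$. Proposition \ref{prop:local_gain} then upgrades $T_\sigma(v)$, and so $u$, to $C^{\alpha_0+\theta+1}_{\loc}$. Iterating, the exponents $\alpha_k=\alpha_0+k(\theta+1)$ increase by the fixed amount $\theta+1>0$ at each step. After finitely many steps some $\alpha_k+\theta+1$ exceeds $1$, and the second bullet of Proposition \ref{prop:local_gain} gives $u\in C^{1,\cdot}_{\loc}$.
\end{itemize}

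The main obstacle in the iteration is the exceptional case $\alpha_k+\theta+1=1$ exactly, which Proposition \ref{prop:local_gain} does not cover. We avoid it by slightly decreasing $\alpha_0$ at the start, since Hölder exponents may always be lowered. We also need that $P(f)\in C^{\alpha}(\overline{\B^n})$ whenever $f\in C^\alpha(\S^{n-1})$ with $\alpha<1$. This is a standard Poisson-kernel estimate, as used before \eqref{Sec 4 C alpha}. It keeps the hypothesis $v\in C^{\alpha_k}_{\loc}(\overline{\R^n_+})$ of Proposition \ref{prop:local_gain} valid at every step, and it is the point we would verify most carefully.
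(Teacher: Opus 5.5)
Your proposal is correct and follows the paper's route. You rewrite the equation as $e^u=c\,T_\sigma(v)$ with $v\in L^\infty$, conclude directly from Proposition \ref{prop:Tsigma_holder} when $\theta>0$, and otherwise bootstrap with Proposition \ref{prop:local_gain} through $f\in C^{\alpha_k}(\S^{n-1})\Rightarrow P(f)\in C^{\alpha_k}(\overline{\B^n})$, avoiding the endpoint $\alpha_k+\theta+1=1$ just as the paper does by imposing $1-\alpha_0\notin(\theta+1)\N$. Your explicit first-variation derivation of \eqref{Sec 4 E-L equ-1} goes slightly beyond the paper, which only asserts the Euler--Lagrange equation by appeal to Lemma \ref{Sec 3 Equ Rn Lem}.
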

		
		\begin{pf}
			If $\theta>0$, by Proposition \ref{prop:Tsigma_holder}, the conclusion is valid. Hence, we can assume $\theta\in (-1,0]$. Choosing $1-\alpha_0\not\in  (\theta+1)\mathbb{N}$ and 
			\[
			0<\alpha_0<\min\left\{1,\frac{2n-1-2(n-1)\sigma}{2\sigma-1}\right\},
			\]
			then by Proposition \ref{prop:Tsigma_holder}, it follows that $u\in C^{\alpha_0}_{\loc}(\R^{n-1})$, using \eqref{Sec 4 C alpha}, it follows that $$e^{m_{\sigma}\mathcal{P}(f\circ \mathcal{S})+2m_{\sigma}(n-1)( I(|\xi|^2)-I\left(1\right) )\circ \mathcal{S}}\in C_{\loc}^{\alpha_0}(\overline{\R^{n}_{+}}) .$$ By Proposition \ref{prop:local_gain}, if $\alpha_0+\theta+1<1$, it follows that $u\in C^{\alpha_0+\theta+1}_{\loc}(\R^{n-1})$. (If $\alpha_0+\theta+1>1$, then we are done.)
			Repeatedly applying Proposition~\ref{prop:local_gain}, we obtain
			\[
			u\in C^{\alpha_0+m(\theta+1)}_{\loc}(\R^{n-1})
			\qquad \text{for every integer } m\ge 1 \text{ with } \alpha_0+m(\theta+1)<1.
			\]
			Choose \(m_0\in \N\) such that
			\[
			\alpha_0+(m_0+1)(\theta+1)>1>\alpha_0+m_0(\theta+1).
			\]
			Then one more application of Proposition~\ref{prop:local_gain} yields
			
			\[
			u\in C^{1,\alpha_0+(m_0+1)(\theta+1)-1}_{\loc}(\R^{n-1}).
			\]
			This proves the claim.
		\end{pf}

\noindent{\textbf{Acknowledgments.} The research of Z. Chen is supported by National Key R\&D Program of China (Grant 2023YFA1010002) and NSFC (No. 12222109).
	The research of C. Gui is supported by University of Macau research grants
	CPG2024-00016-FST, CPG202500032-FST, CPSRG20263-0002711-FST,  MYRG-GRG2023-
	00139-FST-UMDF, UMDF Professorial Fellowship of Mathematics, Macao SAR
	FDCT0003/2023/RIA1 and Macao SAR FDCT0024/2023/RIB1, NSFC No. 12531010. The research of S. Zhang was supported by the
	Postdoctoral Fellowship Program and China Postdoctoral Science Foundation (Grant BX20250062), as well as the Shui Mu Tsinghua Scholar Program.}   \\
            
\noindent{\textbf{AI Assistance Statement:}
The authors used AI tools only for language editing and for checking the correctness of some identities. All arguments, identities, estimates, and the final manuscript were independently verified and remain the sole responsibility of the authors.}
                
		\bibliographystyle{unsrt}

		\bigskip
	
\noindent Department of Mathematical Sciences, Yau Mathematical Sciences Center, Tsinghua University, Beijing, China \\[1mm]
		Email: \textsf{zjchen2016@tsinghua.edu.cn}	\\[3mm]

		\noindent Department of Mathematics, University of Macau,Taipa, Macau \\[1mm]
		Email: \textsf{changfenggui@um.edu.mo}	\\[3mm]	

\noindent Yau Mathematical Sciences Center, Tsinghua University, Beijing, China \\[1mm]
		Email: \textsf{ shihong-zhang@tsinghua.edu.cn.}
        
		\medskip  	
}		
	\end{document}